\def\BibTeX{{\rm B\kern-.05em{\sc i\kern-.025em b}\kern-.08em
    T\kern-.1667em\lower.7ex\hbox{E}\kern-.125emX}}
\newcommand*{\medcap}{\mathbin{\scalebox{0.8}{\ensuremath{\bigcap}}}}%
\newtheorem{theorem}{Theorem}
\newtheorem{remark}{Remark}
\newtheorem{definition}{Definition}
\newtheorem{assumption}{Assumption}
\newtheorem{proposition}{Proposition}
\newtheorem{corollary}{Corollary}
\begin{document}
\title{Approximate solution of stochastic infinite horizon optimal control problems for constrained linear uncertain systems}
\author{Eunhyek Joa, \IEEEmembership{Member, IEEE} and Francesco Borrelli, \IEEEmembership{Fellow, IEEE}
\thanks{The authors are with the Model Predictive Control Lab,
Department of Mechanical Engineering, University of California at Berkeley (e-mail: e.joa@berkeley.edu; fborrelli@berkeley.edu). }}

\maketitle

\noindent


\begin{abstract}
We propose a Model Predictive Control (MPC) with a single-step prediction horizon to approximate the solution of infinite horizon optimal control problems with the expected sum of convex stage costs for constrained linear uncertain systems.
The proposed method aims to enhance a given sub-optimal controller, leveraging data to achieve a nearly optimal solution for the infinite horizon problem.
The method is built on two techniques.
First, we estimate the expected values of the convex costs using a computationally tractable approximation, achieved by sampling across the space of disturbances.
Second, we implement a data-driven approach to approximate the optimal value function and its corresponding domain, through systematic exploration of the system's state space.
These estimates are subsequently used to calculate the terminal cost and terminal set within the proposed MPC.
We prove recursive feasibility, robust constraint satisfaction, and convergence in probability to the target set. 
Furthermore, we prove that the estimated value function converges to the optimal value function in a local region.
The effectiveness of the proposed MPC is illustrated with detailed numerical simulations and comparisons with a value iteration method and a Learning MPC that minimizes a certainty equivalent cost.
\end{abstract}

\begin{IEEEkeywords}
Data-driven, Learning, Optimal control, Predictive control.
\end{IEEEkeywords}

\section{Introduction}
\label{sec:introduction}

This paper proposes a computationally tractable approach to compute a solution to the following problem
\begin{equation} \label{eq: general format of problem}
\begin{aligned}
    &  Q^{\star}(x_S) = \min_{\pi(\cdot)} ~\,\mathbb{E}_{w_{0:\infty}}\Biggr[\sum_{k=0}^\infty \ell(x_k, ~ \pi(x_k)) \Biggr] \\
    & \qquad \qquad ~~ \textnormal{s.t.,} ~~ x_{k+1}  = A x_k + B \pi(x_k) + w_k, \\
    & \qquad \qquad  \qquad ~~\, x_0 = x_S, ~ x_k \in \mathcal{X}, ~ \pi(x_k) \in \mathcal{U},
\end{aligned}
\end{equation}
in a region $x_S$ of the state space.
In~(\ref{eq: general format of problem}), $w_k$ is an independently and identically distributed (i.i.d.) random variable with bounded support and a known probability distribution function. We assume the existence of a zero-cost terminal set $\mathcal{O}$; once the system reaches $\mathcal{O}$, it remains in there at no further cost. 
This infinite horizon optimal control problem (OCP) is similar to the \textit{Stochastic Shortest Path} problem in \cite[Chap. 3]{bertsekas2015dynamic} in having a cost-free terminal set, but differs because we deal with discrete-time linear uncertain systems with state and input constraints.

A classical way to solve this problem \eqref{eq: general format of problem} is Dynamic programming (DP) through value iteration \cite[Chap. 5.3.1]{bertsekas2012dynamic}, \cite[Chap. 3.4]{bertsekas2015dynamic},\cite[Chap. 4.4]{sutton2018reinforcement} or policy iteration \cite[Chap. 5.3.2]{bertsekas2012dynamic}, \cite[Chap. 3.5]{bertsekas2015dynamic}, \cite[Chap. 4.3]{sutton2018reinforcement}.
These methods involve gridding the state spaces to obtain the optimal value function and the associated DP policy globally, i.e., covering all possible states. 

Gridding suffers from the \textit{curse of dimensionality} \cite{bertsekas2012dynamic}. Approximate DP addresses this issue by approximating the value function and the policy \cite[Chap. 6]{bertsekas2012dynamic}.
For instance, Neural Networks (NN), which automatically extracts features from data, provide an alternative for approximating the value function \cite[Chap. 6.4.2]{bertsekas2012dynamic} and the policy \cite[Chap. 6.7]{bertsekas2012dynamic}.
However, when solving the constrained OCPs \eqref{eq: general format of problem} of safety-critical systems, training NNs are difficult \cite{li2018safe}. The resulting NNs require additional safety measures like safety filter \cite{wabersich2018linear}, which can result in conservative closed-loop performances.


Model Predictive Control (MPC) can be seen as a variation of approximate DP, where the terminal cost is an approximated value function \cite{bertsekas2022newton}. Researchers in \cite{cannon2012stochastic, farina2015approach, hewing2020recursively, lorenzen2016constraint} solve problem \eqref{eq: general format of problem} using MPC, assuming the stage cost is quadratic and that the associated probability distribution has known finite first and second moments. This allows them to reformulate the expected stage cost as a certainty equivalent cost and calculate the value function by solving the Riccati equation in a local region. However, without these assumptions, explicitly calculating the value function becomes challenging.
For deterministic systems, a data-driven approach to calculating a value function is presented in \cite{rosolia2017learning_journal}. 
In \cite{rosolia2017learning_journal}, from historical data, the value functions are computed and are utilized as the terminal cost and terminal set of the MPC, called Learning MPC (LMPC).
\cite{rosolia2017learning_journal} proves that this LMPC is an optimal solution for given infinite horizon OCPs for deterministic systems.

In this paper, we propose a novel data-driven MPC approach that provides an approximate solution to \eqref{eq: general format of problem} within a local region in the state space explored by the proposed algorithm.
In contrast to \cite{cannon2012stochastic, farina2015approach, hewing2020recursively, lorenzen2016constraint}, we consider a convex cost that is not necessarily reformulated as a certainty equivalent cost. Compared to \cite{rosolia2017learning_journal, rosolia2018stochastic, rosolia2021robust}, we consider the expected sum of stage costs instead of a certainty equivalent cost.


\begin{table*}[ht]
    \centering
    \scriptsize 
    \caption{Theoretical Comparison of Adaptive Control and Safe Reinforcement Learning (RL)}
    \renewcommand{\arraystretch}{1.3} 
    \setlength{\tabcolsep}{4pt} 
    \begin{tabular}{|>{\raggedright\arraybackslash}m{1.4cm}||%
                    >{\raggedright\arraybackslash}m{2.2cm}|%
                    >{\raggedright\arraybackslash}m{2.4cm}|%
                    >{\raggedright\arraybackslash}m{2.5cm}|%
                    >{\raggedright\arraybackslash}m{1.8cm}|%
                    >{\raggedright\arraybackslash}m{2.5cm}|%
                    >{\raggedright\arraybackslash}m{3.1cm}|}
        \hline
        \textbf{Methods} & \textbf{System Type} & \textbf{Constraints} & \textbf{Disturbance} & \textbf{Cost Function} & \textbf{Optimality} & \textbf{Stability} \\ \hline
        \vspace{0.2cm} \makecell[l]{BLF \cite{min2020adaptive,wu2019robust}} 
            & \makecell[l]{Nonlinear Uncertain,\\Unknown Parameters} 
            & \makecell[l]{Box on State \\ \cite{min2020adaptive}: Time-invariant \\ \cite{wu2019robust}: Time-varying \\ \& Input Saturation} 
            & \makecell[l]{\cite{min2020adaptive}: Wiener Process \\ \cite{wu2019robust}: Bounded with \\ Unknown Bound} 
            & \makecell[l]{\\ Not considered}
            & \makecell[l]{\\ Not considered}
            & \makecell[l]{\\ \cite{min2020adaptive}: Finite-time in Probability \\ \cite{wu2019robust}: Uniform Boundedness} \\ \hline
        Safe RL \cite{zanon2020safe} 
            & \makecell[l]{Affine,\\Unknown Parameters} 
            & \makecell[l]{Linear on \\ State \& Input} 
            & Not considered 
            & Quadratic 
            & \makecell[l]{No Guarantee, \\ RL for optimality.}
            & Asymptotic \\ \hline
        L-COCP \cite{agrawal2020learning} 
            & \makecell[l]{Affine,\\Unknown Parameters} 
            & \makecell[l]{Convex on \\ State \& Input} 
            & Not considered. 
            & Convex 
            & No Guarantee, Gradient descent for optimality. 
            & No Guarantee, as recursive feasibility is not ensured. \\ \hline
        Ours 
            & \makecell[l]{Linear Uncertain,\\Known Parameters} 
            & \makecell[l]{Linear on \\ State \& Input} 
            & \makecell[l]{Stochastic, Bounded,\\State Additive} 
            & \makecell[l]{Convex with \\ Assumptions} 
            & Near-Optimal Guarantee
            & \makecell[l]{Asymptotic in Probability} \\ \hline
    \end{tabular}
    \label{tab:comparison}
\end{table*}
In the context of data-driven control policy adaptation, our method is theoretically compared with adaptive control and safe reinforcement learning (RL), as summarized in Table \ref{tab:comparison}. Note that this review is not exhaustive due to page limit. The key distinction in leveraging data in this work is its focus on ensuring near-optimality rather than addressing unknown dynamics. Barrier Lyapunov Function (BLF)-based adaptive control \cite{min2020adaptive,wu2019robust} adjusts BLF parameters to maintain stability but only guarantees feasibility. Safe RL \cite{zanon2020safe} employs RMPC to learn both policy and model parameters but does not guarantee optimality. Learning Convex Optimization Control Policies (L-COCP) \cite{agrawal2020learning} tunes parameters using data but lacks stability guarantees for state-constrained problems due to the absence of recursive feasibility. In contrast, our method ensures both near-optimal performance and recursive feasibility, addressing limitations in existing approaches.

Our data-driven approach uses a forward iterative strategy, unlike DP, which works backward from a target set $\mathcal{O}$ to the initial state. 
Our method moves forward from the initial state to $\mathcal{O}$ and iteratively updates the value function and its domain. 
In particular, we collect data from each \textit{episode}, a task starting from an initial state and ending when it reaches a predefined target set $\mathcal{O}$. 
After each \textit{episode}, we explore areas in the state space that the system has not visited before, a process denoted as \textit{Exploration}. Using data from completed episodes and the \textit{Exploration}, we update the estimated value function and its domain. These updated values serve as the terminal cost and terminal set in the MPC for the next \textit{episode}.
Our contributions are summarized as:
\begin{itemize}
    \item  We formulate a one-step MPC which approximates \eqref{eq: general format of problem}. 
    \item We prove that the closed-loop system controlled by the proposed one-step MPC has the following properties: \textit{(i) convergence in probability:} the probability that the closed-loop system converges to the target set $\mathcal{O}$ approaches one as the time step goes to infinity, and \textit{(ii)} robustly satisfies state and input constraints.
    \item We prove that an estimated value function monotonically decreases as the number of episodes increases. Moreover, we prove that this estimated value function at $x_S$ converges to a neighborhood of the optimal value function of \eqref{eq: general format of problem} at $x_S$, where the radius of this neighborhood is proportional to the error in approximating the expectation.
    \item We propose a novel exploration method. Compared to the \cite[Algorithm 2]{bujarbaruah2022robust} where the farthest reachable states along certain predetermined or random directions are found, we calculate the direction of improving the performance and find the farthest reachable states along that direction.
    \item With numerical simulations, we compare our proposed approach with the LMPC \cite{rosolia2018stochastic} and the value iteration method in \cite{yang2020convex}. In terms of the expected total cost, our proposed approach outperforms the LMPC \cite{rosolia2018stochastic} by $13.75 \%$. Additionally, the estimated value function of our proposed method closely approximates the value function of the value iteration \cite{yang2020convex} in a local region, while the proposed method is 19 times faster than \cite{yang2020convex}.
\end{itemize}

\textit{Notation:} 
Throughout the paper, we use the following notation. 
The Minkowski sum of two sets is denoted as \(\mathcal{X} \oplus \mathcal{Y} = \{x+y: x \in \mathcal{X}, y \in \mathcal{Y}\}\).
The Pontryagin difference between two sets is defined as \(\mathcal{X} \ominus \mathcal{Y} = \{x\in\mathcal{X}: x+y \in \mathcal{X}, \forall y \in \mathcal{Y}\}\).
The m-th column vector of a matrix \(H\) is denoted as \([H]_m\). The m-th component of a vector \(h\) is \([h]_m\). 
\(\mathbb{P}(\mathcal{A})\) is the probability of the event \(\mathcal{A}\), and \(\mathbb{E}[\cdot]\) is the expectation of its argument. 
The notation \(x_{l:m}\) means the sequence of the variable \(x\) from time step \(l\) to time step \(m\).
$\mathrm{dim}(v)$ denotes the dimension of the vector $v$. $\mathrm{int}(\mathcal{S})$ denotes the interior of the set $\mathcal{S}$. $\mathrm{conv}\{\mathcal{S}\}$ denotes the convex hull of the set $\mathcal{S}$. 

\section{Problem setup}
\subsection{System Dynamics and Constraints}
We consider an uncertain linear time-invariant (LTI) system perturbed by a state additive, stochastic disturbance as follows:
\begin{equation} \label{eq:system}
\begin{split}
    & x_{k+1}^j  = A x_k^j  + B u_k^j  + w_k^j , ~ w_k^j \sim p(w), \\
    & x_0^j = x_S,
\end{split}
\end{equation}
where \(x_k^j  \in \mathbb{R}^{n_x}\) is the state, and \(u_k^j  \in \mathbb{R}^{n_u}\) is the input at time step $k$ of the episode $j$. System matrices \(A\) and \(B\) are known. 
At each time step \(k\) of the episode $j$, the system is affected by a random disturbance \(w_k^j  \in \mathbb{R}^{n_x}\) with known distribution $p(w)$. 
We use the following assumptions on the system \eqref{eq:system}.
\begin{assumption} \label{assum: bounded noise}
(Bounded Random Disturbance) We assume that the random disturbance $w_k^j$ is an i.i.d., zero-mean random variable with known distribution $p(w)$ and known support $\mathcal{W}$. The set $\mathcal{W}=\{w~|~H_w w \leq h_w\}$ is a nonempty polytope that contains the origin in its interior. 
Furthermore, $l_w$ is the number of vertices of $\mathcal{W}$, and $v_{i}^\mathcal{W}$ is the $i$-th vertex of the $\mathcal{W}$.
\end{assumption} 

System \eqref{eq:system} is subject to the following constraints:
\begin{equation} \label{eq:constraints}
\begin{split}
    & x_{k}^j \in \mathcal{X}, ~u_k^j \in \mathcal{U}, ~\forall w_k^j \in \mathcal{W},\\
    & \forall k \geq 0,~\forall j \geq 0,
\end{split}
\end{equation}
where \(\mathcal{X}=\{x\,|\, H_x x \leq h_x\}\) is a polyhedron, and \(\mathcal{U} = \{u \,|\,H_u u \leq h_u\}\) is a nonempty polytope.
For the feasibility of the problem setup, we use the following assumption.
\begin{assumption} \label{assum: feasibility of problem setup}
     $x_0^j = x_S \in \mathcal{X}$.
\end{assumption}

\subsection{Optimal Control Problem (OCP)}
We consider a control task where we would like to steer the system \eqref{eq:system} towards the target set $\mathcal{O} \subset \mathcal{X}$ while minimizing the expected sum of the stage cost $\ell(\cdot, \cdot): \mathcal{X} \times \mathcal{U} \rightarrow \mathbb{R}$ and robustly satisfying the constraints \eqref{eq:constraints} for every realization of additive disturbances.
We use the following assumptions on the target set $\mathcal{O}$ and the stage cost $\ell(\cdot, \cdot)$. 
\begin{assumption} \label{assum: Robust Positive Invariance}
(Robust Positive Invariant Set $\mathcal{O}$) A set $\mathcal{O} \subseteq \mathcal{X}$ is a robust positive invariant set for the autonomous system $x_{k+1}=(A+BK) x_{k} + w_k$ where $w_k \in \mathcal{W}$ and $K$ is a given state feedback gain such that $A+BK$ is Hurwitz, i.e., if $x_k \in \mathcal{O} \Rightarrow x_{k+1} \in \mathcal{O}, ~ \forall w_k \in \mathcal{W}$.
Furthermore, the set $\mathcal{O}$ is a polytope with a nonempty relative interior \cite{bertsekas2009convex} where its vertices are $\{v_1^\mathcal{O}, v_2^\mathcal{O}, ... , v_{l_\mathcal{O}}^\mathcal{O}\}$.
\end{assumption}

We define $\mathcal{KO} = \{u | u = Kx, x \in \mathcal{O}\}$.
By Assumption \ref{assum: Robust Positive Invariance}, $\forall x \in \mathcal{O}$, $~\exists u \in \mathcal{KO}$ such that $Ax + Bu + w \in \mathcal{O}, ~\forall w \in \mathcal{W}$.
\begin{assumption} \label{assum: Stage cost}
(Differentiable, Convex, and Positive Definite Stage Cost)
The stage cost $\ell(\cdot, \cdot)$ is differentiable and jointly convex in its arguments. Furthermore, we assume that $\ell(x, u) = 0, ~\forall x \in \mathcal{O}$ and $\forall u \in \mathcal{KO}$. Also, $\ell(x, u) > 0, ~\forall x \in  \mathcal{X} \setminus \mathcal{O}, ~\forall u \in  \mathcal{U} \setminus \mathcal{KO}$.
\end{assumption}

We want to compute a solution to the following infinite horizon OCP under the Assumptions \ref{assum: bounded noise}-\ref{assum: Stage cost} at the given $x_S$:
\begin{equation} \label{eq:ftocp}
\begin{aligned}
    & Q^{\star}(x_S) = \min_{\pi(\cdot)} ~\,\mathbb{E}_{w_{0:\infty}}\Biggr[\sum_{k=0}^\infty \ell(x_k, ~ \pi(x_k)) \Biggr] \\
    & \qquad \qquad ~ \textnormal{s.t.,} ~~ x_{k+1}  = A x_k + B \pi(x_k) + w_k, \\
    & \qquad \qquad  \qquad ~\, x_0 = x_S, ~ w_k \sim p(w), \\
    & \qquad \qquad  \qquad ~\, x_k \in \mathcal{X}, ~ \pi(x_k) \in \mathcal{U},  ~\forall w_k \in \mathcal{W},  \\
    & \qquad \qquad \qquad ~\, \forall k \geq 0.
\end{aligned}
\end{equation}
Note that system \eqref{eq:system} is uncertain and that the OCP \eqref{eq:ftocp} involves an optimization over state feedback policies $\pi(\cdot)$ \cite[Chap.1.2]{bertsekas2012dynamic}
\begin{remark}
Once the state $x_k^j$ of \eqref{eq:system} reaches the target set $\mathcal{O}$, it will remain within it. This is because remaining in the target set $\mathcal{O}$ is optimal according to Assumption \ref{assum: Stage cost}. 
Moreover, remaining in the target set $\mathcal{O}$ is feasible by Assumption \ref{assum: Robust Positive Invariance}.
\end{remark}

\section{Approach}
\subsection{Solution Approach}
There are three main challenges in solving \eqref{eq:ftocp}, namely: 
\begin{enumerate}[(C1)]
    \item Solving the problem \eqref{eq:ftocp} for $T \rightarrow \infty$ is computationally demanding.
    \item Minimizing the expected cost in \eqref{eq:ftocp} involves infinitely nested multivariate integrals.
    \item Optimizing over control policies \(\pi(\cdot)\) is an infinite dimensional optimization problem.
\end{enumerate}
\label{sub: Solution Approach to ftocp}
We address (C1)-(C3) by solving a simpler constrained OCP with prediction horizon $N=1$ in a receding horizon fashion and updating its parameters over multiple episodes.
Specifically, we design an MPC controller of the following form:
\begin{equation} \label{eq:MPC}
\begin{aligned}
    & {V}^{j} (x) = \min_{\substack{u_{k}}} ~\, \ell({x}_{k}, u_{k}) + \mathbb{E}_{w_k} \begin{bmatrix} Q^{j-1}({x}_{k+1}) \end{bmatrix}\\
    & \qquad \qquad \, \textnormal{s.t.,} \, ~ {x}_{k+1} = A {x}_{k} + B u_{k} + w_k, \\
    & \qquad \qquad \qquad \, {x}_{k} = {x}, ~ w_k \sim p(w)\\
    & \qquad \qquad \qquad \, {x}_{k} \in \mathcal{X}, ~ u_{k} \in \mathcal{U},\\
    & \qquad \qquad \qquad \, {x}_{k+1} \in \mathcal{SS}^{j-1}, ~ \forall w_k \in \mathcal{W},
\end{aligned}
\end{equation}
where ${x}_{k+i}$ is the state and ${u}_{k+i}$ is the input at predicted time step $k+i$. $\mathcal{SS}^{j-1}$ is the terminal set, and $Q^{j-1}(\cdot)$ is the terminal cost, which is the estimated value function defined on the terminal set $\mathcal{SS}^{j-1}$.
Let $T^j$ be the termination time of the $j$-th episode, i.e., $x^j_{T^j} \in \mathcal{O}$.

\begin{remark}
The correct notation in \eqref{eq:MPC} requires introducing $x_{0|k}=x$ and $x_{1|k}$ instead of $x_k$ and $x_{k+1}$ to denote the initial state and the first predicted state at time $k+1$. We chose to avoid this for readability.
\end{remark}
For time step $k$ of episode $j$, we store a state $x_k^j$ and the associated optimal input $u_k^\star = \pi^{j\star}(x_k^j)$ to learn the terminal set $\mathcal{SS}^{j}$ and the terminal cost $Q^{j}(\cdot)$ for the next episode $j+1$ after the episode $j$ is completed, which will be detailed in Sec. \ref{sec: learning terminal set and cost}.
$\mathcal{SS}^{j-1}$ is also called a safe set \cite{rosolia2017learning_journal} since if properly computed, it is robust positive invariant, i.e., $\forall x \in \mathcal{SS}^{j-1}, ~ \exists u \in \mathcal{U}, ~ \mathrm{s.t.,}~ Ax +Bu +w \in \mathcal{SS}^{j-1}, ~ \forall w \in \mathcal{W}$. 
Later in Sec. \ref{subsec: optimiality gap}, we will show that the estimated value function $Q^{j}(x_S)$ will converge the neighborhood of the optimal value function $Q^{\star}(x_S)$ defined in \eqref{eq:ftocp} as $j$ goes to infinity.
The block diagram of the proposed approach is illustrated in Fig. \ref{fig:blockdiagram}.
\begin{figure}[H]
\begin{center}
\includegraphics[width=0.80\linewidth,keepaspectratio]{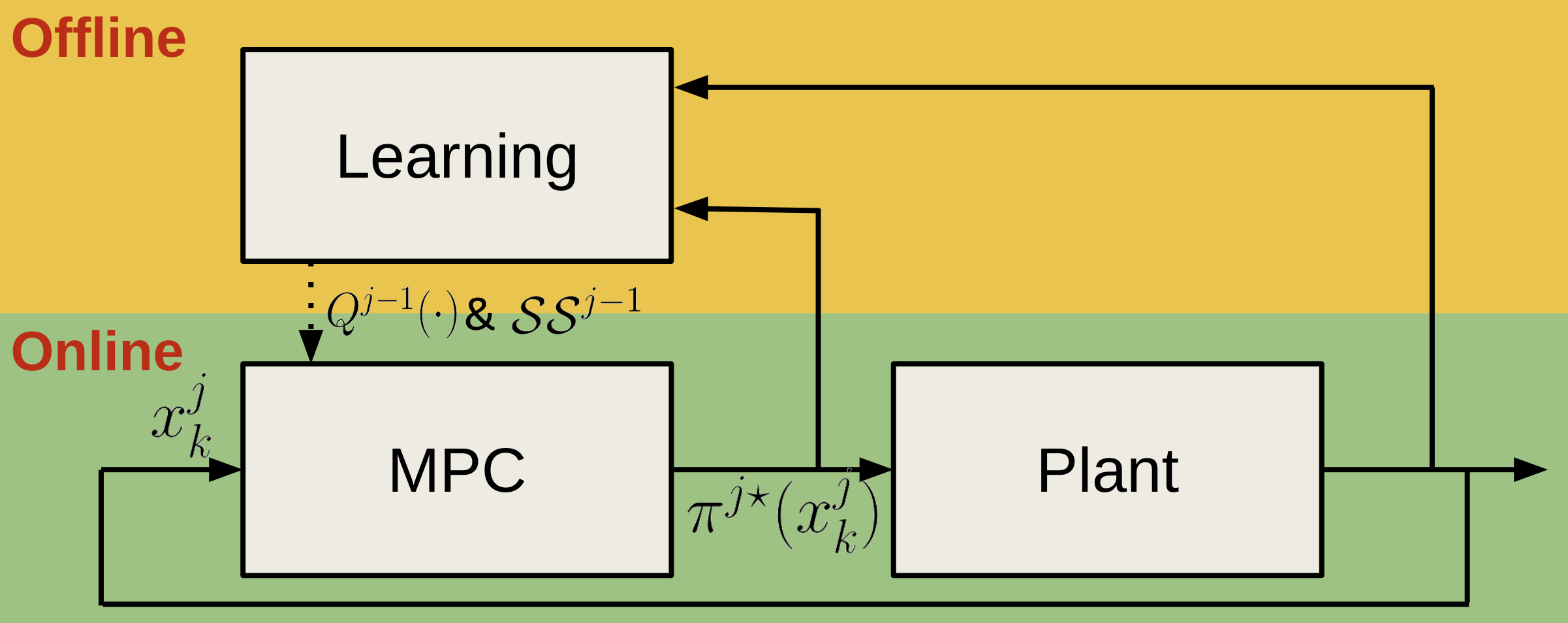}
\caption{Overall block diagram of the proposed approach. Online, we solve a tractable form of the MPC \eqref{eq:MPC}, which will be presented in \eqref{eq:MPC discrete}, and apply the optimal input to the system \eqref{eq:system}. Offline, we store closed-loop states and the associated inputs and update the terminal set $\mathcal{SS}^{j}$ and the terminal cost $Q^{j}(\cdot)$, which will be presented in Sec. \ref{sec: learning terminal set and cost}.}
\label{fig:blockdiagram}
\end{center}
\end{figure}
\vspace{-2.0em}
\begin{remark} \label{rmk: N = 1}
    We consider an OCP with $N=1$ because extending to a multi-step OCP ($N > 1$) involves challenging propagation of $p(w)$ along the predicted state trajectory unless it is Gaussian. However, with $N=1$, the terminal constraint of \eqref{eq:MPC}, ${x}_{k+1} \in \mathcal{SS}^{j-1}, ~ \forall w_k \in \mathcal{W}$, enforces that all closed-loop states $x_k^j$ of the system \eqref{eq:system} controlled by \eqref{eq:MPC} remain within $\mathcal{SS}^{j-1}$, and no data points fall outside this set. To explore beyond $\mathcal{SS}^{j-1}$, we incorporate exploration, which will be described in Sec. \ref{sec: exploration}, into the Learning phase in Fig. \ref{fig:blockdiagram}.
\end{remark}

\subsection{Tractable reformulation of \eqref{eq:MPC}} \label{sec: Approx}
\eqref{eq:MPC} is computationally intractable for real-time applications because evaluating the expected terminal cost in \eqref{eq:MPC} involves a multivariate integral. This section presents the tractable reformulation of \eqref{eq:MPC}.
\subsubsection{Tractable reformulation of the expected terminal cost in \eqref{eq:MPC}}
We reformulate the expected terminal cost in \eqref{eq:MPC} by approximating the multivariate integral with a discrete sum by discretizing the disturbance set $\mathcal{W}$.
Our discretization approach involves sampling disturbances randomly within the disturbance set $\mathcal{W}$ and including the vertices of $\mathcal{W}$. 
The discretized disturbances consist of all sampled points and the vertices of $\mathcal{W}$ as shown in Fig. \ref{fig:disturbance set grid}.
\begin{figure}[ht]
\begin{center}
\includegraphics[width=0.35\linewidth,keepaspectratio]{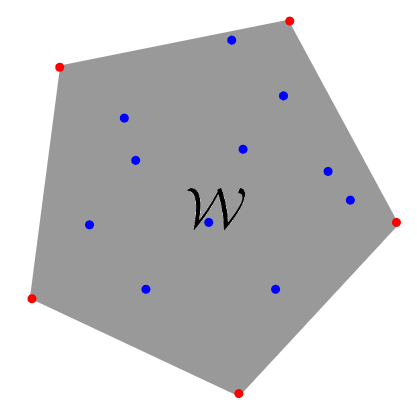}
\caption{A two-dimensional example of the discretization method. The dark gray polytope is the disturbance set $\mathcal{W}$. All dots are the discretized disturbances. The blue dots denote randomly sampled points while the red dots denote vertices of the disturbance set $\mathcal{W}$.}
\label{fig:disturbance set grid}
\end{center}
\vspace{-1.0em}
\end{figure}\\
Let $M$ denote the number of discretized disturbances and $\tilde{w}_m$ denote $m$-th discretized disturbance.
As the sampled disturbance set $\{\tilde{w}_m\}_{m=1}^M$ includes vertices of $\mathcal{W}$, any disturbance $w \in \mathcal{W}$ can be parameterized as a convex combination of the discretized disturbances as follows:
\begin{equation} \label{eq: cvx sum with mu w}
\begin{aligned}
    & w = \sum_{m=1}^M \mu_m(w) \tilde{w}_m,
\end{aligned}
\end{equation}
where $\mu_m(w)$ is the parameterized convex coefficient.
We will use \eqref{eq: cvx sum with mu w} to replace a generic disturbance $w$ with the convex combination of the sampled disturbance set $\{\tilde{w}_m\}_{m=1}^M$.
This replacement is needed to approximate the expected cost.
To compute the optimality gap in Section \ref{subsec: optimiality gap}, the parameterized coefficients need to satisfy the property introduced next.
\begin{assumption} \label{assum: muw assumption}
$\forall w \in \mathcal{W}$, let $i(w) \in \{m | m \in \{1,\cdots,M\} \text{ and } \mu_m(w) \neq 0\}$, the following conditions hold:
\ref{assum: muw assumption}.1. Existence of bound $\delta(w,M)$
\begin{equation} \label{eq: delta_bnd}
    \exists \delta(w,M)>0, ~||w - \tilde{w}_{i(w)}|| \leq \delta(w,M). 
\end{equation}
\ref{assum: muw assumption}.2 Decreasing bound $\delta(w,M)$ as increasing $M$ \\
Consider $M=M_1$, and we continuously sample the disturbance within $\mathcal{W}$ while maintaining the previous $M_1$ samples.
Then, we have the following:
\begin{equation} \label{eq: M1 M2 assumption}
    \exists N > M_1, ~\forall M_2 \geq N, ~\delta(w,M_1) > \delta(w,M_2).
\end{equation}
\end{assumption}

Appendix \ref{sec: calculating muw} describes different ways to calculate $\mu_m(w)$, that satisfy Assumption~\ref{assum: muw assumption}, and its expected value for $p(w)$, denoted $p_m$ next. Note that \(\mu_m(w)\) is not necessarily unique for a given \(w\). In such cases, any \(\mu_m(w)\), such as the least norm solution, can be chosen.

\begin{remark}
In Assumption~\ref{assum: muw assumption}.1, the function $\delta(w, M)$ bounds the distance between the actual disturbance $w$ and the discretized disturbance $\tilde{w}_{i(w)}$, where $\mu_{i(w)}(w) \neq 0$, ensuring $||w - \tilde{w}_{i(w)}|| \leq \delta(w, M)$. This bound represents the diameter of the convex hull of discretized disturbances with non-zero $\mu_{i(w)}(w)$. 
In Assumption~\ref{assum: muw assumption}.2 as the number of discretized disturbances $M$ increases, $\delta(w, M)$ decreases. 
\end{remark}

Once the function $\mu_m(w)$ is calculated, we take the expectation of it with respect to $p(w)$, which is a coefficient of the discrete sum to approximate the expected terminal cost in \eqref{eq:MPC}:
\begin{equation} \label{eq: pm}
\begin{aligned}
    & p_m = \mathbb{E}_w [\mu_m(w)].
\end{aligned}
\end{equation}
Note that $p_m$ is computed prior to controller execution, and once computed, its value will not change for the entire process.

Using the coefficients $p_m$, we approximate the expected terminal cost in the \eqref{eq:MPC} with a discrete sum as follows:
\begin{equation} \label{eq: terminal cost approximation}
    \begin{aligned}
        & \mathbb{E}_{w_k} \begin{bmatrix} Q^{j-1}({x}_{k+1}) \end{bmatrix} \simeq \sum_{m=1}^{M} p_m Q^{j-1}({x}_{k+1}) 
    \end{aligned}
\end{equation}
\subsubsection{Tractable MPC problem}
The tractable reformulation of the MPC problem \eqref{eq:MPC} can be achieved by replacing the terminal cost of \eqref{eq:MPC} with its approximation in \eqref{eq: terminal cost approximation} and robustifying the terminal constraint ${x}_{1|k}^j \in \mathcal{SS}^{j-1}, \, \forall w_k^j \in \mathcal{W}$.  
In summary, the tractable reformulation of \eqref{eq:MPC} is given as follows:
\begin{equation} \label{eq:MPC discrete}
\begin{split}
    & \hat{V}^{j} (x) = \min_{\substack{u_{k}}} ~\,\ell({x}_{k}, u_{k}) + \sum_{m=1}^{M} p_m Q^{j-1}(\bar{x}_{k+1} + \tilde{w}_m)\\
    & \qquad \qquad \, \textnormal{s.t.,} ~ \, \bar{x}_{k+1} = A {x}_{k} + B u_{k}, \\
    & \qquad \qquad \qquad \, {x}_{k} = {x}, \\
    & \qquad \qquad \qquad \, {x}_{k} \in \mathcal{X}, ~ u_{k} \in \mathcal{U}, \\
    & \qquad \qquad \qquad \, \bar{x}_{k+1} \in \mathcal{SS}^{j-1} \ominus \mathcal{W}, 
\end{split}
\end{equation}
where $\bar{x}_{k+1}$ is a nominal state at predicted time step $k+1$.

After solving \eqref{eq:MPC discrete}, we apply the optimal MPC policy $\pi^j(\cdot)$:
\begin{equation} \label{eq:MPC policy}
    u_k^j = \pi^{j \star}(x_k^j) = u_{k}^{\star}
\end{equation}
to system \eqref{eq:system} at time $k$ of the episode $j$ in closed-loop.
Note that \eqref{eq:MPC discrete} and \eqref{eq:MPC policy} correspond to the "MPC" block in Fig. \ref{fig:blockdiagram}.

\subsubsection{Bounding the Expected Terminal Cost: Proof of Upper Bound}
In this section, we show that the approximation in \eqref{eq: terminal cost approximation} is an upper bound of the expected terminal cost in \eqref{eq:MPC}, as demonstrated in  \cite[Thm.1]{de2005stochastic}.
Later in Sec. \ref{sec: value function update}, we will define the terminal cost function $Q^j(\cdot)$ and show that it is convex. For now, we assume that $Q^{j}(\cdot)$ is convex.

\begin{proposition} \label{prop: lyapunov function of the closed-loop system}
Suppose the value function $Q^{j}(\cdot)$ is a convex function.
Let Assumptions \ref{assum: bounded noise}-\ref{assum: Stage cost} hold.
Then, the value function $Q^{j}(\cdot)$ satisfies the following inequality for all $x \in \mathcal{SS}^{j}$:
\begin{equation} \label{eq: upperbound}
\begin{aligned}
    & \sum_{m=1}^{M} p_m Q^{j}(Ax + Bu + \tilde{w}_m) \geq \mathbb{E}_w[Q^{j}(Ax + Bu + w)]. \\
\end{aligned}
\end{equation}
\end{proposition}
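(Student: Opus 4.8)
The plan is to recognize the left-hand side of \eqref{eq: upperbound} as an evaluation of the convex function $Q^j(\cdot)$ at the discretized disturbances, weighted by the coefficients $p_m = \mathbb{E}_w[\mu_m(w)]$, and then to pull the convex combination \eqref{eq: cvx sum with mu w} inside $Q^j$ using Jensen's inequality. Concretely, fix $x \in \mathcal{SS}^j$ and an admissible $u$ such that $Ax + Bu + w \in \mathcal{SS}^j$ for all $w \in \mathcal{W}$ (so that $Q^j$ is defined at all the relevant points; this is where I would invoke the robust positive invariance of $\mathcal{SS}^j$ together with $\mathcal{W}$ being the support of $w$). For a generic realization $w$, write $w = \sum_{m=1}^M \mu_m(w)\,\tilde{w}_m$ with $\mu_m(w) \ge 0$ and $\sum_m \mu_m(w) = 1$; hence
\[
Ax + Bu + w = \sum_{m=1}^M \mu_m(w)\,(Ax + Bu + \tilde{w}_m),
\]
since the coefficients sum to one. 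This is the key algebraic step — affine pushforward of a convex combination.

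Next, apply Jensen's inequality to the convex function $Q^j$ with the weights $\mu_m(w)$:
\[
Q^j(Ax + Bu + w) \;=\; Q^j\!\Bigl(\textstyle\sum_{m=1}^M \mu_m(w)\,(Ax + Bu + \tilde{w}_m)\Bigr) \;\le\; \sum_{m=1}^M \mu_m(w)\,Q^j(Ax + Bu + \tilde{w}_m).
\]
Now take the expectation over $w \sim p(w)$ on both sides. The left side becomes $\mathbb{E}_w[Q^j(Ax + Bu + w)]$. On the right side, linearity of expectation and the definition \eqref{eq: pm} give
\[
\mathbb{E}_w\!\Bigl[\textstyle\sum_{m=1}^M \mu_m(w)\,Q^j(Ax+Bu+\tilde{w}_m)\Bigr] = \sum_{m=1}^M \mathbb{E}_w[\mu_m(w)]\,Q^j(Ax+Bu+\tilde{w}_m) = \sum_{m=1}^M p_m\,Q^j(Ax+Bu+\tilde{w}_m),
\]
because the terms $Q^j(Ax+Bu+\tilde{w}_m)$ do not depend on $w$. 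Chaining the two displays yields exactly \eqref{eq: upperbound}.

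The argument is essentially Jensen plus the partition-of-unity property of the $\mu_m$, so the routine parts are light. The one point that needs care — and what I expect to be the main obstacle to a fully rigorous write-up — is the domain/measurability bookkeeping: one must ensure that every point $Ax + Bu + \tilde{w}_m$ lies in the domain $\mathcal{SS}^j$ of $Q^j$ so that the inequality is between finite quantities (here I would lean on Assumption~\ref{assum: Robust Positive Invariance}-style invariance of $\mathcal{SS}^j$ and on the $\tilde{w}_m$ being points of $\mathcal{W}$, or its vertices, hence in $\mathcal{W}$), and that $w \mapsto \mu_m(w)$ is measurable so that $p_m$ and the expectations are well defined (this is implicitly guaranteed by the construction in Appendix~\ref{sec: calculating muw}, e.g. the least-norm selection). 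With those in hand, the statement follows; note the proof uses convexity of $Q^j$ but none of the finer structure, consistent with the remark that $Q^j$ being convex is all that is assumed at this stage.
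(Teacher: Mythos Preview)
Your proof is correct and follows essentially the same approach as the paper: write $w$ as the convex combination $\sum_m \mu_m(w)\tilde{w}_m$, apply convexity of $Q^j$ (Jensen) pointwise in $w$, then take the expectation and use $p_m = \mathbb{E}_w[\mu_m(w)]$. The paper presents exactly these steps via the integral form of the expectation and does not dwell on the domain/measurability caveats you flag, but the core argument is identical.
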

\begin{proof}
Remind that $\mu_m(w)$ is a convex coefficient and $w = \sum_{m=1}^M \mu_m(w) \tilde{w}_m$ from \eqref{eq: cvx sum with mu w}.
Thus, we have the following:
\begin{equation*}
\begin{aligned}
    & ~~~~ \mathbb{E}_w[Q^{j}(Ax + Bu + w)] \\
    & = {\smallint}_{w\in\mathcal{W}} Q^{j}(Ax + Bu + w) p(w) dw \\
    & = {\smallint}_{w\in\mathcal{W}} Q^{j}\big({\scriptstyle\sum}_{m=1}^{M} \mu_m(w)(Ax + Bu + \tilde{w}_m)\big) p(w) dw
\end{aligned}
\end{equation*}
The first equality is the definition of expectation.
Due to the convexity of the value function $Q^j(\cdot)$, we have the following
\begin{equation*}
\begin{aligned}
    & ~~~~ {\smallint}_{w\in\mathcal{W}} Q^{j}\big({\scriptstyle\sum}_{m=1}^{M} \mu_m(w)(Ax + Bu + \tilde{w}_m)\big) p(w) dw\\
    & \leq {\smallint}_{w\in\mathcal{W}} {\scriptstyle\sum}_{m=1}^{M} \mu_m(w) Q^{j}(Ax + Bu + \tilde{w}_m) p(w) dw \\
    & = {\scriptstyle\sum}_{m=1}^{M} Q^{j}(Ax + Bu + \tilde{w}_m) {\smallint}_{w\in\mathcal{W}} \mu_m(w)  p(w) dw \\
    & = {\scriptstyle\sum}_{m=1}^{M} p_m Q^{j}(Ax + Bu + \tilde{w}_m),
\end{aligned}
\end{equation*}
where the last equality is from the definition of $p_m$ in \eqref{eq: pm}.
\end{proof}

In Section~\ref{subsec: Convergence}, we show that property \eqref{eq: upperbound} is required to prove the stochastic stability of the closed loop system \cite{de2005stochastic}.

\section{Learning the safe Set $\mathcal{SS}^j$ \\ and Value Function $Q^j(\cdot)$ using Data} \label{sec: learning terminal set and cost}
In this section, we present how the safe set $\mathcal{SS}^j$ and the value function $Q^j(\cdot)$ in \eqref{eq:MPC discrete} are updated using the data, which is the "Learning" block in Fig. \ref{fig:blockdiagram}.
This section is organized as follows.
First, we describe how to initialize the safe set $\mathcal{SS}^j$ and the value function $Q^j(\cdot)$, i.e., how to compute $\mathcal{SS}^0$ and $Q^0(\cdot)$.
Second, we present \textit{Exploration}, a new method to explore the state space while satisfying the constraints \eqref{eq:constraints}. This is the novel part and one of our contributions.
Third, we describe how to update $\mathcal{SS}^j$ and $Q^j(\cdot)$ offline using the data.
This part is similar to the methods in \cite[Sec.V.B-D]{rosolia2021robust}.
At the end of this section, we introduce the properties of the safe set $\mathcal{SS}^j$ and the value function $Q^j(\cdot)$.

\subsection{Initialization} \label{sec: initialization}
In this subsection, we describe how to compute $\mathcal{SS}^0$ and $Q^0(\cdot)$. 
To do that, we make the following assumption on the existence of a feasible solution to the problem \eqref{eq:ftocp}.
\begin{assumption} \label{assum: initial solution}
    A sub-optimal controller of the infinite horizon OCP \eqref{eq:ftocp} is available, and the system \eqref{eq:system} controlled by this sub-optimal controller ends the episode within a finite time.
\end{assumption}
\begin{remark}
    Assumption \ref{assum: initial solution} is not overly restrictive in some practical applications. For instance, in autonomous racing scenarios, which are formulated as an infinite horizon OCP in \cite{rosolia2019racing,rosolia2017learning_journal}, a tube MPC in \cite{wischnewski2022tube}, which follows a planned trajectory, can be utilized for a sub-optimal controller. 
\end{remark}

We provide a concise summary of the design process for $\mathcal{SS}^0$ and $Q^0(\cdot)$ when the sub-optimal controller satisfied Assumption \ref{assum: initial solution} is a Tube MPC \cite{mayne2005robusto}. Please refer to Appendix \ref{sec: tube MPC initialize} for a more comprehensive understanding.

We conduct one closed-loop episode of the system \eqref{eq:system} controlled by the tube MPC, during which we collect the closed-loop nominal state and the surrounding tube.
Let $\bar{x}_{0:T}^{0\star}$ denote the collected closed-loop states, and let $\mathcal{E}$ denote the corresponding tube.
Then we construct the following sets:
\begin{equation} \label{eq: tubes}
    \begin{split}
        & \mathcal{E}_k = \bar{x}_{k}^{0\star} \oplus \mathcal{E}, ~ k \in \{0,\cdots,T+1\},
    \end{split}
\end{equation}
where $\mathcal{E}_{0:T+1}$ represents the trajectory of tubes.
Since $\mathcal{E}$ is time invariant, the number of vertices of $\mathcal{E}_k$ is same for all $k \in {0,\cdots,T+1}$. Let $l$ denote the number of vertices of $\mathcal{E}_k$ and $v_1^{\mathcal{E}_k},\cdots, v_{l}^{\mathcal{E}_k}$ denotes the vertices of $\mathcal{E}_k$.

We construct the initial state data matrix $\mathbf{X}^{0}$ to store the collected data in a matrix form as follows:
\begin{equation*} 
\begin{split}
    & \mathbf{X}^{0} = \Big[v_1^\mathcal{O},\cdots, v_{l_\mathcal{O}}^\mathcal{O}, v_1^{\mathcal{E}_0},\cdots, v_{l}^{\mathcal{E}_0}, \cdots, v_{1}^{\mathcal{E}_{T+1}},\cdots, v_{l}^{\mathcal{E}_{T+1}}\Big],
\end{split}
\end{equation*}
which consists of the vertices of the target set $\mathcal{O}$ and $\mathcal{E}_k(\bar{x}_k^{0\star}), ~\forall k \in \{0,\cdots,T+1\}$.

The initial safe set $\mathcal{SS}^0$ is then defined as a convex hull of all column vectors in the initial state data matrix $\mathbf{X}^{0}$, as follows:
\begin{equation} \label{eq: initial terminal constraint app}
    \mathcal{SS}^0 = \mathrm{conv}\Biggl\{\bigcup_{i = 1}^{L} \{[\mathbf{X}^{0}]_i\}\Biggr\},
\end{equation}
where $L$ is the number of the states stored in the state data matrix $\mathbf{X}^{0}$, and $[\mathbf{X}^{0}]_i$ denotes $i$-th column vector of $\mathbf{X}^{0}$.
Thus, $\mathcal{SS}^0$ is the convex hull of the target set $\mathcal{O}$ and the trajectory of tubes $\mathcal{E}_{0:T+1}$.

Similarly, during the episode, we collect the corresponding control input of each column vector of $\mathbf{X}^{0}$.
Let $\pi_\mathrm{Tube}$ denote the tube MPC policy.
Then, $i$-th column vector of the initial input data matrix $\mathbf{U}^{0}$ is defined as follows:
\begin{equation}
    [\mathbf{U}^{0}]_i = \pi_\mathrm{Tube}([\mathbf{X}^{0}]_i).
\end{equation}

For each state in $\mathbf{X}^{0}$ and the corresponding input in $\mathbf{U}^{0}$, we calculate an estimated cost-to-go value and store it in a cost data matrix $\mathbf{J}^{0}$.
$i$-th element of the cost data matrix $\mathbf{J}^{0}$ should satisfy the following conditions:
\begin{equation} \label{eq: Q0 condition}
    \begin{aligned}
        & [\mathbf{J}^{0}]_i  \geq \ell([\mathbf{X}^{0}]_i,[\mathbf{U}^{0}]_i) \\
        & ~~~~~~~~~~ + \sum_{m=1}^M p_m Q^0(A[\mathbf{X}^{0}]_i + B[\mathbf{U}^{0}]_i + \tilde{w}_m)
    \end{aligned}
\end{equation}
where $Q^0(\cdot)$ is the initial value function defined on $\mathcal{SS}^{0}$ computed using $\mathbf{X}^0$ and $\mathbf{J}^{0}$ as:
\begin{equation} \label{eq: initial value function}
\begin{split}
    & Q^0(x) = \min_{\bm{\lambda}^0} ~\mathbf{J}^0 \bm{\lambda}^0 \\
    & \qquad ~~~~~~ \, \textnormal{s.t.,} ~ \mathbf{X}^0 \bm{\lambda}^0 = x, \\
    & \qquad \qquad ~~~~~~ \bm{\lambda}^0 \geq \bm{0}, \,\, \bm{1}^\top \bm{\lambda}^0 = 1.
\end{split}
\end{equation}
$\bm{\lambda}^{0}$ is a convex coefficient vector of the problem \eqref{eq: initial value function}, which represents $x \in \mathcal{SS}^{0}$ as a convex combination of the data points of $\mathbf{X}^0$ as described in the first constraint of \eqref{eq: initial value function}. 
In Appendix \ref{sec: tube MPC initialize}, we present a way to calculate $\mathbf{J}^{0}$ to satisfy the condition \eqref{eq: Q0 condition}.
Note that $Q^0(x)$ is a convex, continuous, piecewise affine function as it is an optimal objective function of the multi-parametric linear program \cite{borrelli2017predictive}. When calculating the optimal input in \eqref{eq:MPC discrete} at episode $j=1$, the function $Q^0(x)$ can be incorporated into the problem \eqref{eq:MPC discrete} and form a single unified optimization problem. 


\subsection{Exploration} \label{sec: exploration}
Exploration refers to finding a subset of states from the set $\{x \in \mathcal{X} \mid \eqref{eq:MPC discrete} \text{ is feasible}\}$ after episode $j$ has finished and before episode $j+1$ begins. This process consists of two steps: First, we compute a descent direction to improve performance through exploration. Second, we explore states by identifying those outside the safe set $\mathcal{SS}^j$ in the descent direction while examining a subset of states within $\mathcal{SS}^j$. 

The condition that "\eqref{eq:MPC discrete} \text{ is feasible}" means that the state $x$ can be robustly guided to the safe set $\mathcal{SS}^{j-1}$. This condition is crucial for proving the robust positive invariance of the updated set $\mathcal{SS}^{j}$ in Proposition \ref{prop: SS0 is rpi}, which is key to ensuring the recursive feasibility of \eqref{eq:MPC discrete} in Sec. \ref{subsec: feasibility}. 
The set of explored states is then added to $\mathcal{SS}^{j-1}$ and used to update the function $Q^{j-1}(\cdot)$ to improve the expected cost in \eqref{eq:ftocp} for the closed-loop system \eqref{eq:system}, which is controlled by the MPC policy \eqref{eq:MPC policy}, referred to as the controller's "performance."

\begin{remark}
In prior LMPC works \cite{rosolia2017learning_journal, rosolia2018stochastic, rosolia2021robust}, exploration beyond the safe set was naturally embedded into the formulation. In those approaches, the horizon length is $N > 1$, which allows for some visited states $x_k^j$ to fall outside $\mathcal{SS}^{j-1}$. This differs from our setting, where $N = 1$, as noted in Remark \ref{rmk: N = 1}.
\end{remark}

As in \cite[Algorithm 2]{bujarbaruah2022robust}, the exploration can be carried out by randomly selecting a state $x$ from the set $\mathcal{X}$ and checking whether the state $x$ is feasible for the MPC \eqref{eq:MPC discrete}. 
However, as this method selects the directions of exploration not necessarily relevant to the direction of improving the performance, it can be computationally inefficient.

\subsubsection{Calculating a descent direction} 
To iteratively reduce the cost, it is crucial to determine a descent direction that aligns with the gradient of the objective function in \eqref{eq:MPC discrete} at its optimal solution $u_{k}^{\star}$. The descent direction is a vector in the input space that points toward a lower-cost region. By following this direction, the algorithm minimizes the cost function at each step in a greedy manner, avoiding inefficient randomization.

For brevity, we reformulate the problem \eqref{eq:MPC discrete} as follows:
\begin{subequations} \label{eq:MPC reformulated}
\begin{align}
    & \min_{u} ~ f_k^j(u)\\
    & ~ \textnormal{s.t.,} ~~ H (A x_k^j + B u) \leq h, \label{subeq: terminal constraints reform MPC} \\
    & \qquad ~ H_u u \leq h_u, \label{subeq: input constraints reform MPC}
\end{align}
\end{subequations}
where $f_k^j(u)= \ell(x_k^j, u) + \sum_{m=1}^{M} p_m Q^{j-1}(Ax_k^j + Bu + \tilde{w}_m)$, which is a cost function of \eqref{eq:MPC discrete} in abstract form, and $\mathcal{SS}^{j-1} \ominus \mathcal{W} = \{x | H x \leq h\}$.
We also replaced $x$ in (11) with $x_k^j$ to clarify that the initial state depends on the time index $k$ and the iteration $j$.
Note that, $\mathcal{SS}^{j-1} \ominus \mathcal{W}$ is a convex polytope that will be introduced in Sec. \ref{sec: value function update}.
Although the parameters $H$ and $h$ in \eqref{subeq: terminal constraints reform MPC} are updated for every episode $j$, we denote the constraint without any superscript $^j$ for brevity.
The goal is to calculate the descent direction using the Karush-Kuhn-Tucker (KKT) conditions \cite{boyd2004convex}.

The optimal solutions of \eqref{eq:MPC reformulated}, ${u}^{\star}$, satisfies the following stationarity condition of the KKT conditions:
\begin{subequations} \label{eq:KKT}
\begin{align}
    & \nabla f_k^j({u}^{\star}) + (H B)^\top \bm{\nu}^{j\star}_{k} + H_u^\top \bm{\gamma}^{j\star}_{k}= \bm{0}, \label{subeq: station}
\end{align}
\end{subequations}
where $\bm{\nu}^{j\star}_{k} \geq \bm{0}$ and $\bm{\gamma}^{j\star}_{k} \geq \bm{0}$ are the optimal dual variables at time step $k$ of the episode $j$ associated with the constraints \eqref{subeq: terminal constraints reform MPC} and \eqref{subeq: input constraints reform MPC}, respectively. 
From the condition \eqref{subeq: station}, we can calculate the gradient of the cost function $\nabla f_k^j({u}^{\star})$ as follows:
\begin{equation} \label{eq: descent direction}
    - \nabla f_k^j({u}^{\star}) = (H B)^\top \bm{\nu}^{j\star}_{k} + H_u^\top \bm{\gamma}^{j\star}_{k}
\end{equation}
Using $\nabla f_k^j({u}^{\star})$, the descent direction $\bm{d}_{\mathrm{exp}} \in \mathbb{R}^{n_u}$ is a vector which satisfies the following inequality:
\begin{equation} \label{eq: descent direction vector d}
    \nabla f_k^j({u}^{\star})^\top \bm{d}_{\mathrm{exp}} \leq 0.
\end{equation}
Note that $\bm{d}_{\mathrm{exp}}\in \mathbb{R}^{n_u}$ is the vector in the input space.

\begin{remark}
    KKT conditions\cite{boyd2004convex} require the differentiability of the cost function. While Assumption \ref{assum: Stage cost} guarantees the differentiability of the stage cost $\ell(\cdot, \cdot)$, the function $Q^j(\cdot)$ is continuous and piecewise affine, which will be introduced in Sec. \ref{sec: value function update}, but lacks differentiability at certain points. However, these non-differentiable points have zero probability of occurring in states. Hence, this paper primarily uses gradient notation instead of subgradient notation for the cost function in \eqref{eq:MPC reformulated}.
    If the cost function $f_k^j({u})$ becomes non-differentiable, its subgradient must be used.
\end{remark}

\subsubsection{Finding states to be explored} We find states to be explored both outside and within the safe set $\mathcal{SS}^{j-1}$.
Our exploration strategy outside $\mathcal{SS}^{j-1}$ is as follows: for each collected state $x_k^j$, we find a 1-step robust reachable set(Definition in Appendix \ref{sec: robust reachable set}) from $x_k^j$ that is situated at the maximum distance from the current safe set $\mathcal{SS}^{j-1}$ in the descent direction $\bm{d}_{\mathrm{exp}}$. 

To ensure that all states in a 1-step robust reachable set are feasible solutions of \eqref{eq:MPC discrete}, we impose the following constraint: all states in the 1-step robust reachable set should be robustly steered to $\mathcal{SS}^{j-1}$. 
In particular, we try to solve the following optimization problem where the decision variables are the descent direction vector $\bm{d}_{\mathrm{exp}} \in \mathbb{R}^{n_u}$ and a control policy $\pi_{\mathrm{exp}}\colon \mathcal{X} \to \mathcal{U}$ as follows:
\begin{subequations} \label{eq: exploration optimization ideal}
    \begin{align}
    & d_{\mathcal{SS}^j}(x_k^j, {u}^{\star}) = \\
    & \max_{\substack{\bm{d}_{\mathrm{exp}}, \\ \pi_{\mathrm{exp}}(\cdot)}} ~\mathrm{dist}({x}_{k,\mathrm{exp}}^j, \mathcal{SS}^{j-1})  \label{subeq: distance to the target set}\\
    &  ~~~ \textnormal{s.t.,} ~~ \nabla f_k^j({u}^{\star})^\top {\bm{d}_{\mathrm{exp}}} \leq 0 \label{subeq: descent direction} \\
    & \quad \qquad \, {x}_{k,\mathrm{exp}}^j = A x_k^j + B ({u}^{\star} + \bm{d}_{\mathrm{exp}}) + w, ~w \sim p(w), \label{subeq: uncertain dynamics of exploring point} \\
    & \quad \qquad \,  {x}_{k,\mathrm{exp}}^j \in \mathcal{X}, ~  \forall w \in \mathcal{W}, \label{subeq: state constraints exploring point} \\
    & \quad \qquad \,  {u}^{\star} + \bm{d}_{\mathrm{exp}} \in \mathcal{U}, \label{subeq: input constraints exploring point} \\
    & \quad \qquad \, A {x}_{k,\mathrm{exp}}^j + B \pi_{\mathrm{exp}}({x}_{k,\mathrm{exp}}^j) \in \mathcal{SS}^{j-1}, ~  \forall w \in \mathcal{W}, \label{subeq: terminal constraint} \\
    & \quad \qquad \, \pi_{\mathrm{exp}}({x}_{k,\mathrm{exp}}^j) \in \mathcal{U}, ~  \forall w \in \mathcal{W}, \label{subeq: input policy satisfies input constraint}
    \end{align}
\end{subequations}
where ${x}_{k,\mathrm{exp}}^j$ is a state to be explored.
The cost function \eqref{subeq: distance to the target set} is a distance measure between the state ${x}_{k,\mathrm{exp}}^j$ and the safe set $\mathcal{SS}^{j-1}$.
The constraint \eqref{subeq: descent direction} ensures that the vector $\bm{d}_{\mathrm{exp}}$ is a descent direction.
The set of feasible solutions ${x}_{k,\mathrm{exp}}^j$ for all $ w \in \mathcal{W}$ is 1-step robust reachable set because of \eqref{subeq: uncertain dynamics of exploring point}, \eqref{subeq: state constraints exploring point}, and \eqref{subeq: input constraints exploring point}.
The constraints \eqref{subeq: terminal constraint} and \eqref{subeq: input policy satisfies input constraint} are to robustly steer the 1-step robust reachable set to $\mathcal{SS}^{j-1}$. 

There are two main challenges in solving the exploration optimization problem \eqref{eq: exploration optimization ideal}.
First, optimizing over control policies $\pi_\mathrm{exp}(\cdot)$ involves an infinite-dimensional optimization.
Second, maximizing a distance measure is non-convex.

We address the \textbf{first challenge} by approximating the control policy with a convex combination of the finite number of inputs, denoted as ${u}_{i}^\mathcal{W}$, where $i\in \{1,\cdots,l_w\}$.
Remind that $l_w$ is the number of vertices of $\mathcal{W}$ and  $v_i^\mathcal{W}$ is the $i$-th vertex of $\mathcal{W}$.
Each control input ${u}_{i}^\mathcal{W}$ satisfies the following:
\begin{subequations} \label{eq: convex control policy condition}
    \begin{align}
        & {u}_{i}^\mathcal{W} \in \mathcal{U}, \label{subeq: input policy satisfies input constraint hat} \\
        & A {x}_{i}^\mathcal{W} + B {u}_{i}^\mathcal{W} + w \in \mathcal{SS}^{j-1}, ~  \forall w \in \mathcal{W}, \label{subeq: terminal constraint hat} \\
        & {x}_{i}^\mathcal{W} = A x_k^j + B ({u}^{\star} + \bm{d}_{\mathrm{exp}}) + v_i^\mathcal{W}. 
    \end{align}
\end{subequations}
This means that a control input ${u}_{i}^\mathcal{W}$ robustly steers back the system \eqref{eq:system} at ${x}_{i}^\mathcal{W}$ to the safe set $\mathcal{SS}^{j-1}$, where ${x}_{i}^\mathcal{W}$ is a predicted state of the system \eqref{eq:system} starting from $x_k^j$, controlled by ${u}^{\star} + \bm{d}_{\mathrm{exp}}$ and perturbed by $v_i^\mathcal{W}$. Later in Corollary \ref{cor: exp feasible}, it will be shown that there always exist ${u}_{i}^\mathcal{W}$ satisfying \eqref{eq: convex control policy condition} if the MPC \eqref{eq:MPC discrete} is feasible at the state $x_k^j$.

The control policy is parameterized as a convex combination of  ${u}_{i}^\mathcal{W}, ~i\in \{1,\cdots,l_w\}$ as follows:
\begin{equation} \label{eq: convex control policy}
    \begin{aligned}
    & \hat{\pi}_{\mathrm{exp}}(x) \in \{u | u = [{u}_{1}^\mathcal{W},\cdots,{u}_{l_w}^\mathcal{W}] \bm{\lambda}, ~\bm{\lambda} \geq 0, ~\bm{1}^\top \bm{\lambda} = 1, \\
    & \qquad \qquad \quad ~~ x = A x_k^j + B ({u}^{\star} + \bm{d}_{\mathrm{exp}}) + [v_1^\mathcal{W},\cdots,v_{l_w}^\mathcal{W}] \bm{\lambda} \}.
    \end{aligned}
\end{equation}
Now, we prove that \eqref{eq: convex control policy} is a control policy that satisfies the constraints \eqref{subeq: terminal constraint} and \eqref{subeq: input policy satisfies input constraint}.
Later in Sec. \ref{sec: value function update}, we will define the safe set $\mathcal{SS}^{j-1}$ and show that it is convex. For now, we assume that $\mathcal{SS}^{j-1}$ is convex.
\begin{proposition} \label{prop: convex policy is valid}
(The control policy \eqref{eq: convex control policy} is a valid policy.) Suppose that $\mathcal{SS}^{j-1}$ is a convex set. Let Assumption \ref{assum: bounded noise} hold.
Then, the control policy \eqref{eq: convex control policy} satisfies \eqref{subeq: terminal constraint} and \eqref{subeq: input policy satisfies input constraint}.
\end{proposition}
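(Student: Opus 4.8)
The plan is to reduce the claim to two convexity arguments — one for $\mathcal{U}$ and one for $\mathcal{SS}^{j-1}$ — using only the polytopic description of $\mathcal{W}$ from Assumption~\ref{assum: bounded noise}. Since $x_k^j$, $u^{\star}$ and $\bm{d}_{\mathrm{exp}}$ are fixed, the map $w \mapsto A x_k^j + B(u^{\star} + \bm{d}_{\mathrm{exp}}) + w$ is a translation, so the domain of the set-valued map $\hat{\pi}_{\mathrm{exp}}(\cdot)$ in \eqref{eq: convex control policy} is exactly the set of states $x_{k,\mathrm{exp}}^j$ arising from \eqref{subeq: uncertain dynamics of exploring point}; I will show that at each such state $\hat{\pi}_{\mathrm{exp}}(x_{k,\mathrm{exp}}^j)$ is nonempty and that \emph{every} element of it satisfies \eqref{subeq: terminal constraint} and \eqref{subeq: input policy satisfies input constraint}.

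First I would fix an arbitrary $w \in \mathcal{W}$ and write $w = \sum_{i=1}^{l_w}\lambda_i v_i^{\mathcal{W}}$ with $\bm{\lambda}\ge \bm{0}$ and $\bm{1}^{\top}\bm{\lambda}=1$, which is possible because $\mathcal{W}=\mathrm{conv}\{v_1^{\mathcal{W}},\dots,v_{l_w}^{\mathcal{W}}\}$ (the decomposition need not be unique; any choice works). Using $\sum_i\lambda_i=1$ and the definition $x_i^{\mathcal{W}} = A x_k^j + B(u^{\star}+\bm{d}_{\mathrm{exp}}) + v_i^{\mathcal{W}}$ from \eqref{eq: convex control policy condition}, a one-line computation gives $\sum_{i=1}^{l_w}\lambda_i x_i^{\mathcal{W}} = A x_k^j + B(u^{\star}+\bm{d}_{\mathrm{exp}}) + w = x_{k,\mathrm{exp}}^j$, so $\bm{\lambda}$ is feasible in the defining set of \eqref{eq: convex control policy} at $x=x_{k,\mathrm{exp}}^j$ and that set is nonempty. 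Conversely, by the same identity, every $u\in\hat{\pi}_{\mathrm{exp}}(x_{k,\mathrm{exp}}^j)$ is of the form $u=\sum_{i=1}^{l_w}\lambda_i u_i^{\mathcal{W}}$ for some $\bm{\lambda}\ge\bm{0}$ with $\bm{1}^{\top}\bm{\lambda}=1$ and $\sum_i\lambda_i x_i^{\mathcal{W}}=x_{k,\mathrm{exp}}^j$.

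For such a $u$: each $u_i^{\mathcal{W}}\in\mathcal{U}$ by \eqref{subeq: input policy satisfies input constraint hat} and $\mathcal{U}$ is convex, hence $u=\sum_i\lambda_i u_i^{\mathcal{W}}\in\mathcal{U}$, which is \eqref{subeq: input policy satisfies input constraint}. For the terminal condition, take any $w'\in\mathcal{W}$; since $\sum_i\lambda_i=1$,
\begin{equation*}
A x_{k,\mathrm{exp}}^j + B u + w' \;=\; \sum_{i=1}^{l_w}\lambda_i\bigl(A x_i^{\mathcal{W}} + B u_i^{\mathcal{W}} + w'\bigr),
\end{equation*}
and each summand lies in $\mathcal{SS}^{j-1}$ by \eqref{subeq: terminal constraint hat}; convexity of $\mathcal{SS}^{j-1}$ (granted by hypothesis) then gives $A x_{k,\mathrm{exp}}^j + B u + w'\in\mathcal{SS}^{j-1}$ for all $w'\in\mathcal{W}$, i.e., the robust form of \eqref{subeq: terminal constraint}; in particular, since $\bm{0}\in\mathcal{W}$ by Assumption~\ref{assum: bounded noise}, the choice $w'=\bm{0}$ recovers \eqref{subeq: terminal constraint} as written. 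As $w$ was an arbitrary element of $\mathcal{W}$, both inclusions hold for all realizations, which is the assertion.

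I do not expect a genuine obstacle. The only points needing care are keeping the two roles of the disturbance distinct — the $w$ that produces the explored state $x_{k,\mathrm{exp}}^j$ versus the $w'$ used to robustify the return to $\mathcal{SS}^{j-1}$, which is exactly why the extra $+w$ appears in \eqref{subeq: terminal constraint hat} — and making sure the \emph{same} convex weights $\bm{\lambda}$ are used to express $x_{k,\mathrm{exp}}^j$ and the corresponding input, together with the already-granted convexity of $\mathcal{SS}^{j-1}$ and the existence of inputs $u_i^{\mathcal{W}}$ satisfying \eqref{eq: convex control policy condition}.
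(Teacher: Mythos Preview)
Your proof is correct and follows essentially the same approach as the paper: write $x_{k,\mathrm{exp}}^j$ as a convex combination of the vertex states $x_i^{\mathcal{W}}$, use the same weights on the $u_i^{\mathcal{W}}$, and invoke convexity of $\mathcal{U}$ and $\mathcal{SS}^{j-1}$. Your version is slightly more careful than the paper's in two respects --- you explicitly verify nonemptiness of the set-valued map and you argue for \emph{every} selection $u\in\hat{\pi}_{\mathrm{exp}}(x_{k,\mathrm{exp}}^j)$ rather than a distinguished one, and you separate the disturbance $w$ that generates $x_{k,\mathrm{exp}}^j$ from the $w'$ used for robustness --- but these are refinements of the same argument, not a different route.
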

\begin{proof}
Note that ${x}_{i}^\mathcal{W}$, where $i\in \{1,\cdots,l_w\}$, are the predicted states perturbed by $i$-th vertex of $\mathcal{W}$, $v_i^\mathcal{W}$, as described in \eqref{eq: convex control policy condition}.
Since $\mathcal{W}$ is a convex set, we can write the state $x_{k,\mathrm{exp}}^j$ defined in \eqref{subeq: uncertain dynamics of exploring point} for all $w \in \mathcal{W}$ as a convex combination of ${x}_{i}^\mathcal{W}$, where $i\in \{1,\cdots,l_w\}$, as follows:
\begin{equation} \label{eq: x_exp in cvx comb}
    \begin{aligned}
        & x_{k,\mathrm{exp}}^j ={\scriptstyle\sum}_{i=1}^{l_w} [\bm{\lambda}]_i {x}_{i}^\mathcal{W}\\
        & \qquad ~ \, =  A x_k^j + B ({u}^{\star} + \bm{d}_{\mathrm{exp}}) + [v_1^\mathcal{W},\cdots,v_{l_w}^\mathcal{W}] \bm{\lambda},
    \end{aligned}
\end{equation}
where $\bm{\lambda}$ is an associated convex coefficient vector such that $\bm{\lambda} \geq 0, ~\bm{1}^\top \bm{\lambda} = 1$, and $[\bm{\lambda}]_i$ is the $i$-th element of the vector $\bm{\lambda}$.
From ${u}_{i}^\mathcal{W} \in \mathcal{U}$ in \eqref{eq: convex control policy condition}, $\hat{\pi}_{\mathrm{exp}}(\cdot)$ in \eqref{eq: convex control policy}, and \eqref{eq: x_exp in cvx comb}, we have that
\begin{equation}
    \begin{aligned}
        & \hat{\pi}_{\mathrm{exp}}(x_{k,\mathrm{exp}}^j) = {\scriptstyle\sum}_{i=1}^{l_w} [\bm{\lambda}]_i {u}_{i}^\mathcal{W} = [{u}_{1}^\mathcal{W},\cdots,{u}_{l_w}^\mathcal{W}] \bm{\lambda}.
    \end{aligned}
\end{equation}
This implies that $\hat{\pi}_{\mathrm{exp}}(x_{k,\mathrm{exp}}^j)$ can be written as a convex combination of ${u}_{i}^\mathcal{W}, ~i\in\{1,\cdots,l_w\}$.
As $\mathcal{U}$ is a convex set, the input constraint \eqref{subeq: input policy satisfies input constraint} is satisfied,i.e., $\hat{\pi}_{\mathrm{exp}}(x_{k,\mathrm{exp}}^j) \in \mathcal{U}$.
Furthermore, from $A {x}_{i}^\mathcal{W} + B {u}_{i}^\mathcal{W} + w \in \mathcal{SS}^{j-1}, ~  \forall w \in \mathcal{W}$ in \eqref{eq: convex control policy condition} and by convexity of the set $\mathcal{SS}^{j-1}$, we have that
\begin{equation}
    \begin{aligned}
        & ~~~ {\scriptstyle\sum}_{i=1}^{l_w} [\bm{\lambda}]_i (A {x}_{i}^\mathcal{W} + B {u}_{i}^\mathcal{W} + w) \\
        & = Ax_{k,\mathrm{exp}}^j +B\pi_{\mathrm{exp}}(x_{k,\mathrm{exp}}^j) + w  \in \mathcal{SS}^{j-1}, ~\forall w \in \mathcal{W},
    \end{aligned}
\end{equation}
which shows that $\pi_{\mathrm{exp}}(x_{k,\mathrm{exp}}^j)$ \eqref{eq: convex control policy} satisfies \eqref{subeq: terminal constraint}.
\end{proof}

\begin{remark} \label{rmk : replacing the control policy}
We replace the original control policy \(\pi_{\mathrm{exp}}(\cdot)\) with the parameterized policy \(\hat{\pi}_{\mathrm{exp}}(\cdot)\) \eqref{eq: convex control policy}, within the optimization problem \eqref{eq: exploration optimization ideal}. To compute \(\hat{\pi}_{\mathrm{exp}}(\cdot)\), we should solve for (a) the control inputs \({u}_{i}^\mathcal{W}\), \(i \in \{1, \dots, l_w\}\), that satisfy \eqref{eq: convex control policy condition}, and (b) the convex coefficients \(\bm{\lambda}\) that parameterize \(\hat{\pi}_{\mathrm{exp}}(\cdot)\) in \eqref{eq: convex control policy}.
However, solving for \(\bm{\lambda}\) is unnecessary. Since \(\mathcal{SS}^{j-1}\) and \(\mathcal{U}\) are convex, any convex combination of \eqref{subeq: terminal constraint hat} and \eqref{subeq: input policy satisfies input constraint hat} will automatically satisfy \eqref{subeq: terminal constraint} and \eqref{subeq: input policy satisfies input constraint hat}, respectively.
Thus, in the tractable reformulation of \eqref{eq: exploration optimization ideal}, \eqref{subeq: terminal constraint} and \eqref{subeq: input policy satisfies input constraint} are replaced with \eqref{eq: convex control policy condition} for \(i \in \{1, \dots, l_w\}\), where the decision variables are the control inputs \({u}_{i}^\mathcal{W}\).
\end{remark}


We tackle the \textbf{second challenge} by choosing a distance measure in \eqref{subeq: distance to the target set} which is linear on the decision variable $ \bm{d}_{\mathrm{exp}}$ and thus changing a non-convex problem \eqref{eq: exploration optimization ideal} into a convex problem. The distance measure we choose is defined as follows:
\begin{equation} \label{eq: distance measure definition}
\begin{aligned}
    & \mathrm{dist}({x}_{k,\mathrm{exp}}^j, \mathcal{SS}^{j-1}) = \min_{x_c} ||\Bar{x}_{k,\mathrm{exp}}^j-x_c||_2 \\
    & \qquad \qquad \qquad \qquad \quad ~~ \textnormal{s.t.,} ~ {\bm{\nu}^{j\star}_{k}}^\top H x_c = {\bm{\nu}^{j\star}_{k}}^\top h,  \\
    & \qquad \qquad \qquad \qquad \qquad ~~~~ \Bar{x}_{k,\mathrm{exp}}^j = A x_k^j + B ({u}^{\star} + \bm{d}_{\mathrm{exp}}),
\end{aligned}
\end{equation}
where $\bm{\nu}^{j\star}_{k}$ is a dual variable of \eqref{subeq: terminal constraints reform MPC} at time step $k$.
By solving the problem \eqref{eq: distance measure definition}, $\mathrm{dist}({x}_{k,\mathrm{exp}}^j, \mathcal{SS}^{j-1}) = {\bm{\nu}^{j\star}_{k}}^\top H B \bm{d}_{\mathrm{exp}}$. 
\begin{remark}
    If a non-zero dual variable $\bm{\nu}^{j\star}_{k} \neq \bm{0}$ exists in \eqref{subeq: terminal constraints reform MPC}, at least one constraint in \eqref{subeq: terminal constraints reform MPC} is active, with non-zero elements of $\bm{\nu}^{j\star}_{k}$ corresponding to these active constraints, due to dual feasibility and complementary slackness of the KKT conditions \cite{boyd2004convex}. The active constraints are then given by ${\bm{\nu}^{j\star}_{k}}^\top H x_c = {\bm{\nu}^{j\star}_{k}}^\top h$, as in \eqref{eq: distance measure definition}. Thus, the problem in \eqref{subeq: terminal constraints reform MPC} is to compute the distance from the state $\Bar{x}_{k,\mathrm{exp}}^j$ to the active constraint, as shown in Fig. \ref{fig:exploration cost clarification}.
    \begin{figure}[H]
    \begin{center}
    \includegraphics[width=0.6\linewidth,keepaspectratio]{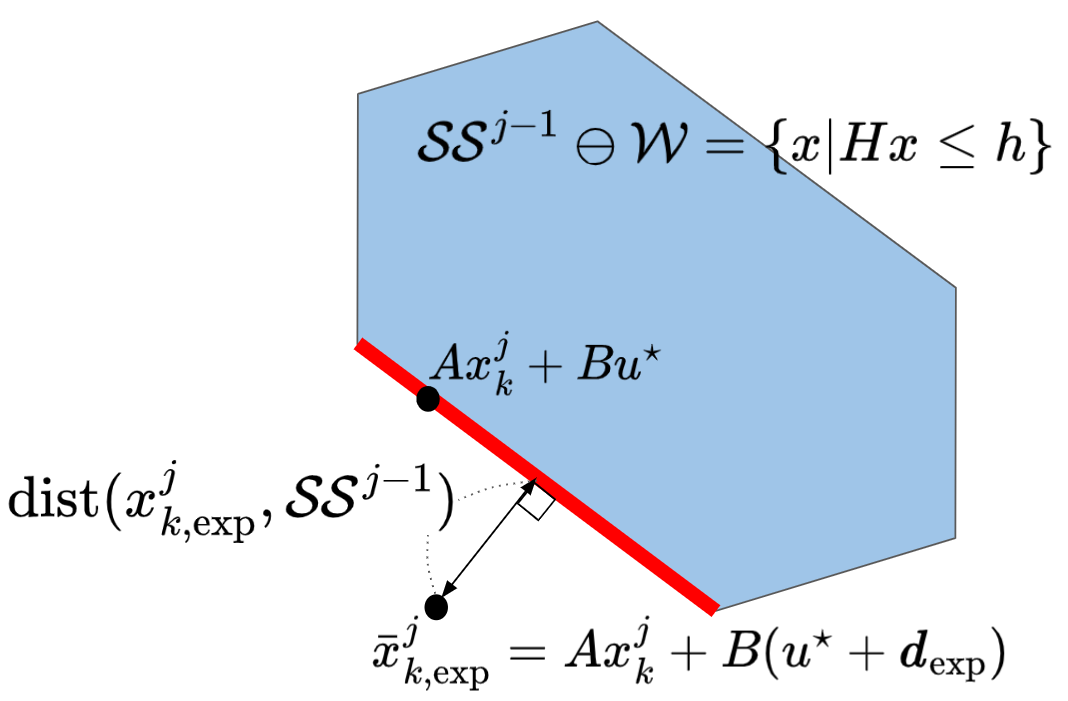}
    \caption{A two-dimensional example of the distance measure \eqref{eq: distance measure definition}. The light blue polytope is the nominal safe set $\mathcal{SS}^{j-1} \ominus \mathcal{W} = \{x | H x \leq h\}$ in \eqref{subeq: terminal constraints reform MPC}. The red line represents the active constraint.}
    \label{fig:exploration cost clarification}
    \end{center}
    \end{figure}
\end{remark}

In summary, if there exists a non-zero dual variable of \eqref{subeq: terminal constraints reform MPC}, i.e., $\bm{\nu}^{j\star}_{k} \neq \bm{0}$, the exploration problem \eqref{eq: exploration optimization ideal} is reformulated by replacing the control policy $\pi_{\mathrm{exp}}(\cdot)$ with $\hat{\pi}_{\mathrm{exp}}(\cdot)$ in \eqref{eq: convex control policy} as described in Remark \ref{rmk : replacing the control policy}, explicitly calculating the distance measure as in \eqref{eq: distance measure definition}, and robustifying the constraints. The tractable reformulation of \eqref{eq: exploration optimization ideal} for $x_{k}^j$ is given as follows:
\begin{equation} \label{eq: exploration optimization}
    \begin{aligned}
    & \hat{d}_{\mathcal{SS}^j}(x_k^j,{u}^{\star}) = \\
    & \max_{\substack{\bm{d}_{\mathrm{exp}}, \\ {u}_{1:l_w}^\mathcal{W}}} ~~ {\bm{\nu}^{j\star}_{k}}^\top H B \bm{d}_{\mathrm{exp}} \\
    &  ~~ \textnormal{s.t.,} ~~ \nabla f_k^j({u}^{\star})^\top {\bm{d}_{\mathrm{exp}}} \leq 0  \\
    & \quad \quad  \, ~~ \Bar{x}_{k,\mathrm{exp}}^j = A x_k^j + B ({u}^{\star} + \bm{d}_{\mathrm{exp}}), \\
    & \quad \quad  \, ~~\Bar{x}_{k,\mathrm{exp}}^j \in \mathcal{X}\ominus \mathcal{W}, ~{u}^{\star} + \bm{d}_{\mathrm{exp}} \in \mathcal{U}, \\
    & \quad \quad  \, ~~A {x}_{i}^\mathcal{W} + B {u}_{i}^\mathcal{W} \in \mathcal{SS}^{j-1} \ominus \mathcal{W}, \\
    & \quad \quad  \, ~~{x}_{i}^\mathcal{W} = A x_k^j + B ({u}^{\star} + \bm{d}_{\mathrm{exp}}) + v_i^\mathcal{W},\\
    & \quad \quad  \, ~~ {u}_{i}^\mathcal{W} \in \mathcal{U}, \\
    & \quad \quad  \, ~~ i\in \{1,\cdots,l_w\}.
    \end{aligned}
\end{equation}

When none of the constraints in \eqref{subeq: terminal constraints reform MPC} are active, the dual variable \( \bm{\nu}^{j\star}_{k} \) becomes zero. Then, the objective function of \eqref{eq: exploration optimization} evaluates to zero, offering no information about the distance to \( \mathcal{SS}^j \). In such cases, we explore within \( \mathcal{SS}^j \) by randomly sampling states around $x_k^j$ rather than solving \eqref{eq: exploration optimization} as follows:
\begin{equation} \label{eq: sampling in SS}
    x_i^{\mathcal{W}\star} \sim p_\text{sample}(x), ~i\in \{1,\cdots,l_w\}.
\end{equation}
where $p_\text{sample}(x)$ is a truncated multi-dimensional Gaussian distribution with mean vector \( x_k^j \) and covariance matrix equal to the identity matrix, truncated to the support \( \mathcal{SS}^j \). 
Later, we will show in Proposition \ref{prop: SS0 is rpi} and Corollary \ref{prop: piQ is feasible} that \eqref{eq:MPC discrete} is feasible for all \( x \in \mathcal{SS}^j \). This implies that  \eqref{eq:MPC discrete} is feasible at \( x_i^{\mathcal{W}\star} \) \eqref{eq: sampling in SS}.

After solving \eqref{eq: exploration optimization} or sampling using \eqref{eq: sampling in SS} for $x_{k}^j$ for all $k \in \{0,\cdots,T^j\}$, we determine states to be explored as follows:
\begin{equation} \label{eq: exp state matrix}
    \mathbf{X}_{k,\mathrm{exp}}^j = [{x}_{1}^{\mathcal{W}\star}, \cdots, ~{x}_{l_w}^{\mathcal{W}\star}].
\end{equation}
The states \eqref{eq: exp state matrix} are utilized to update the safe set $\mathcal{SS}^{j-1}$, which will be introduced in Sec. \ref{sec: value function update}.

To calculate the corresponding input to each explored state in $\mathbf{X}_{k,\mathrm{exp}}^j$, we solve \eqref{eq:MPC discrete} for each element in $\mathbf{X}_{k,\mathrm{exp}}^j$ as the initial condition and collect the optimal inputs as follows:
\begin{equation} \label{eq: exp input matrix}
\begin{aligned}
    & \mathbf{U}_{k,\mathrm{exp}}^j = [\pi^{j\star}({x}_{1}^{\mathcal{W}\star}), \cdots, \pi^{j\star}({x}_{l_w}^{\mathcal{W}\star})],
\end{aligned}
\end{equation}
for all $k \in \{0,\cdots,T^j\}$.
Remind that $\pi^{j\star}(\cdot)$ denotes the MPC policy \eqref{eq:MPC policy}. 
All elements in $\mathbf{X}_{k,\mathrm{exp}}^j$ are feasible points of the problem \eqref{eq:MPC discrete} because the optimal solution ${u}_{i}^\mathcal{W}$, where $i \in \{1,\cdots,l_w\}$, is a feasible input of the problem \eqref{eq:MPC discrete} for all elements in $\mathbf{X}_{k,\mathrm{exp}}^j$.

When solving \eqref{eq:MPC discrete} to construct $\mathbf{U}_{k,\mathrm{exp}}^j$ \eqref{eq: exp input matrix}, we collect the corresponding optimal cost of \eqref{eq:MPC discrete} as follows:
\begin{equation} \label{eq: exp cost matrix}
\begin{aligned}
    & \mathbf{J}_{k,\mathrm{exp}}^j = [\hat{V}^{j}({x}_{1}^{\mathcal{W}\star}), \cdots, \hat{V}^{j}({x}_{l_w}^{\mathcal{W}\star})],
\end{aligned}
\end{equation}
for all $k \in \{0,\cdots,T^j\}$.
\subsection{Update the safe set $\mathcal{SS}^j$ and the value function $Q^j(\cdot)$} \label{sec: value function update}
After the $j$-th task and the exploration are completed, $\mathcal{SS}^{j-1}$ is updated.
We augment the state and input data as follows:
\begin{equation} \label{eq: state data}
\begin{split}
    & \mathbf{X}^{j} = [\mathbf{X}^{j-1}, \underbrace{x_0^j, x_1^j,\cdots, x_{T^j}^j}_\text{Online}, \underbrace{\mathbf{X}_{0,\mathrm{exp}}^j,\cdots,  ~\mathbf{X}_{T^j,\mathrm{exp}}^j}_\text{From exploration}],
\end{split}
\end{equation}
\begin{equation} \label{eq: input data}
\begin{split}
    & \mathbf{U}^{j} = [\mathbf{U}^{j-1}, \pi^{j\star}(x_0^j),\cdots, \pi^{j\star}(x_{T^j}^j),  \mathbf{U}_{0,\mathrm{exp}}^j,\cdots,  ~\mathbf{U}_{T^j,\mathrm{exp}}^j].
\end{split}
\end{equation}
Moreover, the cost data is updated by augmenting the optimal costs of \eqref{eq:MPC discrete} corresponds to the state and input data as follows:
\begin{equation} \label{eq: cost data}
    \begin{aligned}
        & \mathbf{J}^{j} = [\mathbf{J}^{j-1}, \hat{V}^{j}(x_0^j),\cdots, \hat{V}^{j}(x_{T^j}^j),  \mathbf{J}_{0,\mathrm{exp}}^j,\cdots, \mathbf{J}_{T^j,\mathrm{exp}}^j].
    \end{aligned}
\end{equation}
We update $\mathcal{SS}^{j}$ by taking the convex hull operation of all elements in $\mathbf{X}^{j}$ as follows:
\begin{equation} \label{eq: terminal constraint update}
\begin{split}
    & \mathcal{SS}^{j} = \mathrm{conv}\Biggl\{\bigcup_{i = 1}^{|\mathbf{X}^{j}|} [\mathbf{X}^{j}]_i \Biggr\},
\end{split}
\end{equation}
where $|\mathbf{X}^{j}|$ is the number of states stored in the state matrix $\mathbf{X}^{j}$, i.e., $\mathbf{X}^{j} \in \mathbb{R}^{n_x \times |\mathbf{X}^{j}|}$. Note that as we augment the state data matrix as shown in \eqref{eq: state data}, the convex hull operation of them will continue to expand, i.e., $\mathcal{SS}^{j-1} \subset \mathcal{SS}^{j}$.

We update $Q^{j}(\cdot)$ by solving the following problem:
\begin{equation} \label{eq: value function update}
\begin{split}
    & Q^{j}(x) = \min_{\bm{\lambda}^{j} \in \mathbb{R}^{|\mathbf{X}^{j}|}}~ \mathbf{J}^{j} \bm{\lambda}^{j} \\
    & \qquad \qquad ~~ \textnormal{s.t.,} ~~~~ \mathbf{X}^{j} \bm{\lambda}^{j} = x, \\
    & \qquad \qquad \qquad ~~~~ \, \bm{\lambda}^{j} \geq \bm{0}, ~ \bm{1}^\top \bm{\lambda}^{j} = 1.
\end{split}
\end{equation}
\( Q^j(x) \) represents the minimum cost for state \( x \) in the \( j \)-th scenario. The vector \( \bm{\lambda}^j \) serves as convex coefficients, ensuring a valid convex combination of outcomes. The optimization minimizes the cost \( \mathbf{J}^j \bm{\lambda}^j \) while ensuring that the outcomes \( \mathbf{X}^j \bm{\lambda}^j \) correspond to the given state \( x \). The minimum convex coefficients are chosen to improve performance over iterations, as shown in Proposition 4.
$Q^{j}(x)$ is a convex, piecewise affine function as it is an optimal objective function of the multi-parametric linear program \cite{borrelli2017predictive}. 
When calculating the optimal input in \eqref{eq:MPC discrete} at episode $j$, the function $Q^j(x)$ can be incorporated into the problem \eqref{eq:MPC discrete} and form a single unified optimization problem. 

The function $Q^{j}(x)$ is defined on $\mathcal{SS}^{j}$ because of $\mathbf{X}^{j} \bm{\lambda}^{j} = x$ in \eqref{eq: value function update}. This constraint restricts its domain to the states expressable as a convex combination of the recorded states in \eqref{eq: state data}, which is equal to $\mathcal{SS}^{j}$ by construction \eqref{eq: terminal constraint update}. 

Since $Q^{j}(x)$ is defined on the bounded domain $\mathcal{SS}^{j}$ and is piecewise affine, continuous, and bounded above, it is Lipschitz continuous in $\mathcal{SS}^{j}$ defined as follows:
\begin{equation} \label{eq: lip con Qj}
    \lVert Q^j(x) - Q^j(y) \rVert \leq L^j \lVert x - y \rVert, ~\forall x, \ y \in \mathcal{SS}^j,
\end{equation}
where $L^j > 0$ is a Lipschitz constant.

Let $\bm{\lambda}^{j\star}$ denote the optimal solution of \eqref{eq: initial value function} when $j=0$ and of \eqref{eq: value function update} when $j \geq 1$.
We define the control policy $\pi^{j}_\mathrm{Q}(\cdot)$ is defined for all $x \in \mathcal{SS}^{j}$ as follows:
\begin{equation} \label{eq: safe policy}
\begin{split}
    & \pi^{j}_\mathrm{Q}(x) = \mathbf{U}^{j} \bm{\lambda}^{j\star},
\end{split}
\end{equation}
which will be used to prove the robust positive invariance of the safe set $\mathcal{SS}^j$ and the pointwise non-increasing property of the function $Q^j(\cdot)$ for all $x \in \mathcal{SS}^j$.

\subsection{Properties of $\mathcal{SS}^j$ and $Q^j(\cdot)$} \label{subsec: properties of SS and Q}
We introduce properties of the safe set $\mathcal{SS}^j$ \eqref{eq: terminal constraint update} and the value function $Q^j(\cdot)$ \eqref{eq: value function update}. 
\begin{proposition} \label{prop: SS0 is rpi}
(Robust Positive Invariance of $\mathcal{SS}^{j}$) Let Assumptions \ref{assum: bounded noise}-\ref{assum: initial solution} hold.
Then, $\forall j \geq 0,$ $\mathcal{SS}^{j}$ is a robust positive invariant set for the closed-loop system \eqref{eq:system} controlled by a policy $\pi^{j}_\mathrm{Q}(\cdot)$ \eqref{eq: safe policy}.
\end{proposition}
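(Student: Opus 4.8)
The plan is to establish robust positive invariance of $\mathcal{SS}^{j}$ by induction on $j$, using the policy $\pi^{j}_\mathrm{Q}(\cdot)$ defined in \eqref{eq: safe policy} as the witness. The base case $j=0$ should follow from the tube-MPC construction in Section~\ref{sec: initialization}: $\mathcal{SS}^{0}$ is the convex hull of the target set $\mathcal{O}$ and the tube trajectory $\mathcal{E}_{0:T+1}$, and the stored inputs $\mathbf{U}^0$ steer each tube $\mathcal{E}_k$ robustly into $\mathcal{E}_{k+1}\subseteq\mathcal{SS}^0$ (with $\mathcal{E}_{T+1}$, and $\mathcal{O}$ itself via the gain $K$, mapping into $\mathcal{O}\subseteq\mathcal{SS}^0$). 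This gives, for every vertex $v=[\mathbf{X}^0]_i$, an input $u=[\mathbf{U}^0]_i$ with $Av+Bu+w\in\mathcal{SS}^0$ for all $w\in\mathcal{W}$, and Appendix~\ref{sec: tube MPC initialize} should provide the details.

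The inductive step is the main content. Fix an arbitrary $x\in\mathcal{SS}^{j}$. By construction \eqref{eq: terminal constraint update}, $x=\mathbf{X}^{j}\bm{\lambda}^{j\star}$ for the optimal convex coefficient vector $\bm{\lambda}^{j\star}$ of \eqref{eq: value function update}, i.e. $x=\sum_i [\bm{\lambda}^{j\star}]_i \,[\mathbf{X}^{j}]_i$, and $\pi^{j}_\mathrm{Q}(x)=\mathbf{U}^{j}\bm{\lambda}^{j\star}=\sum_i[\bm{\lambda}^{j\star}]_i\,[\mathbf{U}^{j}]_i$. The key claim is that \emph{each} column $[\mathbf{X}^{j}]_i$, paired with its stored input $[\mathbf{U}^{j}]_i$, satisfies $A[\mathbf{X}^{j}]_i + B[\mathbf{U}^{j}]_i + w \in \mathcal{SS}^{j}$ for all $w\in\mathcal{W}$. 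Granting this claim, one writes
\begin{equation*}
    A x + B\pi^{j}_\mathrm{Q}(x) + w = \sum_i [\bm{\lambda}^{j\star}]_i\big(A[\mathbf{X}^{j}]_i + B[\mathbf{U}^{j}]_i + w\big),
\end{equation*}
a convex combination of points each lying in $\mathcal{SS}^{j}$, which therefore lies in $\mathcal{SS}^{j}$ by convexity of $\mathcal{SS}^{j}$ (it is a convex hull), completing the step. Note this argument also implicitly shows $\pi^{j}_\mathrm{Q}(x)\in\mathcal{U}$, since each $[\mathbf{U}^{j}]_i\in\mathcal{U}$ and $\mathcal{U}$ is convex.

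It remains to verify the per-column claim, and this is where I expect the real work to be, because the columns of $\mathbf{X}^{j}$ come from three distinct sources in \eqref{eq: state data}: (a) inherited columns from $\mathbf{X}^{j-1}$; (b) online closed-loop states $x_0^j,\dots,x_{T^j}^j$ of episode $j$; and (c) explored states $\mathbf{X}^j_{k,\mathrm{exp}}$. For (a), the inductive hypothesis gives $A[\mathbf{X}^{j-1}]_i+B[\mathbf{U}^{j-1}]_i+w\in\mathcal{SS}^{j-1}\subseteq\mathcal{SS}^{j}$ — here one must check that the \emph{stored} input $[\mathbf{U}^{j-1}]_i$ is the one that worked, i.e. that the update is consistent, or re-derive a valid input via $\pi^{j-1}_\mathrm{Q}$; monotonicity $\mathcal{SS}^{j-1}\subset\mathcal{SS}^{j}$ from \eqref{eq: terminal constraint update} then finishes. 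For (b), each $x_k^j$ is a closed-loop state of \eqref{eq:MPC discrete}, whose terminal constraint $\bar{x}_{k+1}\in\mathcal{SS}^{j-1}\ominus\mathcal{W}$ together with the stored optimal input $\pi^{j\star}(x_k^j)$ yields $Ax_k^j+B\pi^{j\star}(x_k^j)+w\in\mathcal{SS}^{j-1}\subseteq\mathcal{SS}^{j}$ for all $w\in\mathcal{W}$ by the definition of the Pontryagin difference; for the terminal state $x_{T^j}^j\in\mathcal{O}$ one uses Assumption~\ref{assum: Robust Positive Invariance} and the gain $K$. For (c), the explored states are, by the exploration construction and Proposition~\ref{prop: convex policy is valid} (equivalently the feasibility of \eqref{eq: exploration optimization}/\eqref{eq: sampling in SS}), feasible points of \eqref{eq:MPC discrete}, so the same terminal-constraint argument as in (b) applies with the stored input from $\mathbf{U}^j_{k,\mathrm{exp}}$ in \eqref{eq: exp input matrix}. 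The subtlety to be careful about throughout is that the input stored alongside each state in $\mathbf{U}^{j}$ must be exactly an input realizing the terminal constraint for that state — essentially a bookkeeping/consistency check on the data-matrix updates \eqref{eq: state data}–\eqref{eq: input data} — and that $x\in\mathcal{SS}^j$ may have multiple convex representations, but any optimal $\bm{\lambda}^{j\star}$ suffices since the argument uses only that it is a valid convex combination.
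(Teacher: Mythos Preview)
Your proposal is correct and follows essentially the same route as the paper's proof in Appendix~\ref{sec: proof of rpi}: induction on $j$, with the base case handled by the tube-MPC construction (the paper invokes \cite[Proposition~1]{rosolia2021robust} where you sketch the direct argument), and the inductive step broken into the same three column classes (inherited data, online closed-loop states, explored states), each shown to land in $\mathcal{SS}^{j-1}\subset\mathcal{SS}^{j}$ via the terminal constraint of \eqref{eq:MPC discrete} or the induction hypothesis, followed by the convex-combination argument. The subtlety you flag in case~(a)---that the induction hypothesis as literally stated concerns $\pi^{j-1}_\mathrm{Q}$ rather than the per-column property $A[\mathbf{X}^{j-1}]_i+B[\mathbf{U}^{j-1}]_i+w\in\mathcal{SS}^{j-1}$---is real and is glossed over in the paper as well; the clean fix (which both arguments implicitly rely on) is to carry the per-column property itself through the induction, since it is established column-by-column at each step and implies the stated RPI property by convexity.
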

\begin{proof}
See the Appendix \ref{sec: proof of rpi}.
\end{proof}
\begin{corollary}\label{prop: piQ is feasible}
Let Assumptions \ref{assum: bounded noise}-\ref{assum: initial solution} hold.
\( \pi^{j}_\mathrm{Q}(x) \) is a feasible solution to the MPC problem \eqref{eq:MPC discrete} for all $x \in \mathcal{SS}^{j}$.
\end{corollary}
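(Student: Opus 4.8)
The plan is to exhibit $\pi^{j}_\mathrm{Q}(x)$ as an explicit feasible point of the MPC problem \eqref{eq:MPC discrete} solved at the next episode, i.e., with terminal set $\mathcal{SS}^{j}$ and terminal cost $Q^{j}(\cdot)$, and to verify each constraint in turn. Fix $x \in \mathcal{SS}^{j}$ and let $\bm{\lambda}^{j\star}$ be the optimizer of \eqref{eq: value function update} at $x$, so that $\pi^{j}_\mathrm{Q}(x) = \mathbf{U}^{j}\bm{\lambda}^{j\star}$, $\mathbf{X}^{j}\bm{\lambda}^{j\star} = x$, $\bm{\lambda}^{j\star}\geq\bm 0$, $\bm 1^\top\bm{\lambda}^{j\star} = 1$. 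Taking $x_k = x$ and $u_k = \pi^{j}_\mathrm{Q}(x)$, the constraints of \eqref{eq:MPC discrete} to be checked are the state constraint $x_k\in\mathcal{X}$, the input constraint $u_k\in\mathcal{U}$, and the robustified terminal constraint $\bar x_{k+1} = Ax_k + Bu_k \in \mathcal{SS}^{j}\ominus\mathcal{W}$.

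First I would establish $\mathcal{SS}^{j}\subseteq\mathcal{X}$ and that every column of $\mathbf{U}^{j}$ lies in $\mathcal{U}$, by induction on $j$ with a case split over the three data sources in \eqref{eq: state data}--\eqref{eq: input data}: the initialization data (vertices of $\mathcal{O}\subseteq\mathcal{X}$ and of the tubes $\mathcal{E}_k$, which lie in $\mathcal{X}$ by robust constraint satisfaction of the tube MPC, with the corresponding inputs in $\mathcal{U}$); the online closed-loop data, which satisfies \eqref{eq:constraints} by construction of \eqref{eq:MPC discrete} and \eqref{eq:MPC policy}; and the explored data, where $\bar x^{j}_{k,\mathrm{exp}}\in\mathcal{X}\ominus\mathcal{W}\subseteq\mathcal{X}$, $u_i^{\mathcal{W}}\in\mathcal{U}$ by \eqref{eq: exploration optimization}, and the attached inputs solve \eqref{eq:MPC discrete} hence lie in $\mathcal{U}$. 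Since $\mathcal{X}$ and $\mathcal{U}$ are polyhedra, hence convex, $x = \mathbf{X}^{j}\bm{\lambda}^{j\star}\in\mathrm{conv}\{\text{columns of }\mathbf{X}^{j}\} = \mathcal{SS}^{j}\subseteq\mathcal{X}$ and $u_k = \mathbf{U}^{j}\bm{\lambda}^{j\star}\in\mathcal{U}$, so the first two constraints hold.

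For the terminal constraint I would invoke Proposition \ref{prop: SS0 is rpi}: $\mathcal{SS}^{j}$ is robust positive invariant for \eqref{eq:system} in closed loop with $\pi^{j}_\mathrm{Q}(\cdot)$, so $Ax + B\pi^{j}_\mathrm{Q}(x) + w \in \mathcal{SS}^{j}$ for every $w\in\mathcal{W}$. By the definition of the Pontryagin difference this is exactly $\bar x_{k+1} = Ax + B\pi^{j}_\mathrm{Q}(x)\in\mathcal{SS}^{j}\ominus\mathcal{W}$. Combining the three verifications shows that $(x_k, u_k) = (x, \pi^{j}_\mathrm{Q}(x))$ satisfies every constraint of \eqref{eq:MPC discrete}, so $\pi^{j}_\mathrm{Q}(x)$ is feasible for all $x\in\mathcal{SS}^{j}$.

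There is no genuine obstacle here: the corollary is essentially a restatement of Proposition \ref{prop: SS0 is rpi}, after rewriting "robust positive invariance" in the Pontryagin-difference form of the terminal constraint; the only care required is the bookkeeping that all stored states and inputs respect $\mathcal{X}$ and $\mathcal{U}$, so their convex combinations do too, and since $\pi^{j}_\mathrm{Q}$ is by definition such a convex combination, $\pi^{j}_\mathrm{Q}(x)\in\mathcal{U}$ is immediate. Much of this bookkeeping is already performed inside the proof of Proposition \ref{prop: SS0 is rpi} in the appendix, so the corollary may alternatively be phrased as a one-line consequence of it.
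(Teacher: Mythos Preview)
Your proposal is correct and follows essentially the same route as the paper: invoke Proposition~\ref{prop: SS0 is rpi} for the terminal constraint (rewritten via the Pontryagin difference), use $\mathcal{SS}^{j}\subseteq\mathcal{X}$ for the state constraint, and use convexity of $\mathcal{U}$ together with the fact that $\pi^{j}_\mathrm{Q}(x)$ is a convex combination of stored inputs for the input constraint. The paper's proof is the ``one-line consequence'' you yourself anticipate at the end, simply asserting $\mathcal{SS}^{j}\subset\mathcal{X}$ and that the stored inputs lie in $\mathcal{U}$ without spelling out the inductive bookkeeping over data sources that you carry out explicitly.
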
 
\begin{proof}
The closed-loop system \eqref{eq:system}, controlled by \( \pi^{j}_\mathrm{Q}(x) \), remains within \( \mathcal{SS}^{j} \) for \( k \geq 0 \) since \( \mathcal{SS}^{j} \) is a robust positive invariant set for this system. Also, \( \mathcal{SS}^{j} \subset \mathcal{X} \), so the closed-loop system satisfies the state constraints in \eqref{eq:constraints}. Moreover, since \( \pi^{j}_\mathrm{Q}(\cdot) \) is a convex combination of inputs in \( \mathcal{U} \), and \( \mathcal{U} \) is a convex set as per \eqref{eq:constraints}, it follows that \( \pi^{j}_\mathrm{Q}(x) \in \mathcal{U} \). Thus, as all constraints in \eqref{eq:MPC discrete} are satisfied, the claim is proved.
\end{proof}

\begin{proposition} \label{prop: nonincreasing value function}
(Pointwise non-increasing of $Q^j(\cdot)$) Let Assumptions \ref{assum: bounded noise}-\ref{assum: initial solution} hold. 
Consider the value function $Q^j(\cdot)$ in \eqref{eq: value function update}.
$\forall j \geq 0$, the value function does not increase pointwise:
\begin{equation} \label{eq: decreasing}
\forall x\in\mathcal{SS}^{j}, ~ Q^{j}(x) \geq Q^{j+1}(x).
\end{equation}
\end{proposition}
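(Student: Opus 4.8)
The plan is to exploit the append-only structure of the data matrices. By construction \eqref{eq: state data}--\eqref{eq: cost data}, the first $|\mathbf{X}^j|$ columns of $\mathbf{X}^{j+1}$ coincide with $\mathbf{X}^j$ and the first $|\mathbf{X}^j|$ entries of $\mathbf{J}^{j+1}$ coincide with $\mathbf{J}^j$; in particular $\mathcal{SS}^j \subseteq \mathcal{SS}^{j+1}$ by \eqref{eq: terminal constraint update}, so $Q^{j+1}(x)$ is well defined for every $x \in \mathcal{SS}^j$. The idea is then that the program \eqref{eq: value function update} at episode $j+1$ is a relaxation of the same program at episode $j$ on the shared columns, so the optimizer of the episode-$j$ program, suitably zero-padded, is a feasible point of the episode-$(j+1)$ program achieving the same objective value.

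Concretely, fix $x \in \mathcal{SS}^j$ and let $\bm{\lambda}^{j\star}$ be an optimizer of \eqref{eq: value function update} at episode $j$ (it exists because, since $x \in \mathcal{SS}^j = \mathrm{conv}\{[\mathbf{X}^j]_i\}$, the feasible set -- the intersection of the probability simplex in $\mathbb{R}^{|\mathbf{X}^j|}$ with the affine set $\{\bm{\lambda}: \mathbf{X}^j \bm{\lambda} = x\}$ -- is nonempty and compact, and the objective is linear), so $Q^j(x) = \mathbf{J}^j \bm{\lambda}^{j\star}$. I would then define $\bar{\bm{\lambda}}^{j+1} = \big[(\bm{\lambda}^{j\star})^\top, \bm{0}^\top\big]^\top \in \mathbb{R}^{|\mathbf{X}^{j+1}|}$, where the trailing zero block has length $|\mathbf{X}^{j+1}| - |\mathbf{X}^j|$, corresponding exactly to the states appended in \eqref{eq: state data}. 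Because the leading block of $\mathbf{X}^{j+1}$ is $\mathbf{X}^j$, we get $\mathbf{X}^{j+1} \bar{\bm{\lambda}}^{j+1} = \mathbf{X}^j \bm{\lambda}^{j\star} = x$, while $\bar{\bm{\lambda}}^{j+1} \geq \bm{0}$ and $\bm{1}^\top \bar{\bm{\lambda}}^{j+1} = \bm{1}^\top \bm{\lambda}^{j\star} = 1$; hence $\bar{\bm{\lambda}}^{j+1}$ is feasible for \eqref{eq: value function update} at episode $j+1$, and by optimality $Q^{j+1}(x) \leq \mathbf{J}^{j+1} \bar{\bm{\lambda}}^{j+1}$. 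Evaluating the right-hand side and using that the leading block of $\mathbf{J}^{j+1}$ equals $\mathbf{J}^j$ by \eqref{eq: cost data}, we obtain $\mathbf{J}^{j+1} \bar{\bm{\lambda}}^{j+1} = \mathbf{J}^j \bm{\lambda}^{j\star} = Q^j(x)$, so $Q^{j+1}(x) \leq Q^j(x)$ for all $x \in \mathcal{SS}^j$; the case $j = 0$ is identical with \eqref{eq: initial value function} in place of \eqref{eq: value function update}.

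Assumptions \ref{assum: bounded noise}--\ref{assum: initial solution} enter only to guarantee that each episode terminates in finite time and that \eqref{eq:MPC discrete} is feasible along the closed-loop trajectory (via Proposition \ref{prop: SS0 is rpi} and Corollary \ref{prop: piQ is feasible}), so that the matrices $\mathbf{X}^j, \mathbf{J}^j, \mathbf{X}^{j+1}, \mathbf{J}^{j+1}$ and the minimization problems defining $Q^j$ and $Q^{j+1}$ are well posed. I do not expect a genuinely hard step here: the argument is a one-line feasibility-plus-relaxation observation, and the only thing requiring care is the bookkeeping of the block structure of $\mathbf{X}^{j+1}$ and $\mathbf{J}^{j+1}$ so that the zero-padded multiplier reproduces both the equality-constraint value $x$ and the objective value $Q^j(x)$ exactly. (Note that $\pi^j_\mathrm{Q}(\cdot)$ from \eqref{eq: safe policy} is not needed for this weak-monotonicity statement; the strict decrease at individual states comes separately from the newly appended, lower-cost data points.)
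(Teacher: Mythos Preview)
Your proof is correct and follows essentially the same approach as the paper: both arguments zero-pad the optimal multiplier $\bm{\lambda}^{j\star}$ to obtain a feasible point for the episode-$(j+1)$ program with identical objective value, then invoke optimality of $Q^{j+1}(x)$. Your version is slightly more explicit about the existence of the optimizer and the role of the assumptions, but the core mechanism is identical.
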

\begin{proof}
    For any $x \in \mathcal{SS}^j$, suppose $\bm{\lambda}^{j\star}$ is an optimal solution of \eqref{eq: value function update} at episode $j$. Then, from \eqref{eq: value function update}, we have that:
    \begin{equation*}
    \begin{aligned}
        & Q^j(x) = \mathbf{J}^j \bm{\lambda}^{j\star} 
    \end{aligned}
    \end{equation*}

    Note that we augment the state and cost data matrices, $\mathbf{X}^j$ and $\mathbf{J}^j$, to define $\mathbf{X}^{j+1}$ and $\mathbf{J}^{j+1}$ as in \eqref{eq: state data} and \eqref{eq: cost data}, respectively. Consequently, we define $\bm{\lambda}^{j+1}_\mathrm{cand} = [\bm{\lambda}^{j\star \top}, 0, 0, \dots, 0]^\top \in \mathbb{R}^{\mathrm{dim}(\mathbf{J}^{j+1})}$, ensuring that $x = \mathbf{X}^j \bm{\lambda}^{j\star} = \mathbf{X}^{j+1} \bm{\lambda}^{j+1}_\mathrm{cand}$ and $\mathbf{J}^j \bm{\lambda}^{j\star} = \mathbf{J}^{j+1}\bm{\lambda}^{j+1}_\mathrm{cand}$.
    Moreover, $\bm{\lambda}^{j+1}_\mathrm{cand} \geq \bm{0}$ and $\bm{1}^\top \bm{\lambda}^{j+1}_\mathrm{cand} = 1$, implying that $\bm{\lambda}^{j+1}_\mathrm{cand}$ is a feasible solution to the problem in \eqref{eq: value function update} at episode $j+1$.
    As $Q^{j+1}(x)$ is the optimal cost function of \eqref{eq: value function update} at episode $j+1$. we have the following:
    \begin{equation} \label{eq: Qj > Qj1}
    \begin{aligned}
        & Q^j(x) = \mathbf{J}^j \bm{\lambda}^{j\star}  =  \mathbf{J}^{j+1}\bm{\lambda}^{j+1}_\mathrm{cand} \geq Q^{j+1}(x).
    \end{aligned}
    \end{equation}
    This proves the claim.
\end{proof}

\begin{proposition} \label{prop: (17) hold for all j}
(\eqref{eq: Q0 condition} can be generalized.) Let Assumptions \ref{assum: bounded noise}-\ref{assum: initial solution} hold. 
Consider $\mathbf{X}^{j}$ and $\mathbf{U}^{j}$ in \eqref{eq: state data} and \eqref{eq: input data}, $\mathbf{J}^{j}$ in \eqref{eq: cost data}, and $Q^j(\cdot)$ in \eqref{eq: value function update}.
Then, the following holds $\forall j \geq 0$:
\begin{equation} \label{eq: Qj condition}
    \begin{aligned}
        & [\mathbf{J}^{j}]_i \geq \ell([\mathbf{X}^{j}]_i,[\mathbf{U}^{j}]_i) \\
        & ~~~~~~~~~~~~~~~ + \sum_{m=1}^M p_m Q^j(A[\mathbf{X}^{j}]_i + B[\mathbf{U}^{j}]_i + \tilde{w}_m).
    \end{aligned}
\end{equation}
\end{proposition}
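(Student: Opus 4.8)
The plan is to prove \eqref{eq: Qj condition} by induction on the episode index $j$, splitting the columns of the augmented matrices $\mathbf{X}^j$, $\mathbf{U}^j$, $\mathbf{J}^j$ into the three blocks visible in \eqref{eq: state data}--\eqref{eq: cost data}: the columns inherited from episode $j-1$; the online closed-loop columns $x_0^j,\dots,x_{T^j}^j$; and the exploration columns stored in $\mathbf{X}_{k,\mathrm{exp}}^j$. The base case $j=0$ is precisely \eqref{eq: Q0 condition}, which holds by the construction of $\mathbf{J}^0$ detailed in Appendix \ref{sec: tube MPC initialize}; that same construction places every argument $A[\mathbf{X}^0]_i + B[\mathbf{U}^0]_i + \tilde w_m$ inside $\mathcal{SS}^0 = \mathrm{dom}(Q^0)$, so the right-hand side of \eqref{eq: Q0 condition} is well defined.

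For the inductive step I would assume \eqref{eq: Qj condition} at episode $j-1$ and first record the elementary facts to be reused: $p_m \ge 0$ and $\sum_{m=1}^M p_m = 1$, because each $\mu_m(w)$ in \eqref{eq: cvx sum with mu w} is a convex coefficient and $p_m = \mathbb{E}_w[\mu_m(w)]$ by \eqref{eq: pm}; each $\tilde w_m$ is a discretized disturbance, hence $\tilde w_m \in \mathcal{W}$; and $\mathcal{SS}^{j-1} \subseteq \mathcal{SS}^j$ by \eqref{eq: terminal constraint update}. For an online or exploration column, the stored cost is the optimal value $\hat V^{j}(\cdot)$ of \eqref{eq:MPC discrete} evaluated at that state and the stored input is the optimal $\pi^{j\star}(\cdot)$; writing $\bar x_{k+1} = A[\mathbf{X}^j]_i + B[\mathbf{U}^j]_i$, the terminal constraint of \eqref{eq:MPC discrete} gives $\bar x_{k+1} \in \mathcal{SS}^{j-1}\ominus\mathcal{W}$, so $\bar x_{k+1} + \tilde w_m \in \mathcal{SS}^{j-1}$, and by construction $[\mathbf{J}^j]_i = \hat V^{j}([\mathbf{X}^j]_i) = \ell([\mathbf{X}^j]_i,[\mathbf{U}^j]_i) + \sum_{m=1}^M p_m Q^{j-1}(\bar x_{k+1} + \tilde w_m)$. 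I would then apply Proposition \ref{prop: nonincreasing value function} at $\bar x_{k+1} + \tilde w_m \in \mathcal{SS}^{j-1}$ to get $Q^{j-1}(\bar x_{k+1}+\tilde w_m) \ge Q^{j}(\bar x_{k+1}+\tilde w_m)$, and multiply by $p_m \ge 0$ and sum to obtain \eqref{eq: Qj condition} for these columns.

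For an inherited column, $[\mathbf{X}^j]_i$, $[\mathbf{U}^j]_i$, $[\mathbf{J}^j]_i$ coincide with their episode-$(j-1)$ counterparts, so the induction hypothesis directly provides both that $A[\mathbf{X}^{j-1}]_i + B[\mathbf{U}^{j-1}]_i + \tilde w_m \in \mathcal{SS}^{j-1}$ (hence $Q^{j-1}$, and by the nesting $Q^{j}$, are defined there) and the inequality $[\mathbf{J}^{j-1}]_i \ge \ell([\mathbf{X}^{j-1}]_i,[\mathbf{U}^{j-1}]_i) + \sum_{m=1}^M p_m Q^{j-1}(A[\mathbf{X}^{j-1}]_i + B[\mathbf{U}^{j-1}]_i + \tilde w_m)$; combining this with Proposition \ref{prop: nonincreasing value function} exactly as above replaces $Q^{j-1}$ by $Q^{j}$ on the right-hand side, which closes the induction.

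The one point that requires care is the domain bookkeeping: verifying that for every stored column the argument $A[\mathbf{X}^j]_i + B[\mathbf{U}^j]_i + \tilde w_m$ indeed lies in $\mathcal{SS}^{j-1}$, which is needed both for the right-hand side of \eqref{eq: Qj condition} to be well defined and for Proposition \ref{prop: nonincreasing value function} to be invoked. For online and exploration columns this is immediate from the terminal constraint of \eqref{eq:MPC discrete} together with $\tilde w_m \in \mathcal{W}$; for inherited columns it is transported through the induction via $\mathcal{SS}^{j-1}\subseteq\mathcal{SS}^j$; and the $j=0$ anchor is supplied by the Appendix construction. Beyond this, the argument is only monotonicity of $Q^j$ in $j$ (Proposition \ref{prop: nonincreasing value function}) and non-negativity of the weights $p_m$, so I expect no further obstacle.
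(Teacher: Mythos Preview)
Your proposal is correct and follows essentially the same induction argument as the paper: base case from \eqref{eq: Q0 condition}, then split the columns of $\mathbf{X}^j,\mathbf{U}^j,\mathbf{J}^j$ into those inherited from episode $j-1$ and those newly appended, invoking Proposition~\ref{prop: nonincreasing value function} to pass from $Q^{j-1}$ to $Q^j$ in both cases. If anything, you are slightly more careful than the paper: you explicitly write the new-column identity $[\mathbf{J}^j]_i=\hat V^j([\mathbf{X}^j]_i)=\ell+\sum_m p_m Q^{j-1}(\cdot)$ before applying monotonicity (the paper's \eqref{eq: newly added cost data} writes $Q^j$ directly, eliding that step), and you verify the domain condition $A[\mathbf{X}^j]_i+B[\mathbf{U}^j]_i+\tilde w_m\in\mathcal{SS}^{j-1}$ needed for Proposition~\ref{prop: nonincreasing value function} to apply, which the paper leaves implicit.
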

\begin{proof}
    We will prove the claim by induction.
    For the base case, when $j=0$, the claim holds by \eqref{eq: Q0 condition}. 
    Now, use an induction step. Suppose for some $j-1 \geq 0$, the claim holds as follows:
    \begin{equation*}
        \begin{aligned}
            & [\mathbf{J}^{j-1}]_i \geq \ell([\mathbf{X}^{j-1}]_i,[\mathbf{U}^{j-1}]_i) \\
            & ~~~~~~~~~~~~ + {\scriptstyle\sum}_{m=1}^M p_m Q^{j-1}(A[\mathbf{X}^{j-1}]_i + B[\mathbf{U}^{j-1}]_i + \tilde{w}_m).
        \end{aligned}
    \end{equation*}
    
    The data matrices $\mathbf{X}^{j}$, $\mathbf{U}^{j}$, and $\mathbf{J}^{j}$ are constructed by adding new data to $\mathbf{X}^{j-1}$, $\mathbf{U}^{j-1}$, and $\mathbf{J}^{j-1}$ as described in \eqref{eq: state data}, \eqref{eq: input data}, and \eqref{eq: cost data}, respectively. Thus, for the existing parts, i.e., $i \leq |\mathbf{J}^{j-1}|$, $[\mathbf{X}^{j}]_i = [\mathbf{X}^{j-1}]_i$, $[\mathbf{U}^{j}]_i = [\mathbf{U}^{j-1}]_i$, and $[\mathbf{J}^{j}]_i = [\mathbf{J}^{j-1}]_i$. Moreover, from Proposition \ref{prop: nonincreasing value function}, $Q^{j-1}(x) \geq Q^j(x), ~\forall x \in \mathcal{SS}^{j-1}$. Thus, from the induction hypothesis, we have the following for $i \leq |\mathbf{J}^{j-1}|$:
    \begin{equation} \label{eq: existing data}
        \begin{aligned}
            & [\mathbf{J}^{j}]_i \geq \ell([\mathbf{X}^{j}]_i,[\mathbf{U}^{j}]_i) \\
            & ~~~~~~~~~~~~ + {\scriptstyle\sum}_{m=1}^M p_m Q^{j}(A[\mathbf{X}^{j}]_i + B[\mathbf{U}^{j}]_i + \tilde{w}_m).
        \end{aligned}
    \end{equation} 
    
    For the newly added parts, i.e., $i > |\mathbf{J}^{j-1}|$, the following holds because they are the optimal cost of \eqref{eq:MPC discrete} and the associated $[\mathbf{U}^{j}]_i$ is the optimal input of \eqref{eq:MPC discrete} at the state $[\mathbf{X}^{j}]_i$:
    \begin{equation} \label{eq: newly added cost data}
        \begin{aligned}
            & [\mathbf{J}^{j}]_i = \ell([\mathbf{X}^{j}]_i,[\mathbf{U}^{j}]_i) \\
            & ~~~~~~~~~~~~ + {\scriptstyle\sum}_{m=1}^M p_m Q^{j}(A[\mathbf{X}^{j}]_i + B[\mathbf{U}^{j}]_i + \tilde{w}_m).
        \end{aligned}
    \end{equation}

    From \eqref{eq: existing data} and \eqref{eq: newly added cost data}, the claim is proved by induction.
\end{proof}

\section{Controller Properties}
In this section, we show the main properties of the proposed control algorithm.

\subsection{Feasibility} \label{subsec: feasibility}
First, we prove the recursive feasibility of the MPC \eqref{eq:MPC discrete}.
\begin{theorem} \label{thm: recursive feasibility}
Let Assumptions \ref{assum: bounded noise}-\ref{assum: initial solution} hold.
Then, for all time steps $k \geq 0$ and all episodes $j \geq 1$, the MPC \eqref{eq:MPC discrete} is feasible if the state $x_k^j$ is obtained by applying the MPC policy \eqref{eq:MPC policy} to the system \eqref{eq:system} starting from the initial state $x_S$. 
\end{theorem}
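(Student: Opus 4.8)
The plan is to prove recursive feasibility by induction on the time step $k$ within a fixed episode $j \geq 1$, using the key structural facts established earlier: the robust positive invariance of $\mathcal{SS}^{j-1}$ (Proposition~\ref{prop: SS0 is rpi}), the feasibility of the policy $\pi^{j-1}_\mathrm{Q}(\cdot)$ for \eqref{eq:MPC discrete} at every $x \in \mathcal{SS}^{j-1}$ (Corollary~\ref{prop: piQ is feasible}), and the monotone inclusion $\mathcal{SS}^{j-2} \subset \mathcal{SS}^{j-1}$. First I would establish the base case: at $k=0$, the state is $x_0^j = x_S$. I need to argue $x_S \in \mathcal{SS}^{j-1}$, which follows because $x_S$ is a data point stored in $\mathbf{X}^0$ (it is the initial state of the tube-MPC episode, hence $\bar x_0^{0\star} = x_S$ and $x_S \in \mathcal{E}_0 \subset \mathcal{SS}^0$), and $\mathcal{SS}^0 \subseteq \mathcal{SS}^{j-1}$ by the nesting property. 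Once $x_0^j \in \mathcal{SS}^{j-1}$, Corollary~\ref{prop: piQ is feasible} gives a feasible input (namely $\pi^{j-1}_\mathrm{Q}(x_0^j)$) for \eqref{eq:MPC discrete}, so the MPC is feasible at $k=0$.

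For the inductive step, suppose \eqref{eq:MPC discrete} is feasible at $x_k^j$ and $u_k^j = \pi^{j\star}(x_k^j)$ is applied to \eqref{eq:system}. The terminal constraint $\bar x_{k+1} \in \mathcal{SS}^{j-1} \ominus \mathcal{W}$ in \eqref{eq:MPC discrete}, together with $x_{k+1}^j = \bar x_{k+1} + w_k^j$ and $w_k^j \in \mathcal{W}$, forces $x_{k+1}^j \in \mathcal{SS}^{j-1}$ for every realization of the disturbance — this is the whole point of the Pontryagin-difference robustification. Then again by Corollary~\ref{prop: piQ is feasible}, \eqref{eq:MPC discrete} is feasible at $x_{k+1}^j$. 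This closes the induction and proves feasibility for all $k \geq 0$ within episode $j$. Since the argument only used $j \geq 1$ (so that $\mathcal{SS}^{j-1}$ and $Q^{j-1}(\cdot)$ are well-defined from the previous episode) and the hypotheses of Corollary~\ref{prop: piQ is feasible}, it holds for all $j \geq 1$.

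The main obstacle I anticipate is the base case, specifically pinning down $x_S \in \mathcal{SS}^{j-1}$ in a clean way — one must be careful that the safe set actually contains the initial state from the very first episode onward, and that this persists under the updates \eqref{eq: terminal constraint update}. This requires the observation that $\mathcal{O} \subset \mathcal{SS}^j$ and $x_S \in \mathcal{SS}^0$ by construction in Sec.~\ref{sec: initialization}, plus the monotonicity $\mathcal{SS}^{j-1} \subset \mathcal{SS}^{j}$ noted after \eqref{eq: terminal constraint update}. Everything else is a direct consequence of the robust-invariance machinery and the Pontryagin-difference tightening, so the inductive step is essentially bookkeeping. I would also note in passing that feasibility of \eqref{eq:MPC discrete} is equivalent to feasibility of the original \eqref{eq:MPC} (the robustified problem being tighter), so recursive feasibility transfers to \eqref{eq:MPC} as well.
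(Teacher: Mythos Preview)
Your proposal is correct and follows essentially the same inductive argument as the paper: feasibility at $k$ plus the terminal constraint $\bar{x}_{k+1}\in\mathcal{SS}^{j-1}\ominus\mathcal{W}$ forces $x_{k+1}^j\in\mathcal{SS}^{j-1}$, and then Corollary~\ref{prop: piQ is feasible} supplies a feasible input at $k+1$. The only minor difference is the base case, where the paper simply invokes Assumption~\ref{assum: initial solution}, while you give the more explicit (and arguably cleaner) argument that $x_S\in\mathcal{E}_0\subset\mathcal{SS}^0\subseteq\mathcal{SS}^{j-1}$ and then apply Corollary~\ref{prop: piQ is feasible}.
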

\begin{proof}
We will prove the claim by induction.
For the base case, we prove that the problem \eqref{eq:MPC discrete} is feasible at $k=0, ~\forall j \geq 1$.
From Assumption \ref{assum: initial solution}, a feasible solution exists starting from $x_0^j = x_S$. 

Now use an induction step.
Suppose the problem \eqref{eq:MPC discrete} is feasible at some $k\geq 0$ at episode $j \geq 1$.
From the induction hypothesis, $Ax_k^j + B\pi^{j \star}(x_k^j) \in \mathcal{SS}^{j-1} \ominus \mathcal{W}$, which is the terminal constraint of \eqref{eq:MPC discrete} and implies $x_{k+1}^j \in \mathcal{SS}^{j-1}, ~\forall w_k^j \in \mathcal{W}$.
From Corollary \ref{prop: piQ is feasible}, $\pi^{j-1}_\mathrm{Q}(x_{k+1}^j)$ is a feasible solution to \eqref{eq:MPC discrete} at $k+1$ at $j$.
By induction, the claim is proved.
\end{proof}

Based on the Theorem \ref{thm: recursive feasibility}, we can prove the feasibility of the exploration problem \eqref{eq: exploration optimization}.
\begin{corollary} \label{cor: exp feasible}
Consider the exploration problem \eqref{eq: exploration optimization} where the argument is the state $x_k^j$, the primal optimal solution of the MPC problem in \eqref{eq:MPC discrete} is ${u}_k^{j \star}$, and the corresponding dual optimal solutions are $\bm{\nu}^{j\star}_{k}$ and $\bm{\gamma}^{j\star}_{k}$.
Let Assumptions \ref{assum: bounded noise}-\ref{assum: initial solution} hold.
Then the exploration problem \eqref{eq: exploration optimization} has a feasible solution.
\end{corollary}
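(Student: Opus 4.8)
The plan is to exhibit an explicit feasible point of \eqref{eq: exploration optimization}. The natural candidate for the descent direction is $\bm{d}_{\mathrm{exp}} = \bm{0}$: this trivially satisfies $\nabla f_k^j({u}^{\star})^\top \bm{0} = 0 \leq 0$, so the descent-direction constraint holds. With this choice, $\Bar{x}_{k,\mathrm{exp}}^j = A x_k^j + B {u}^{\star}$, and we must check that the remaining constraints admit a feasible completion, namely the state constraint $\Bar{x}_{k,\mathrm{exp}}^j \in \mathcal{X} \ominus \mathcal{W}$, the input constraint ${u}^{\star} + \bm{0} = {u}^{\star} \in \mathcal{U}$, and — the substantive part — the existence of inputs ${u}_i^\mathcal{W} \in \mathcal{U}$ for $i \in \{1,\dots,l_w\}$ such that $A {x}_i^\mathcal{W} + B {u}_i^\mathcal{W} \in \mathcal{SS}^{j-1} \ominus \mathcal{W}$ where ${x}_i^\mathcal{W} = A x_k^j + B {u}^{\star} + v_i^\mathcal{W}$.

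The first steps are immediate from the hypotheses. Since ${u}_k^{j\star} = {u}^{\star}$ is the primal optimal (hence feasible) solution of the MPC problem \eqref{eq:MPC discrete} at $x_k^j$ — which is well-defined by Theorem \ref{thm: recursive feasibility} — it satisfies the constraints of \eqref{eq:MPC discrete}: ${u}^{\star} \in \mathcal{U}$, and $\bar{x}_{k+1} = A x_k^j + B {u}^{\star} \in \mathcal{SS}^{j-1} \ominus \mathcal{W}$. The latter in particular gives $\Bar{x}_{k,\mathrm{exp}}^j \in \mathcal{SS}^{j-1} \ominus \mathcal{W} \subseteq \mathcal{X} \ominus \mathcal{W}$ (using $\mathcal{SS}^{j-1} \subseteq \mathcal{X}$, from Proposition \ref{prop: SS0 is rpi} together with $\mathbf{X}^{j-1} \subset \mathcal{X}$), so the state and input constraints on $\bm{d}_{\mathrm{exp}} = \bm{0}$ are met. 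It remains to produce the ${u}_i^\mathcal{W}$. Here I would observe that ${x}_i^\mathcal{W} = \bar{x}_{k+1} + v_i^\mathcal{W}$ with $\bar{x}_{k+1} \in \mathcal{SS}^{j-1} \ominus \mathcal{W}$ and $v_i^\mathcal{W} \in \mathcal{W}$; by definition of the Pontryagin difference this yields ${x}_i^\mathcal{W} \in \mathcal{SS}^{j-1}$. Then, since $\mathcal{SS}^{j-1}$ is robust positive invariant (Proposition \ref{prop: SS0 is rpi}), or more directly since every point of $\mathcal{SS}^{j-1}$ admits a feasible MPC solution (Corollary \ref{prop: piQ is feasible}), there exists ${u}_i^\mathcal{W} = \pi^{j-1}_\mathrm{Q}({x}_i^\mathcal{W}) \in \mathcal{U}$ with $A {x}_i^\mathcal{W} + B {u}_i^\mathcal{W} + w \in \mathcal{SS}^{j-1}$ for all $w \in \mathcal{W}$, i.e. $A {x}_i^\mathcal{W} + B {u}_i^\mathcal{W} \in \mathcal{SS}^{j-1} \ominus \mathcal{W}$. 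This verifies the last family of constraints, completing the feasible point.

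The main obstacle — really the only delicate point — is the logical dependency: this corollary is stated and used to justify that the ${u}_i^\mathcal{W}$ in \eqref{eq: convex control policy condition} exist, yet the argument above relies on Corollary \ref{prop: piQ is feasible} / Proposition \ref{prop: SS0 is rpi}, whose proof of robust positive invariance of $\mathcal{SS}^j$ in turn invokes the existence of the explored data points being feasible. I would make sure the induction is clean: Proposition \ref{prop: SS0 is rpi} should be read as an induction on $j$, and feasibility of the exploration problem at episode $j$ uses only $\mathcal{SS}^{j-1}$ (the previous episode's safe set), for which robust positive invariance and Corollary \ref{prop: piQ is feasible} are already available. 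So the correct ordering is: $\mathcal{SS}^{j-1}$ RPI $\Rightarrow$ exploration problem \eqref{eq: exploration optimization} feasible at episode $j$ $\Rightarrow$ the explored points are feasible for \eqref{eq:MPC discrete} $\Rightarrow$ $\mathcal{SS}^j$ RPI. Apart from tracking this carefully, the proof is a short direct verification that $(\bm{d}_{\mathrm{exp}}, {u}_{1:l_w}^\mathcal{W}) = (\bm{0}, \pi^{j-1}_\mathrm{Q}({x}_1^\mathcal{W}), \dots, \pi^{j-1}_\mathrm{Q}({x}_{l_w}^\mathcal{W}))$ is feasible.
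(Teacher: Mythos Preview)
Your proposal is correct and follows essentially the same approach as the paper: exhibit the feasible point $\bm{d}_{\mathrm{exp}}=\bm{0}$ and ${u}_i^\mathcal{W}=\pi^{j-1}_\mathrm{Q}(Ax_k^j+B{u}^\star+v_i^\mathcal{W})$, using that ${u}^\star$ is feasible for \eqref{eq:MPC discrete} to place ${x}_i^\mathcal{W}\in\mathcal{SS}^{j-1}$ and then invoking robust positive invariance (Proposition~\ref{prop: SS0 is rpi}). Your write-up is in fact more careful than the paper's, since you explicitly verify the descent-direction, state, and input constraints that the paper leaves implicit, and you flag the inductive ordering between Proposition~\ref{prop: SS0 is rpi} and this corollary.
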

\begin{proof}
    Consider the case $\bm{d}_{k,\mathrm{exp}}^j = \bm{0}$. Then, in \eqref{eq: exploration optimization}, ${x}_{i}^\mathcal{W} = A x_k^j + B {u}^{\star} + v_i^\mathcal{W}$. Since $v_i^\mathcal{W}$ is one of the vertices of $\mathcal{W}$ and ${u}^{\star}$ is an optimal input of \eqref{eq:MPC reformulated}, ${x}_{i}^\mathcal{W} \in \mathcal{SS}^{j-1}$.
    From Proposition \ref{prop: SS0 is rpi}, $\mathcal{SS}^{j-1}$ is a robust positive invariant set for the closed-loop system \eqref{eq:system} controlled by $\pi^{j-1}_\mathrm{Q}(\cdot)$ \eqref{eq: safe policy}. Thus, $A{x}_{i}^\mathcal{W} + B\pi^{j-1}_\mathrm{Q}({x}_{i}^\mathcal{W}) + w \in \mathcal{SS}^{j-1}, ~\forall w \in \mathcal{W}$.
    
    Therefore, we can find a feasible point of \eqref{eq: exploration optimization}:
    \begin{equation}
    \begin{aligned}
        & \bm{d}_{k,\mathrm{exp}}^j = \bm{0},\\
        & {u}_{i}^\mathcal{W} = \pi^{j-1}_\mathrm{Q}(Ax_k^j + B{u}^{\star}+v_i^\mathcal{W}). 
    \end{aligned}
    \end{equation} 
    Thus, the problem \eqref{eq: exploration optimization} has a feasible solution.
\end{proof}

\subsection{Convergence} \label{subsec: Convergence}
In this subsection, we prove the convergence in probability, $\lim_{k \rightarrow \infty} \mathbb{P}(x_k^j \in \mathcal{O}) = 1$.
We first prove the following.
\begin{proposition} \label{prop: Qj > ell + sum pmQj}
For $j\geq 0$, consider the value function $Q^{j}(\cdot)$ in \eqref{eq: value function update} and the LMPC policy \eqref{eq:MPC policy} at episode $j+1$, i.e., $\pi^{j+1\star}(\cdot)$.
Let Assumptions \ref{assum: bounded noise}-\ref{assum: initial solution} hold.
Then, the value function $Q^{j}(\cdot)$ satisfies the following inequality for all $x \in \mathcal{SS}^{j}$:
\begin{equation*}
    Q^{j}(x) \geq \ell(x, \pi^{j+1\star}(x)) + \sum_{m=1}^{M} p_m Q^{j}(Ax + B\pi^{j+1\star}(x) + \tilde{w}_m).
\end{equation*}
\end{proposition}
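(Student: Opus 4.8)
The plan is to exhibit, for a fixed $x \in \mathcal{SS}^{j}$, a concrete feasible point of the tractable MPC \eqref{eq:MPC discrete} at episode $j+1$ whose objective value is bounded above by $Q^{j}(x)$; optimality of $\pi^{j+1\star}(x)$ then gives the stated inequality. First I would invoke Proposition~\ref{prop: SS0 is rpi}, which guarantees that $\mathcal{SS}^{j}$ is robust positive invariant under the policy $\pi^{j}_\mathrm{Q}(\cdot)$ in \eqref{eq: safe policy}, and Corollary~\ref{prop: piQ is feasible}, which states that $\pi^{j}_\mathrm{Q}(x)$ is a feasible input of \eqref{eq:MPC discrete} for every $x \in \mathcal{SS}^{j}$. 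Hence $u = \pi^{j}_\mathrm{Q}(x)$ is a feasible candidate in \eqref{eq:MPC discrete} at episode $j+1$ (whose terminal set is $\mathcal{SS}^{j} \ominus \mathcal{W}$ and whose terminal cost is $Q^{j}(\cdot)$), and by optimality of $\hat V^{j+1}$,
\begin{equation*}
\ell(x,\pi^{j+1\star}(x)) + \sum_{m=1}^{M} p_m Q^{j}(Ax + B\pi^{j+1\star}(x) + \tilde w_m) \leq \ell(x,\pi^{j}_\mathrm{Q}(x)) + \sum_{m=1}^{M} p_m Q^{j}(Ax + B\pi^{j}_\mathrm{Q}(x) + \tilde w_m).
\end{equation*}

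It then remains to show the right-hand side is at most $Q^{j}(x)$. For this I would unfold the definition \eqref{eq: value function update}: let $\bm{\lambda}^{j\star}$ be optimal, so $x = \mathbf{X}^{j}\bm{\lambda}^{j\star}$, $Q^{j}(x) = \mathbf{J}^{j}\bm{\lambda}^{j\star}$, and $\pi^{j}_\mathrm{Q}(x) = \mathbf{U}^{j}\bm{\lambda}^{j\star}$. Writing everything columnwise over the data points indexed by the support of $\bm{\lambda}^{j\star}$, I would apply Proposition~\ref{prop: (17) hold for all j}, i.e. inequality \eqref{eq: Qj condition}, to each column $[\mathbf{X}^{j}]_i$, multiply by $[\bm{\lambda}^{j\star}]_i \geq 0$, and sum, yielding
\begin{equation*}
Q^{j}(x) = \sum_i [\bm{\lambda}^{j\star}]_i [\mathbf{J}^{j}]_i \geq \sum_i [\bm{\lambda}^{j\star}]_i \ell([\mathbf{X}^{j}]_i,[\mathbf{U}^{j}]_i) + \sum_i [\bm{\lambda}^{j\star}]_i \sum_{m=1}^{M} p_m Q^{j}(A[\mathbf{X}^{j}]_i + B[\mathbf{U}^{j}]_i + \tilde w_m).
\end{equation*}
Now convexity does the rest: joint convexity of $\ell$ (Assumption~\ref{assum: Stage cost}) gives $\sum_i [\bm{\lambda}^{j\star}]_i \ell([\mathbf{X}^{j}]_i,[\mathbf{U}^{j}]_i) \geq \ell(\mathbf{X}^{j}\bm{\lambda}^{j\star}, \mathbf{U}^{j}\bm{\lambda}^{j\star}) = \ell(x,\pi^{j}_\mathrm{Q}(x))$, and, since $A[\mathbf{X}^{j}]_i + B[\mathbf{U}^{j}]_i + \tilde w_m$ is affine in $i$ with $\sum_i [\bm{\lambda}^{j\star}]_i (A[\mathbf{X}^{j}]_i + B[\mathbf{U}^{j}]_i + \tilde w_m) = Ax + B\pi^{j}_\mathrm{Q}(x) + \tilde w_m$, convexity of $Q^{j}(\cdot)$ applied for each fixed $m$ and then weighting by $p_m \geq 0$ gives $\sum_i [\bm{\lambda}^{j\star}]_i \sum_m p_m Q^{j}(\cdot) \geq \sum_m p_m Q^{j}(Ax + B\pi^{j}_\mathrm{Q}(x) + \tilde w_m)$. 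Chaining these three inequalities closes the argument.

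A few points of care. One must check that all the arguments $A[\mathbf{X}^{j}]_i + B[\mathbf{U}^{j}]_i + \tilde w_m$ actually lie in $\mathcal{SS}^{j}$ so that $Q^{j}(\cdot)$ is defined there — this follows from Proposition~\ref{prop: SS0 is rpi} (robust invariance implies each such point, being a disturbed successor under $\pi^{j}_\mathrm{Q}$ of a point in $\mathcal{SS}^{j}$ with $\tilde w_m \in \mathcal{W}$, stays in $\mathcal{SS}^{j}$) — and likewise that $Ax + B\pi^{j}_\mathrm{Q}(x) + \tilde w_m \in \mathcal{SS}^{j}$. One should also confirm that $\pi^{j}_\mathrm{Q}(x)$ is feasible in the episode-$(j{+}1)$ problem, not the episode-$j$ one; this is immediate because $\mathcal{SS}^{j-1} \subset \mathcal{SS}^{j}$ makes the terminal constraint of the $(j{+}1)$-problem no tighter, and Corollary~\ref{prop: piQ is feasible} gives feasibility at level $j$ directly. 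The main obstacle — really the only non-bookkeeping step — is correctly handling the convexity of $Q^{j}$ under the affine pushforward: one needs that the convex weights $[\bm{\lambda}^{j\star}]_i$ transport through the map $z \mapsto Az + B(\cdot) + \tilde w_m$ simultaneously for the state and input data, which is why it is essential that $\pi^{j}_\mathrm{Q}(x) = \mathbf{U}^{j}\bm{\lambda}^{j\star}$ uses the \emph{same} multipliers $\bm{\lambda}^{j\star}$ as $x = \mathbf{X}^{j}\bm{\lambda}^{j\star}$; I would state this explicitly before invoking convexity.
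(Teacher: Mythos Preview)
Your proposal is correct and follows essentially the same approach as the paper: both arguments multiply the data-point inequality \eqref{eq: Qj condition} from Proposition~\ref{prop: (17) hold for all j} by the optimal multipliers $[\bm{\lambda}^{j\star}]_i$, apply convexity of $\ell$ and of $Q^{j}$ to pass to $\ell(x,\pi^{j}_\mathrm{Q}(x)) + \sum_m p_m Q^{j}(Ax+B\pi^{j}_\mathrm{Q}(x)+\tilde w_m)$, and then invoke feasibility of $\pi^{j}_\mathrm{Q}(x)$ in the episode-$(j{+}1)$ MPC together with optimality of $\pi^{j+1\star}(x)$. The only difference is presentational order (you apply optimality first, the paper last), and your additional care points about the domain of $Q^{j}$ and the shared multipliers $\bm{\lambda}^{j\star}$ are well taken.
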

\begin{proof}
    Let $\bm{\lambda}^{j\star}$ denote an optimal solution of \eqref{eq: value function update} in the episode $j$, i.e., $Q^j(x) = \mathbf{J}^j\bm{\lambda}^{j\star}$ for given $x\in\mathcal{SS}^{j}$.
    Multiplying $\bm{\lambda}^{j\star}$ on both sides of the \eqref{eq: Qj condition} in Proposition \ref{prop: (17) hold for all j}, we have the following for $x\in\mathcal{SS}^{j}$ as follows:
    \begin{equation*} 
    \begin{aligned}
        & Q^j(x) \geq {\scriptstyle\sum}_{i=1}^{\mathrm{dim}(\mathbf{J}^j)} [\bm{\lambda}^{j\star}]_i \big\{\ell([\mathbf{X}^{j}]_i,[\mathbf{U}^{j}]_i) \\
        & ~~~~~~~~~~~~~~~~~~~~ + {\scriptstyle\sum}_{m=1}^M p_m Q^{j}(A[\mathbf{X}^{j}]_i + B[\mathbf{U}^{j}]_i + \tilde{w}_m) \big\}
    \end{aligned}
    \end{equation*}
    Due to the convexity of the stage cost $\ell(\cdot,\cdot)$ from Assumption \ref{assum: Stage cost} and the convexity of the value function $Q^j(\cdot)$, as it arises from being the optimal objective function of the mp-LP problem \eqref{eq: value function update}, the following holds for $x \in \mathcal{SS}^{j}$:
    \begin{equation*} 
    \begin{aligned}
        & Q^j(x) \\
        & \geq \ell\bigg(\underbrace{{\scriptstyle\sum}_{i=1}^{\mathrm{dim}(\mathbf{J}^j)} [\bm{\lambda}^{j\star}]_i [\mathbf{X}^{j}]_i}_{x \text{ from \eqref{eq: value function update}}},~\underbrace{{\scriptstyle\sum}_{i=1}^{\mathrm{dim}(\mathbf{J}^j)} [\bm{\lambda}^{j\star}]_i [\mathbf{U}^{j}]_i}_{\pi_\mathrm{Q}^j(x) \text{ from \eqref{eq: safe policy}}}\bigg) \\
        & ~~ + {\scriptstyle\sum}_{m=1}^M p_m Q^{j}\bigg( {\scriptstyle\sum}_{i=1}^{\mathrm{dim}(\mathbf{J}^j)} [\bm{\lambda}^{j\star}]_i (A[\mathbf{X}^{j}]_i + B[\mathbf{U}^{j}]_i + \tilde{w}_m)\bigg) \\
        & = \ell(x,\pi_\mathrm{Q}^j(x)) + {\scriptstyle\sum}_{m=1}^M p_m Q^{j}(Ax + B\pi_\mathrm{Q}^j(x) + \tilde{w}_m). \\
    \end{aligned}
    \end{equation*}
    The inequality holds as \( \bm{\lambda}^{j\star} \) is an optimal solution of \eqref{eq: value function update}, giving \(\sum_{i=1}^{\mathrm{dim}(\mathbf{J}^j)} [\bm{\lambda}^{j\star}]_i [\mathbf{X}^{j}]_i = x\) from the first constraint of \eqref{eq: value function update}. Also, \(\sum_{i=1}^{\mathrm{dim}(\mathbf{J}^j)} [\bm{\lambda}^{j\star}]_i [\mathbf{U}^{j}]_i = \pi_\mathrm{Q}^j(x)\) from \eqref{eq: safe policy}.

    From Corollary \ref{prop: piQ is feasible}, any $\pi_\mathrm{Q}^j(x)$ is a feasible solution of the MPC \eqref{eq:MPC discrete} for all $x \in \mathcal{SS}^j, ~\forall k \geq 0$. 
    In the MPC \eqref{eq:MPC discrete} at episode $j$, the cost $\ell(x,u) + \sum_{m=1}^M p_m Q^{j}(Ax + Bu + \tilde{w}_m)$ is minimized so we have that:
    \begin{equation*}
    \begin{aligned}
        & Q^j(x) \\
        &\geq \ell(x,\pi_\mathrm{Q}^j(x)) + {\scriptstyle\sum}_{m=1}^M p_m Q^{j}(Ax + B\pi_\mathrm{Q}^j(x) + \tilde{w}_m) \\
        &\geq \ell(x,\pi^{j+1\star}(x)) + {\scriptstyle\sum}_{m=1}^M p_m Q^{j}(Ax + B\pi^{j+1\star}(x) + \tilde{w}_m),
    \end{aligned}
    \end{equation*}
    which proves the claim.
\end{proof}

\begin{theorem}
    For $j\geq 1$, consider the closed-loop system \eqref{eq:system} controlled by the MPC policy \eqref{eq:MPC policy} at episode $j$. Let Assumptions \ref{assum: bounded noise}-\ref{assum: initial solution} hold.
    Then, $\lim_{k \rightarrow \infty} \mathbb{P}(x_k^j \in \mathcal{O}) = 1$.
\end{theorem}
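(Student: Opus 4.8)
The plan is to treat the learned value function $Q^{j-1}(\cdot)$ as a stochastic Lyapunov function for the closed loop of episode $j$ and run a supermartingale argument. First I would apply Proposition~\ref{prop: Qj > ell + sum pmQj} at index $j-1$ (its hypotheses hold because $Q^{j-1}(\cdot)$ is convex, being the optimal value of the mp-LP \eqref{eq: value function update}) to get, for every $x\in\mathcal{SS}^{j-1}$,
\begin{equation*}
Q^{j-1}(x)\ \ge\ \ell\big(x,\pi^{j\star}(x)\big)+\sum_{m=1}^{M}p_m\,Q^{j-1}\big(Ax+B\pi^{j\star}(x)+\tilde{w}_m\big),
\end{equation*}
and then bound the discrete sum from below by $\mathbb{E}_w\!\big[Q^{j-1}(Ax+B\pi^{j\star}(x)+w)\big]$ via Proposition~\ref{prop: lyapunov function of the closed-loop system}, again at index $j-1$. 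By Theorem~\ref{thm: recursive feasibility} the closed-loop state satisfies $x_k^j\in\mathcal{SS}^{j-1}$ for all $k$ (the robustified terminal constraint of \eqref{eq:MPC discrete} forces $x_{k+1}^j\in\mathcal{SS}^{j-1}$ for every $w_k^j\in\mathcal{W}$), so evaluating the chained inequality at $x=x_k^j$ and using $x_{k+1}^j=Ax_k^j+B\pi^{j\star}(x_k^j)+w_k^j$ with $w_k^j$ independent of $x_k^j$ gives
\begin{equation*}
\mathbb{E}\big[Q^{j-1}(x_{k+1}^j)\ \big|\ x_k^j\big]\ \le\ Q^{j-1}(x_k^j)-\ell\big(x_k^j,\pi^{j\star}(x_k^j)\big).
\end{equation*}

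Next I would note $Q^{j-1}(\cdot)\ge 0$: by Proposition~\ref{prop: (17) hold for all j} every entry of $\mathbf{J}^{j-1}$ dominates a nonnegative stage cost plus a $Q^{j-1}$ term, so $\mathbf{J}^{j-1}\ge\bm{0}$ entrywise and hence $Q^{j-1}(x)\ge 0$ as a nonnegative combination of those entries. Thus $\{Q^{j-1}(x_k^j)\}_{k\ge0}$ is a nonnegative supermartingale whose one-step drop is at least $\ell(x_k^j,\pi^{j\star}(x_k^j))\ge0$. Taking total expectations, summing over $k=0,\dots,K-1$ from $x_0^j=x_S$, and using $Q^{j-1}(x_K^j)\ge0$ yields $\sum_{k=0}^{K-1}\mathbb{E}[\ell(x_k^j,\pi^{j\star}(x_k^j))]\le Q^{j-1}(x_S)$; letting $K\to\infty$, and noting $Q^{j-1}(x_S)<\infty$ since $x_S\in\mathcal{SS}^{0}\subseteq\mathcal{SS}^{j-1}$ and $Q^{j-1}$ is finite-valued on its bounded domain,
\begin{equation*}
\sum_{k=0}^{\infty}\mathbb{E}\big[\ell(x_k^j,\pi^{j\star}(x_k^j))\big]\ \le\ Q^{j-1}(x_S)\ <\ \infty .
\end{equation*}
Hence $\mathbb{E}[\ell(x_k^j,\pi^{j\star}(x_k^j))]\to0$, and since all terms are nonnegative the sum is finite a.s., so $\ell(x_k^j,\pi^{j\star}(x_k^j))\to0$ a.s. Because the trajectory stays in the compact set $\mathcal{SS}^{j-1}$ and Assumption~\ref{assum: Stage cost} makes $\ell$ positive definite with respect to $\mathcal{O}$, there is a nondecreasing $\alpha$ with $\alpha(0)=0$, $\alpha(s)>0$ for $s>0$, and $\ell(x,u)\ge\alpha(\mathrm{dist}(x,\mathcal{O}))$ on $\mathcal{SS}^{j-1}\times\mathcal{U}$ (take $\alpha(s)$ to be the minimum of $\ell$ over the compact set $\{x\in\mathcal{SS}^{j-1}:\mathrm{dist}(x,\mathcal{O})\ge s\}\times\mathcal{U}$). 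Therefore $\mathrm{dist}(x_k^j,\mathcal{O})\to0$ a.s., which already gives $\mathbb{P}(\mathrm{dist}(x_k^j,\mathcal{O})>\varepsilon)\to0$ for every $\varepsilon>0$.

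Finally I would upgrade ``$\mathrm{dist}(x_k^j,\mathcal{O})\to0$'' to ``$\mathbb{P}(x_k^j\in\mathcal{O})\to1$''. One first shows that $\mathcal{O}$ is absorbing for the closed loop: if $x_k^j\in\mathcal{O}$, the candidate input $Kx_k^j\in\mathcal{KO}\subseteq\mathcal{U}$ is feasible for \eqref{eq:MPC discrete} by Assumption~\ref{assum: Robust Positive Invariance} and attains cost $0$, so by optimality $\ell(x_k^j,\pi^{j\star}(x_k^j))=0$ and $Q^{j-1}(Ax_k^j+B\pi^{j\star}(x_k^j)+\tilde{w}_m)=0$ for every $m$ with $p_m>0$ (in particular for every vertex index); since $\mathbf{J}^{j-1}\ge\bm{0}$ and $\ell$ is positive definite with respect to $\mathcal{O}$ one gets $\{x:Q^{j-1}(x)=0\}=\mathcal{O}$, and taking the convex hull over the vertices $\tilde{w}_m$ of $\mathcal{W}$ shows $x_{k+1}^j\in\mathcal{O}$ for all $w_k^j\in\mathcal{W}$. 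Consequently $\{x_k^j\in\mathcal{O}\}$ is an increasing sequence of events, so $\mathbb{P}(x_k^j\in\mathcal{O})\uparrow\mathbb{P}(\tau^j<\infty)$ with $\tau^j=\inf\{k:x_k^j\in\mathcal{O}\}$. It then remains to prove $\tau^j<\infty$ a.s., for which I would establish a one-step reachability estimate: there exist $\delta,\eta>0$ such that $\mathbb{P}(x_{k+1}^j\in\mathcal{O}\mid x_k^j=x)\ge\eta$ whenever $\mathrm{dist}(x,\mathcal{O})\le\delta$; continuity of $Q^{j-1}$ and the smallness of the MPC cost near $\mathcal{O}$ force the nominal successor $Ax+B\pi^{j\star}(x)$ to lie near $\mathcal{O}\ominus\mathcal{W}$, which is full dimensional because $0\in\mathrm{int}(\mathcal{W})$, so the full support of $p(w)$ places at least a fixed mass $\eta$ on the disturbances that land the state inside $\mathcal{O}$. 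Since $\mathrm{dist}(x_k^j,\mathcal{O})\to0$ a.s., almost every trajectory eventually stays in the $\delta$-neighborhood of $\mathcal{O}$, after which a geometric-trials (Borel--Cantelli) argument forces entry into $\mathcal{O}$ in finite time; hence $\mathbb{P}(\tau^j<\infty)=1$. \textbf{The main obstacle is precisely this last upgrade}: the supermartingale machinery of the first two paragraphs by itself only yields $\mathrm{dist}(x_k^j,\mathcal{O})\to0$, and converting this into literal containment with probability tending to one requires both the absorbing property of $\mathcal{O}$ under the learned MPC and the quantitative reachability estimate that leans on $0\in\mathrm{int}(\mathcal{W})$ and the full support of $p(w)$; a secondary technical point is verifying that the positive-definiteness reading of Assumption~\ref{assum: Stage cost} is enough to deliver both $\mathbf{J}^{j-1}\ge\bm{0}$ / $\{Q^{j-1}=0\}=\mathcal{O}$ and the $\alpha$-lower bound.
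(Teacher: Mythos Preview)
Your main line coincides with the paper's: combine Proposition~\ref{prop: Qj > ell + sum pmQj} and Proposition~\ref{prop: lyapunov function of the closed-loop system} at index $j-1$ to obtain
\[
Q^{j-1}(x)\ \ge\ \ell\big(x,\pi^{j\star}(x)\big)+\mathbb{E}_w\big[Q^{j-1}(Ax+B\pi^{j\star}(x)+w)\big]
\]
for all $x\in\mathcal{SS}^{j-1}$, telescope along the closed loop from $x_S$, and deduce $\sum_{k\ge0}\mathbb{E}\big[\ell(x_k^j,\pi^{j\star}(x_k^j))\big]\le Q^{j-1}(x_S)<\infty$, hence $\mathbb{E}\big[\ell(x_k^j,\pi^{j\star}(x_k^j))\big]\to0$. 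The paper writes the same thing as a recursive chain of inequalities rather than as a supermartingale, and bounds $Q^{j-1}(x_S)$ via $Q^{j-1}(x_S)\le Q^{0}(x_S)<\infty$ (Proposition~\ref{prop: nonincreasing value function} and Assumption~\ref{assum: initial solution}) rather than by your finiteness-on-a-compact-domain remark, but the content is identical.

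The divergence is entirely in the last step. The paper stops at $\mathbb{E}\big[\ell(x_k^j,\pi^{j\star}(x_k^j))\big]\to0$ and asserts in a single sentence that Assumption~\ref{assum: Stage cost} then yields $\mathbb{P}(x_k^j\in\mathcal{O})\to1$. You correctly recognize that for a continuous positive-definite stage cost (such as the paper's own example $\lVert x\rVert_\mathcal{O}^2+\lVert u\rVert_{\mathcal{KO}}^2$, which vanishes on $\partial\mathcal{O}$) this step only immediately delivers $\mathrm{dist}(x_k^j,\mathcal{O})\to0$ in probability, not literal membership, and you supply a genuine upgrade: absorption of $\mathcal{O}$ under the learned MPC, a one-step reachability bound near $\mathcal{O}$ exploiting $0\in\mathrm{int}(\mathcal{W})$ and the support of $p(w)$, and a Borel--Cantelli finish. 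That extra machinery is not superfluous---it addresses something the paper glosses over---and the residual technicalities you flag (the precise reading of Assumption~\ref{assum: Stage cost} needed for $\{Q^{j-1}=0\}=\mathcal{O}$, whether the vertex indices carry $p_m>0$, and full-dimensionality of $\mathcal{O}\ominus\mathcal{W}$) are honest loose ends of the same kind the paper itself leaves implicit.
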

\begin{proof}
From the Proposition \ref{prop: lyapunov function of the closed-loop system} and \ref{prop: Qj > ell + sum pmQj}, the following inequality holds:
\begin{equation*}
    Q^{j-1}(x) \geq \ell(x, \pi^{j \star}(x)) + \mathbb{E}_w[Q^{j-1}(Ax + B\pi^{j \star}(x) + w)],
\end{equation*}
for all $x \in \mathcal{SS}^{j}$.
Then, starting from $x_0^j = x_S$, we can derive the following inequalities:
\begin{equation} \label{eq: decreasing sequence sum of stage costs1}
\begin{aligned}
    & Q^{j-1}(x_S) \\
    & \geq \mathbb{E}_{w_0^j}[\ell(x_0^j, \pi^{j \star}(x_0^j)) + Q^{j-1}(x_1^j)] \\
    & \geq \mathbb{E}_{w_{0:1}^j}[\ell(x_0^j, \pi^{j \star}(x_0^j)) + \ell(x_1^j, \pi^{j \star}(x_1^j)) + Q^{j-1}(x_2^j)] \\
    & \cdots \\
    & \geq \mathbb{E}_{w_{0:\infty}^j}\Big[{\scriptstyle\sum}_{k=0}^\infty \ell(x_k^j, ~  \pi^{j \star}(x_k^j)) + Q^{j-1}(x_\infty^j) \Big]. \\
\end{aligned}
\end{equation}
By definition of the value function, $Q^{j-1}(\cdot) \geq 0$.
Thus, the following inequalities hold from \eqref{eq: decreasing sequence sum of stage costs1}:
\begin{equation} \label{eq: decreasing sequence sum of stage costs}
\begin{aligned}
    & Q^{j-1}(x_S) \geq \mathbb{E}_{w_{0:\infty}^j}\Big[{\scriptstyle\sum}_{k=0}^\infty \ell(x_k^j, ~ \pi^{j \star}(x_k^j)) \Big]. 
\end{aligned}
\end{equation}
We know that from Proposition \ref{prop: nonincreasing value function}, $Q^{j-1}(x_S) \leq Q^{0}(x_S)$.
Moreover, from Assumption \ref{assum: initial solution} and \eqref{eq: initial value function}, $Q^{0}(x_S) < \infty$. 

Let $S_n = \mathbb{E}_{w_{0:n-1}^j}\Bigr[\sum_{k=0}^n \ell(x_k^j, \pi^{j \star}(x_k^j)) \Bigr]$ for $n \geq 1$.
Since $\ell(\cdot, \cdot) \geq 0$ by Assumption \ref{assum: Stage cost}, $S_n$ is monotonically increasing with $n$.
Moreover, it is bounded above by \eqref{eq: decreasing sequence sum of stage costs}.
Therefore, we have that $\lim_{n \rightarrow \infty} S_n = \mathbb{E}_{w_{0:\infty}^j}\Bigr[\sum_{k=0}^\infty \ell(x_k^j, \pi^{j \star}(x_k^j)) \Bigr]$ converges to a finite value \cite{rudin1976principles}.
Thus, we can conclude that $\lim_{k \rightarrow \infty} \mathbb{E}_{w_k^j}[\ell(x_k^j, \pi^{j \star}(x_k^j))] = 0$ \cite{rudin1976principles}. By Assumption \ref{assum: Stage cost}, this implies $\lim_{k \rightarrow \infty} \mathbb{P}(x_k^j \in \mathcal{O}) = 1$, which proves the claim.
\end{proof}
\begin{remark}
    \eqref{eq: decreasing sequence sum of stage costs} shows that $Q^{j-1}(x_S)$ is a performance upper bound of the expected cost of the system \eqref{eq:system} controlled by the MPC policy \eqref{eq:MPC policy} at episode $j$. Moreover, by Proposition \ref{prop: nonincreasing value function}, this upper bound decreases over episodes, which is one of our contributions.
\end{remark}

\subsection{Optimality} \label{subsec: optimiality gap}
In this subsection, we show the value function at the initial state $x_S$, i.e., $Q^j(x_S)$ \eqref{eq: value function update}, converges close to the optimal value function, i.e., $Q^{\star}(x_S)$ \eqref{eq:ftocp}, as the episode $j \rightarrow \infty$ under some assumptions.
To begin with, we prove that the safe set $\mathcal{SS}^j$ and the value function $Q^j(\cdot)$ converges as $j \rightarrow \infty$.
\begin{proposition}
        As $j \rightarrow \infty$, the safe set and the value function converge as follows:
    \begin{equation*}
        \mathcal{SS}^j \rightarrow \mathcal{SS}^\infty, ~Q^j(\cdot) \rightarrow Q^\infty(\cdot).
    \end{equation*}
\end{proposition}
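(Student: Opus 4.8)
The plan is to establish convergence of the two coupled sequences $\{\mathcal{SS}^j\}$ and $\{Q^j(\cdot)\}$ by exhibiting monotonicity together with a uniform bound, and then invoking standard monotone-convergence arguments. First I would handle the safe sets. Recall from the construction in \eqref{eq: state data}--\eqref{eq: terminal constraint update} that $\mathcal{SS}^{j-1} \subseteq \mathcal{SS}^{j}$ for all $j$, so $\{\mathcal{SS}^j\}$ is a nested increasing sequence of compact convex polytopes. Each $\mathcal{SS}^j$ is contained in $\mathcal{X}$ (since all stored data points lie in $\mathcal{X}$, which is a fixed polyhedron, and by Proposition~\ref{prop: SS0 is rpi} and Corollary~\ref{prop: piQ is feasible} the stored states remain feasible), so the sequence is uniformly bounded. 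Defining $\mathcal{SS}^\infty := \mathrm{cl}\big(\bigcup_{j\geq 0} \mathcal{SS}^j\big)$, I would argue that $\mathcal{SS}^j \to \mathcal{SS}^\infty$ in the Hausdorff metric: the increasing union of a nested family of convex sets is convex, its closure is compact and convex, and Hausdorff convergence of a monotone increasing sequence of compact sets to the closure of their union is standard.

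Next I would turn to the value functions. The key tool is Proposition~\ref{prop: nonincreasing value function}: for every $j$ and every $x \in \mathcal{SS}^j$ we have $Q^{j}(x) \geq Q^{j+1}(x)$. Fix any $x$ in $\bigcup_j \mathrm{int}(\mathcal{SS}^j)$; for all $j$ large enough that $x \in \mathcal{SS}^j$, the sequence $\{Q^j(x)\}_{j\geq j_0}$ is monotonically non-increasing. It is also bounded below by $0$ (by the definition \eqref{eq: value function update}, since $\mathbf{J}^j \geq 0$ and $\bm{\lambda}^j \geq \bm{0}$; the cost data entries are nonnegative because of Assumption~\ref{assum: Stage cost} and the recursive relation \eqref{eq: Qj condition}). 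A bounded monotone real sequence converges, so the pointwise limit $Q^\infty(x) := \lim_{j\to\infty} Q^j(x)$ exists for every such $x$, and by continuity it extends to $\mathcal{SS}^\infty$. I would also note that each $Q^j$ is convex and piecewise affine, and convexity is preserved under pointwise limits, so $Q^\infty(\cdot)$ is convex on $\mathcal{SS}^\infty$.

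To upgrade pointwise convergence to a cleaner statement I would use the Lipschitz bound \eqref{eq: lip con Qj}: if the Lipschitz constants $L^j$ can be taken uniformly bounded (which follows from the fact that the data-generating costs $\hat V^j(\cdot)$ are themselves bounded on the compact set $\mathcal{X}$ via the performance bound \eqref{eq: decreasing sequence sum of stage costs} and the initialization $Q^0(x_S) < \infty$), then $\{Q^j\}$ is equi-Lipschitz, pointwise convergence on a dense subset implies uniform convergence on compact subsets, and the limit $Q^\infty$ is itself Lipschitz. This gives $Q^j(\cdot) \to Q^\infty(\cdot)$ in the strong sense needed for the subsequent optimality-gap analysis.

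The main obstacle I anticipate is making the convergence of the \emph{pair} rigorous given that the domain of $Q^j$ is itself changing with $j$: one must be careful that the limit function $Q^\infty$ is well-defined on all of $\mathcal{SS}^\infty$ and not merely on $\bigcup_j \mathcal{SS}^j$, and that the monotonicity in Proposition~\ref{prop: nonincreasing value function} is only asserted on $\mathcal{SS}^j$, not on the larger set. Handling the boundary of $\mathcal{SS}^\infty$ — points that lie in the closure of the union but in no individual $\mathcal{SS}^j$ — requires the equi-Lipschitz continuity argument above to extend $Q^\infty$ continuously; establishing that the Lipschitz constants $L^j$ are indeed uniformly bounded is the technical crux and would be the step I would spend the most care on.
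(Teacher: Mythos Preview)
Your approach is essentially the same as the paper's: both arguments use the nestedness $\mathcal{SS}^{j-1}\subseteq\mathcal{SS}^{j}$ to obtain a set limit, and then invoke Proposition~\ref{prop: nonincreasing value function} together with a lower bound on $Q^j$ to apply the monotone convergence theorem pointwise. The paper's proof is in fact considerably terser than yours---it stops at pointwise convergence and does not address the Hausdorff metric, the equi-Lipschitz upgrade to uniform convergence, or the changing-domain issue you flag---so your additional rigor on those points goes beyond what the paper provides.
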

\begin{proof}
First, we prove that \(\mathcal{SS}^j \rightarrow \mathcal{SS}^\infty\). Note that as the state data matrix is augmented as in \eqref{eq: state data}, the convex hull operation on these states will continue to expand, i.e., \(\mathcal{SS}^{j-1} \subset \mathcal{SS}^j\). By \cite[Prop. 1.4]{resnick2013probability}, this expansion implies that there exists a limit \(\lim_{j \rightarrow \infty}\mathcal{SS}^j = \mathcal{SS}^\infty\).

Second, we prove that \(Q^j(\cdot) \rightarrow Q^\infty(\cdot)\). For each fixed \(x \in \mathcal{SS}^j\), consider the sequence \(\{Q^j(x)\}\) with respect to $j$. From Proposition \ref{prop: nonincreasing value function}, \(Q^{j-1}(x) \geq Q^j(x)\), indicating that \(\{Q^j(x)\}\) is a pointwise nonincreasing sequence of real numbers. Since \(Q^j(x)\) is bounded below for all \(j\) by Assumption \ref{assum: Stage cost} and \eqref{eq: decreasing sequence sum of stage costs}, \(\lim_{j\rightarrow \infty} Q^j(x)\) exists for each fixed \(x\) by the Monotone Convergence Theorem \cite[Thm. 3.14]{rudin1976principles}. This implies that the sequence of functions \(\{Q^j(\cdot)\}\) converges pointwise to a function \(Q^\infty(\cdot)\).
\end{proof}

\begin{definition}
    The control policy $\pi^\infty(\cdot)$ is the MPC policy \eqref{eq:MPC policy} when the safe set and the value function in the MPC problem \eqref{eq:MPC discrete} are set to $\mathcal{SS}^\infty$ and $Q^\infty(\cdot)$, respectively.
\end{definition}

We use the following assumptions.
\begin{assumption} \label{assum: it converges to steady-state solution}
   The closed-loop state at the time step $k$ of the system \eqref{eq:system} controlled by $\pi^\infty(\cdot)$, $x_k^\infty$, are located in the relative interior of the set $\mathcal{SS}^\infty$, and will reach the target set $\mathcal{O}$ within the finite time step $T^\infty$.
\end{assumption}
\begin{assumption} \label{assum: existence of Qstar}
    There exists an optimal value function of \eqref{eq:ftocp}, denoted as $Q^\star: \mathcal{X} \rightarrow \mathbb{R}$. Furthermore, the system \eqref{eq:system} controlled by the optimal control policy $\pi^\star(\cdot)$ will reach the target set $\mathcal{O}$ within the finite time step $T^\star$.
\end{assumption}

Note that the optimal value function $Q^\star(\cdot)$ is convex as it is a sum of convex stage costs as written in \eqref{eq:ftocp}. Once the system \eqref{eq:system} controlled by either $\pi^\infty(\cdot)$ or $\pi^\star(\cdot)$ reaches the target set $\mathcal{O}$, it will remain within it. This is because remaining in the target set $\mathcal{O}$ is optimal according to Assumption \ref{assum: Stage cost}. 
Moreover, remaining in the target set $\mathcal{O}$ is feasible by Assumption \ref{assum: Robust Positive Invariance}.


\begin{proposition} \label{prop: TQ tools}
    Let Assumptions \ref{assum: bounded noise}-\ref{assum: it converges to steady-state solution} hold. Let $\delta(w,M)$ satisfies the inequality in \eqref{eq: delta_bnd}, and $\delta = \max_{w \in \mathcal{W}} \delta(w,M)$. Then, the following inequalities hold for all $x \in \mathcal{SS}^\infty$:
    \begin{equation} \label{eq: prop TQ tools main}
        \mathbb{E}_w [Q^\infty(x+w)] - \sum_{m=1}^{M} p_m Q^\infty(x + \tilde{w}_m) \geq -L^\infty \delta,
    \end{equation}
    where $L^\infty$ is a Lipschitz constant of the function $Q^\infty(\cdot)$.
\end{proposition}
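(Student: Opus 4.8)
The plan is to collapse the left-hand side of \eqref{eq: prop TQ tools main} into a single disturbance integral and then bound the integrand pointwise by a Lipschitz estimate. First I would use the definition $p_m = \mathbb{E}_w[\mu_m(w)]$ from \eqref{eq: pm} together with linearity of expectation to write $\sum_{m=1}^M p_m Q^\infty(x+\tilde w_m) = \mathbb{E}_w\big[\sum_{m=1}^M \mu_m(w)\, Q^\infty(x+\tilde w_m)\big]$, so that
$$\mathbb{E}_w[Q^\infty(x+w)] - \sum_{m=1}^{M} p_m Q^\infty(x + \tilde{w}_m) = \mathbb{E}_w\Big[ Q^\infty(x+w) - {\textstyle\sum_{m=1}^M} \mu_m(w)\, Q^\infty(x+\tilde w_m)\Big].$$
It therefore suffices to lower-bound the integrand for each fixed $w \in \mathcal{W}$.

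Next I would exploit that $\{\mu_m(w)\}_{m=1}^M$ is a convex combination, i.e. $\mu_m(w)\geq 0$ and $\sum_{m=1}^M \mu_m(w)=1$ from \eqref{eq: cvx sum with mu w}, to rewrite the integrand as $\sum_{m=1}^M \mu_m(w)\big(Q^\infty(x+w) - Q^\infty(x+\tilde w_m)\big)$, in which only the indices with $\mu_m(w)\neq 0$ contribute. For each such index, Assumption~\ref{assum: muw assumption}.1 (read over all indices $m$ with $\mu_m(w)\neq 0$, as in the accompanying remark on the diameter of $\mathrm{conv}\{\tilde w_m:\mu_m(w)\neq 0\}$) gives $\lVert w - \tilde w_m \rVert \leq \delta(w,M) \leq \delta$, while the Lipschitz property \eqref{eq: lip con Qj} of $Q^\infty(\cdot)$ with constant $L^\infty$ yields $Q^\infty(x+w) - Q^\infty(x+\tilde w_m) \geq -L^\infty \lVert w - \tilde w_m\rVert \geq -L^\infty \delta$. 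Multiplying by $\mu_m(w)\geq 0$ and summing, using $\sum_m \mu_m(w)=1$, shows the integrand is at least $-L^\infty\delta$ for every $w$; taking the expectation over $w\sim p(w)$ then gives \eqref{eq: prop TQ tools main}.

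The main obstacle is keeping the estimate well posed rather than the algebra: $Q^\infty(\cdot)$ is defined only on $\mathcal{SS}^\infty$, so evaluating it at $x+w$ and $x+\tilde w_m$ requires these points to lie in $\mathcal{SS}^\infty$ — this holds in the way the proposition is used, where $x$ is the nominal successor state $\bar x_{k+1}\in\mathcal{SS}^\infty\ominus\mathcal{W}$ of \eqref{eq:MPC discrete}. One also needs the Lipschitz constant $L^\infty$ for the limit function: since each $Q^j(\cdot)$ is continuous, piecewise affine and bounded above on the bounded polytope $\mathcal{SS}^j$, hence Lipschitz as in \eqref{eq: lip con Qj}, and $Q^j\downarrow Q^\infty$ pointwise on the nested sets $\mathcal{SS}^j\uparrow\mathcal{SS}^\infty$ with $Q^\infty$ convex and bounded, $Q^\infty(\cdot)$ inherits Lipschitz continuity on $\mathcal{SS}^\infty$ with the constant $L^\infty$ in the statement. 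No appeal to Proposition~\ref{prop: lyapunov function of the closed-loop system} is required here, although that proposition shows the same quantity is also $\le 0$, so \eqref{eq: prop TQ tools main} is in fact a two-sided estimate of the discretization error.
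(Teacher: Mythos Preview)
Your proposal is correct and follows essentially the same route as the paper: both use the Lipschitz bound $\lvert Q^\infty(x+w)-Q^\infty(x+\tilde w_m)\rvert\le L^\infty\delta$ for the indices with $\mu_m(w)\neq 0$, combine via $\sum_m\mu_m(w)=1$, and then pass to the expectation through $p_m=\mathbb{E}_w[\mu_m(w)]$. The only cosmetic difference is ordering---you first collapse everything into a single expectation and bound the integrand, whereas the paper bounds pointwise and then takes expectations---and your extra remarks on the domain of $Q^\infty$ and the existence of $L^\infty$ go slightly beyond what the paper spells out.
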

\begin{proof}
    For the $m \in \{1,\cdots,M\}$ such that $\mu_m(w) \neq 0$, we have the following:
    \begin{equation} \label{eq: Q - Qm below Ldelta}
        \begin{aligned}
            & \lVert Q^\infty(x + w) - Q^\infty(x + \tilde{w}_m) \rVert \leq L^\infty \lVert w - \tilde{w}_m \rVert \leq L^\infty \delta. 
        \end{aligned}
    \end{equation}
    The first inequality is from \eqref{eq: lip con Qj}, and the second inequality is due to \eqref{eq: delta_bnd} and $\delta(w,M) \leq \delta$ by definition.
    As $\sum_{m=1}^{M} \mu_m(w) = 1$, we can derive the following from \eqref{eq: Q - Qm below Ldelta}:
    \begin{equation} \label{eq: Q geq Qm - Ldelta}
        \begin{aligned}
            &  Q^\infty(x + w) \geq {\scriptstyle\sum}_{m=1}^{M} \mu_m(w) Q^\infty(x + \tilde{w}_m) - L^\infty \delta.
        \end{aligned}
    \end{equation}
    Taking expectations to both sides of \eqref{eq: Q geq Qm - Ldelta}, we have that:
    \begin{equation} \label{eq: prop EQ >= Qhat - Ldelta}
        \begin{aligned}
            & ~~~~ \mathbb{E}_w [Q^\infty(x+w)]  \\
            & \geq \mathbb{E}_w \big[{\scriptstyle\sum}_{m=1}^{M} \mu_m(w)Q^\infty(x + \tilde{w}_m)- L^\infty \delta \big]  \\
            & = {\scriptstyle\sum}_{m=1}^{M} \mathbb{E}_w [\mu_m(w)] Q^\infty(x + \tilde{w}_m) - L^\infty \delta \\
            & = {\scriptstyle\sum}_{m=1}^{M} p_m Q^\infty(x + \tilde{w}_m) - L^\infty \delta,
        \end{aligned}
    \end{equation}
    which proves the claim.
\end{proof}

We define the approximate Bellman operator $\hat{\mathcal{T}}$ as follows:
\begin{equation} \label{eq: approximate bellman equation}
\begin{aligned}
    & \hat{\mathcal{T}}Q(x) := \min_u ~\ell(x,u) + \sum_{m=1}^{M} p_m Q(Ax + Bu + \tilde{w}_m)  \\
    & ~~~~~~~~~~~~~~ \text{s.t.,} ~ x \in \mathcal{X}, ~ u \in \mathcal{U}, \\
    & ~~~~~~~~~~~~~~~~~~~ Ax + Bu \in \mathcal{SS}^\infty \ominus \mathcal{W}.
\end{aligned}
\end{equation}
\begin{proposition} \label{prop: Q = That Q}
    Let Assumptions \ref{assum: bounded noise}-\ref{assum: existence of Qstar} hold.
    $\forall x \in \mathcal{SS}^\infty$, $Q^\infty(x) \leq \hat{\mathcal{T}}Q^\infty(x)$.
\end{proposition}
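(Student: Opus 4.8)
The inequality to be shown is the counterpart of Proposition~\ref{prop: Qj > ell + sum pmQj}: passing that proposition to the limit (using the nonincreasing property and $Q^j\to Q^\infty$) gives $Q^\infty(x)\ge\hat{\mathcal{T}}Q^\infty(x)$, whereas here we need the reverse bound, which genuinely uses that $Q^j$ is \emph{constructed} as the lower convex envelope of the stored data, not merely that it is a super-solution. Fix $x\in\mathcal{SS}^\infty$ and any input $u$ feasible for the program defining $\hat{\mathcal{T}}Q^\infty(x)$, i.e.\ $u\in\mathcal{U}$ and $Ax+Bu\in\mathcal{SS}^\infty\ominus\mathcal{W}$ (the constraint $x\in\mathcal{X}$ is automatic because $\mathcal{SS}^\infty\subseteq\mathcal{X}$). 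It suffices to prove
\begin{equation} \label{eq: goal Q leq ell + sum}
Q^\infty(x)\le \ell(x,u)+\sum_{m=1}^{M}p_m\,Q^\infty(Ax+Bu+\tilde{w}_m),
\end{equation}
and then take the minimum over all such $u$; the plan is to realize the right-hand side as a limit of episode-$(j{+}1)$ MPC values evaluated at data points that accumulate at $x$.

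For each large $j$ (so that $x\in\mathcal{SS}^{j}$) I would produce a state $z^j$ added \emph{freshly} during the Learning phase of episode $j{+}1$ (either along the closed-loop trajectory or by the exploration of Sec.~\ref{sec: exploration}) and an input $u^j$ such that (i) $z^j\to x$, $u^j\to u$, and (ii) $u^j$ is feasible for the MPC~\eqref{eq:MPC discrete} at episode $j{+}1$ with initial state $z^j$, i.e.\ $u^j\in\mathcal{U}$ and $Az^j+Bu^j\in\mathcal{SS}^{j}\ominus\mathcal{W}$. For (ii) I would use that $Ax+Bu+v_i^{\mathcal{W}}\in\mathcal{SS}^\infty$ for every vertex $v_i^{\mathcal{W}}$ of $\mathcal{W}$, that $\mathcal{SS}^{j}\uparrow\mathcal{SS}^\infty$, and convexity of $\mathcal{SS}^{j}$; for (i) I would invoke the exploration mechanism together with Assumption~\ref{assum: it converges to steady-state solution}. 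Since $z^j$ is a fresh data point of episode $j{+}1$, its stored cost equals the optimal MPC value $\hat{V}^{j+1}(z^j)$, so $Q^{j+1}(z^j)\le \hat{V}^{j+1}(z^j)$; and since $u^j$ is MPC-feasible at $z^j$ while the episode-$(j{+}1)$ MPC uses terminal cost $Q^{j}$ and terminal set $\mathcal{SS}^{j}$, optimality of $\hat{V}^{j+1}(z^j)$ gives
\begin{equation} \label{eq: chain at episode j}
Q^{j+1}(z^j)\ \le\ \hat{V}^{j+1}(z^j)\ \le\ \ell(z^j,u^j)+\sum_{m=1}^{M}p_m\,Q^{j}(Az^j+Bu^j+\tilde{w}_m).
\end{equation}

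Finally I would let $j\to\infty$ in~\eqref{eq: chain at episode j}. On the left, $Q^{j+1}(z^j)\to Q^\infty(x)$, because $|Q^{j+1}(z^j)-Q^\infty(x)|\le L^{j+1}\|z^j-x\|+|Q^{j+1}(x)-Q^\infty(x)|$ by~\eqref{eq: lip con Qj}, the second term vanishes by the pointwise convergence $Q^{j}\to Q^\infty$, and the first vanishes once the $L^{j}$ are controlled (they are uniformly bounded on the fixed compact set $\mathcal{SS}^\infty$ since the $Q^{j}$ are convex, nonnegative and bounded above by $Q^0$). On the right, $\ell(z^j,u^j)\to\ell(x,u)$ by continuity of $\ell$ (Assumption~\ref{assum: Stage cost}) and $Q^{j}(Az^j+Bu^j+\tilde{w}_m)\to Q^\infty(Ax+Bu+\tilde{w}_m)$ by the same Lipschitz-plus-pointwise-convergence argument, which yields~\eqref{eq: goal Q leq ell + sum}. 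I expect the main obstacle to be exactly steps (i)–(ii): for an \emph{arbitrary} $x\in\mathcal{SS}^\infty$ one must guarantee fresh data points $z^j\to x$ in late episodes carrying a feasible input $u^j\to u$, which is where the exploration design and Assumption~\ref{assum: it converges to steady-state solution} are essential (one needs that the explored states are, in the limit, rich enough to pin down $Q^\infty$ on $\mathcal{SS}^\infty$), and one must also handle the boundary case $Ax+Bu\in\partial(\mathcal{SS}^\infty\ominus\mathcal{W})$ by a further limiting argument over slightly shrunk safe sets; the remainder is routine continuity bookkeeping resting on Proposition~\ref{prop: piQ is feasible}, Theorem~\ref{thm: recursive feasibility}, and the monotone pointwise convergence $Q^{j}\to Q^\infty$.
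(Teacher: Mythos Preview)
Your proposal is correct and follows essentially the same route as the paper: both arguments rest on the random sampling step~\eqref{eq: sampling in SS} making the stored states (in the limit) dense in $\mathcal{SS}^\infty$, on the fact that the stored cost at a freshly added data point equals the one-step MPC value $\hat{V}^{j+1}$, and on $Q^{j+1}$ at a data point being bounded above by that stored cost. Your version is more careful about the limiting process---approximating $x$ by fresh data points $z^j$, using an arbitrary feasible $u^j\to u$, and passing to the limit via the Lipschitz bounds~\eqref{eq: lip con Qj}---whereas the paper's short argument works directly at $j=\infty$, asserting that ``any $x\in\mathcal{SS}^\infty$ is a column vector of $\mathbf{X}^\infty$'' and reading off the inequality from the corresponding entry of $\mathbf{J}^\infty$; your explicit handling of the boundary case $Ax+Bu\in\partial(\mathcal{SS}^\infty\ominus\mathcal{W})$ and of the feasibility of $u^j$ in $\mathcal{SS}^{j}\ominus\mathcal{W}$ fills in details the paper leaves implicit.
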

\begin{proof}
    We sample within the safe set by \eqref{eq: sampling in SS} and add to the state data matrix as in \eqref{eq: state data}. Thus, as \( j \to \infty \), any $x \in \mathcal{SS}^\infty$ is a column vector of the state data matrix $\mathbf{X}^\infty$. 

    Consider $x_c$, a column vector of the state data matrix $\mathbf{X}^\infty$.
    For such $x_c$, as \( j \to \infty \), the input data matrix $\mathbf{U}^\infty$ contains an optimal input $\pi^\infty(x_c)$ by the construction of $\mathbf{U}^\infty$ from \eqref{eq: exp input matrix} and \eqref{eq: input data}. Therefore, for such $x_c$, $\hat{\mathcal{T}}Q^\infty(x_c)$ is an element of the cost data matrix $\mathbf{J}^\infty$ from \eqref{eq: cost data}. Considering that $Q^\infty(\cdot)$ is the optimal cost function of \eqref{eq: value function update} as $j$ tends towards $\infty$, it follows that $\hat{\mathcal{T}}Q^\infty(x_c) \geq Q^\infty(x_c)$. 
    
    Thus, $\forall x \in \mathcal{SS}^\infty$, $Q^\infty(x) \leq \hat{\mathcal{T}}Q^\infty(x)$.
\end{proof}

\begin{proposition} \label{prop: pic feasible}
    Let Assumptions \ref{assum: bounded noise}-\ref{assum: existence of Qstar} hold. Let $T=\max(T^\star, T^\infty)$. Then, there exists $\lambda \in (0,1)$ such that:
    \begin{equation} \label{eq: xkc in SSinfty}
        x_k^c = \lambda x^\infty_k + (1-\lambda)x^\star_k \in \mathcal{SS}^\infty, ~ \forall k \in \{0,\cdots,T\},
    \end{equation}
    which is a closed-loop state of the system \eqref{eq:system} controlled by the following control policy:
    \begin{equation} \label{eq: pic def}
        \pi^c(x_k^c) = \lambda \pi^\infty(x^\infty_k) + (1-\lambda) \pi^\star(x^\star_k).
    \end{equation}
    Moreover, $\pi^c(x_k^c)$ in \eqref{eq: pic def} is a feasible solution of \eqref{eq: approximate bellman equation} at $x=x_k^c$ for $k \in \{0,\cdots,T-1\}$.
\end{proposition}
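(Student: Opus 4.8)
The plan is to verify the three assertions in turn — that $x_k^c\in\mathcal{SS}^\infty$, that $x_k^c$ is the closed-loop state of \eqref{eq:system} under $\pi^c$, and that $\pi^c(x_k^c)$ is feasible for \eqref{eq: approximate bellman equation} — by combining linearity of the dynamics with convexity and interiority of $\mathcal{SS}^\infty$. A preliminary observation is that $\mathcal{SS}^\infty$ is full-dimensional: since $0\in\mathrm{int}(\mathcal{W})$, any robust positive invariant set contains a translate of $\mathcal{W}$ and is therefore full-dimensional, in particular $\mathcal{O}$, hence so is $\mathcal{SS}^0\subseteq\mathcal{SS}^\infty$; consequently $\mathrm{relint}(\mathcal{SS}^\infty)=\mathrm{int}(\mathcal{SS}^\infty)$ and, by Assumption \ref{assum: it converges to steady-state solution}, $x_k^\infty\in\mathrm{int}(\mathcal{SS}^\infty)$ for all $k$. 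Run $\pi^\infty(\cdot)$ and $\pi^\star(\cdot)$ on \eqref{eq:system} from the common initial state $x_0^\infty=x_0^\star=x_S$ under one common disturbance realization $w_0,w_1,\dots$, and set $\bar x_{k+1}^\infty=Ax_k^\infty+B\pi^\infty(x_k^\infty)$, $\bar x_{k+1}^\star=Ax_k^\star+B\pi^\star(x_k^\star)$. Linearity then gives $x_{k+1}^c=\lambda(\bar x_{k+1}^\infty+w_k)+(1-\lambda)(\bar x_{k+1}^\star+w_k)=Ax_k^c+B\pi^c(x_k^c)+w_k$ and $x_0^c=x_S$, so $x_k^c$ is indeed the closed-loop trajectory under $\pi^c$; this part is immediate.

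To obtain $x_k^c\in\mathcal{SS}^\infty$, for each $k\in\{0,\dots,T\}$ pick $\rho_k>0$ with $B_{\rho_k}(x_k^\infty)\subseteq\mathcal{SS}^\infty$ (possible since $x_k^\infty\in\mathrm{int}(\mathcal{SS}^\infty)$) and let $E_k=\lVert x_k^\star-x_k^\infty\rVert$; then for any $\lambda$ with $1-\lambda\le\rho_k/E_k$ one has $x_k^c=x_k^\infty+(1-\lambda)(x_k^\star-x_k^\infty)\in B_{\rho_k}(x_k^\infty)\subseteq\mathcal{SS}^\infty$. As there are finitely many such $k$, some $\lambda\in(0,1)$ meets all these bounds.

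For feasibility of $\pi^c(x_k^c)$ in \eqref{eq: approximate bellman equation} with $k\le T-1$: $x_k^c\in\mathcal{X}$ follows from $\mathcal{SS}^\infty\subseteq\mathcal{X}$, and $\pi^c(x_k^c)\in\mathcal{U}$ from convexity of $\mathcal{U}$ together with $\pi^\infty(x_k^\infty),\pi^\star(x_k^\star)\in\mathcal{U}$. It remains to check the terminal constraint $\bar x_{k+1}^c:=Ax_k^c+B\pi^c(x_k^c)=\lambda\bar x_{k+1}^\infty+(1-\lambda)\bar x_{k+1}^\star\in\mathcal{SS}^\infty\ominus\mathcal{W}$. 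The terminal constraint of the MPC \eqref{eq:MPC discrete} defining $\pi^\infty$ gives $\bar x_{k+1}^\infty\in\mathcal{SS}^\infty\ominus\mathcal{W}$; moreover, since by Assumption \ref{assum: it converges to steady-state solution} every admissible successor $\bar x_{k+1}^\infty+w$, $w\in\mathcal{W}$, lies in $\mathrm{int}(\mathcal{SS}^\infty)$, compactness of $\mathcal{W}$ yields $\epsilon_k>0$ with $\bar x_{k+1}^\infty+\mathcal{W}+B_{\epsilon_k}\subseteq\mathcal{SS}^\infty$, i.e. $\bar x_{k+1}^\infty\in\mathrm{int}(\mathcal{SS}^\infty\ominus\mathcal{W})$. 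Writing $D_k=\lVert\bar x_{k+1}^\star-\bar x_{k+1}^\infty\rVert$, for $1-\lambda\le\epsilon_k/D_k$ and every $w\in\mathcal{W}$ we get $\bar x_{k+1}^c+w=(\bar x_{k+1}^\infty+w)+(1-\lambda)(\bar x_{k+1}^\star-\bar x_{k+1}^\infty)\in(\bar x_{k+1}^\infty+w)+B_{\epsilon_k}\subseteq\mathcal{SS}^\infty$, hence $\bar x_{k+1}^c\in\mathcal{SS}^\infty\ominus\mathcal{W}$. Choosing $\lambda\in(0,1)$ with $1-\lambda$ below all the finitely many thresholds $\{\rho_k/E_k\}_{k\le T}$ and $\{\epsilon_k/D_k\}_{k\le T-1}$ establishes all three assertions simultaneously.

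The main obstacle is this last, robustified terminal constraint: one needs $\bar x_{k+1}^c\in\mathcal{SS}^\infty\ominus\mathcal{W}$ for all $w\in\mathcal{W}$, not merely $\bar x_{k+1}^c+w_k\in\mathcal{SS}^\infty$ for the realized disturbance. This hinges on $\bar x_{k+1}^\infty$ lying in the \emph{interior} of $\mathcal{SS}^\infty\ominus\mathcal{W}$, which relies on reading Assumption \ref{assum: it converges to steady-state solution} as holding for every admissible disturbance sequence, and on the fact that $\lVert\bar x_{k+1}^\star-\bar x_{k+1}^\infty\rVert$ does not depend on $w$, so a single $\lambda$ works uniformly over $\mathcal{W}$ and, after a finite intersection of thresholds, over $k$. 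The dynamics identity and the $\mathcal{X}$ and $\mathcal{U}$ constraints are routine.
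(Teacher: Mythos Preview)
Your proof is correct and follows the same three–part structure as the paper (existence of $\lambda$ via interiority, closed-loop identity via linearity, feasibility by checking the three constraints of \eqref{eq: approximate bellman equation}). The only substantive difference is in how the robustified terminal constraint $Ax_k^c+B\pi^c(x_k^c)\in\mathcal{SS}^\infty\ominus\mathcal{W}$ is obtained. The paper simply reuses the first claim: since $x_{k+1}^c\in\mathcal{SS}^\infty$ was already established and $x_{k+1}^c=Ax_k^c+B\pi^c(x_k^c)+w_k$, it asserts the inclusion for all $w_k\in\mathcal{W}$—implicitly treating \eqref{eq: xkc in SSinfty} as holding for every disturbance realization with one uniform $\lambda$. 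You instead give an independent argument that $\bar x_{k+1}^\infty\in\mathrm{int}(\mathcal{SS}^\infty\ominus\mathcal{W})$ and then absorb the $(1-\lambda)(\bar x_{k+1}^\star-\bar x_{k+1}^\infty)$ perturbation. Both routes ultimately rely on reading Assumption~\ref{assum: it converges to steady-state solution} as holding across realizations; your version makes this dependence explicit and spells out why a \emph{single} $\lambda$ suffices via the finite intersection of thresholds, which the paper's one-line treatment leaves implicit.
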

\begin{proof}
First, since $x^\infty_k$ are in the relative interior of $\mathcal{SS}^\infty$ by Assumption \ref{assum: it converges to steady-state solution}, there exists $\lambda \in (0,1)$ that satisfies \eqref{eq: xkc in SSinfty}.

Second, by linearity of the system in \eqref{eq:system}, $x_k^c$ in \eqref{eq: xkc in SSinfty} is a closed-loop state of the system \eqref{eq:system} controlled by \eqref{eq: pic def}.

Third, we will show that $\pi^c(x_k^c)$ in \eqref{eq: pic def} is a feasible solution of \eqref{eq: approximate bellman equation} at $x=x_k^c$ for $k \in \{0,\cdots,T-1\}$. To do that, we need to verify the constraints of \eqref{eq: approximate bellman equation}.
For the state constraint, by \eqref{eq: xkc in SSinfty}, the closed-loop state $x_k^c \in \mathcal{SS}^\infty \subset \mathcal{X}$.
For the input constraint. $\pi^c(x) \in \mathcal{U}$. Note that $\pi^\infty(x) \in \mathcal{U}$, and $\pi^\star(x) \in \mathcal{U}$ for all $x \in \mathcal{SS}^\infty$. Moreover, $\mathcal{U}$ is a convex set. Thus, $\pi^c(x) \in \mathcal{U}$ for all $x \in \mathcal{SS}^\infty$.
For the terminal constraint, since the closed-loop states $x_k^c \in \mathcal{SS}^\infty$ for all $k \in \{0,\cdots,T\}$ from \eqref{eq: xkc in SSinfty}, $Ax_k^c + B\pi^c(x_k^c) + w_k \in \mathcal{SS}^\infty, ~\forall k \in \{0,\cdots,T-1\}, ~ \forall w_k \in \mathcal{W}$. 
Therefore, $\pi^c(\cdot)$ in \eqref{eq: pic def} is a feasible solution of \eqref{eq: approximate bellman equation} for $k \in \{0,\cdots,T-1\}$.
\end{proof}

\begin{theorem} \label{thm: nbd near optimal}
    Let Assumptions \ref{assum: bounded noise}-\ref{assum: existence of Qstar} hold. Let $\delta(w,M)$ satisfies the inequality in \eqref{eq: delta_bnd}, and $\delta = \max_{w \in \mathcal{W}} \delta(w,M)$. Then, the following holds:
    \begin{equation}
    \begin{aligned}
        & Q^\infty(x_S) \geq Q^\star(x_S) \geq Q^\infty(x_S) - c T L^\infty \delta,
    \end{aligned}
    \end{equation}
    where $c >1$ is a constant, and $T=\max(T^\star, T^\infty)$.
\end{theorem}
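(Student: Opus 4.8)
The plan is to prove the two inequalities separately. For $Q^\infty(x_S)\ge Q^\star(x_S)$: by \eqref{eq: decreasing sequence sum of stage costs}, $Q^{j-1}(x_S)\ge\mathbb{E}_{w_{0:\infty}^j}\!\left[\sum_{k=0}^\infty\ell(x_k^j,\pi^{j\star}(x_k^j))\right]$ for every $j\ge1$, and by Theorem \ref{thm: recursive feasibility} the closed-loop trajectory $\{x_k^j\}$ generated by the MPC policy \eqref{eq:MPC policy} from $x_S$ is well defined for all $k$ and, by the constraints of \eqref{eq:MPC discrete}, satisfies $x_k^j\in\mathcal{X}$ and $\pi^{j\star}(x_k^j)\in\mathcal{U}$ for every disturbance realization; hence $\pi^{j\star}(\cdot)$ is a feasible policy for \eqref{eq:ftocp}, so its expected cost is at least $Q^\star(x_S)$, giving $Q^{j-1}(x_S)\ge Q^\star(x_S)$, and $j\to\infty$ yields the claim.

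For the right inequality I would use the mixed trajectory $x_k^c=\lambda x_k^\infty+(1-\lambda)x_k^\star$ and policy $\pi^c$ from Proposition \ref{prop: pic feasible}, noting $x_0^c=x_S$. The core one-step estimate, for $k\in\{0,\dots,T-1\}$, chains $Q^\infty(x_k^c)\le\hat{\mathcal{T}}Q^\infty(x_k^c)$ (Proposition \ref{prop: Q = That Q}), the feasibility of $\pi^c(x_k^c)$ in \eqref{eq: approximate bellman equation} (Proposition \ref{prop: pic feasible}), and Proposition \ref{prop: TQ tools} applied at the nominal successor $\bar{x}_{k+1}^c=Ax_k^c+B\pi^c(x_k^c)\in\mathcal{SS}^\infty\ominus\mathcal{W}$, to obtain $Q^\infty(x_k^c)\le\ell(x_k^c,\pi^c(x_k^c))+\mathbb{E}_{w_k}[Q^\infty(x_{k+1}^c)]+L^\infty\delta$ with $x_{k+1}^c=\bar{x}_{k+1}^c+w_k$. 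Taking expectations and telescoping over $k=0,\dots,T-1$, and using that $x_T^c\in\mathcal{O}$ (since $x_T^\infty,x_T^\star\in\mathcal{O}$ and $\mathcal{O}$ is convex) together with $Q^\infty\equiv0$ on $\mathcal{O}$ (inherited from $Q^0$ by Proposition \ref{prop: nonincreasing value function} and Assumption \ref{assum: Stage cost}), yields
\[
Q^\infty(x_S)\le\mathbb{E}\Big[\sum_{k=0}^{T-1}\ell(x_k^c,\pi^c(x_k^c))\Big]+TL^\infty\delta .
\]
By convexity of $\ell$ (Assumption \ref{assum: Stage cost}), $\ell(x_k^c,\pi^c(x_k^c))\le\lambda\,\ell(x_k^\infty,\pi^\infty(x_k^\infty))+(1-\lambda)\,\ell(x_k^\star,\pi^\star(x_k^\star))$, and since $T\ge T^\star$ the $\pi^\star$-part sums to $Q^\star(x_S)$, so
\[
Q^\infty(x_S)\le\lambda\,\mathbb{E}\Big[\sum_{k=0}^{T-1}\ell(x_k^\infty,\pi^\infty(x_k^\infty))\Big]+(1-\lambda)Q^\star(x_S)+TL^\infty\delta .
\]

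It then remains to show $\mathbb{E}\!\left[\sum_{k=0}^{T-1}\ell(x_k^\infty,\pi^\infty(x_k^\infty))\right]\le Q^\infty(x_S)$. I would first obtain $Q^\infty(x)\ge\hat{\mathcal{T}}Q^\infty(x)$ on $\mathcal{SS}^\infty$: Proposition \ref{prop: Qj > ell + sum pmQj} gives $Q^j(x)\ge\hat{V}^{j+1}(x)$ for $x\in\mathcal{SS}^j$, and because $\mathcal{SS}^j\subseteq\mathcal{SS}^\infty$ and $Q^j\ge Q^\infty$, the MPC value $\hat{V}^{j+1}(x)$ minimizes a pointwise-larger objective over a subset of the feasible set of \eqref{eq: approximate bellman equation}, hence $\hat{V}^{j+1}(x)\ge\hat{\mathcal{T}}Q^\infty(x)$; letting $j\to\infty$ gives the inequality, which with Proposition \ref{prop: Q = That Q} forces $Q^\infty(x)=\hat{\mathcal{T}}Q^\infty(x)$. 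Since $\pi^\infty(x)$ is the minimizer defining $\hat{\mathcal{T}}Q^\infty(x)$ in \eqref{eq: approximate bellman equation}, Proposition \ref{prop: lyapunov function of the closed-loop system} then yields $Q^\infty(x)\ge\ell(x,\pi^\infty(x))+\mathbb{E}_w[Q^\infty(Ax+B\pi^\infty(x)+w)]$ for $x\in\mathcal{SS}^\infty$; iterating this along the $\pi^\infty$-trajectory from $x_0^\infty=x_S$ (which stays in $\mathcal{SS}^\infty$ by Assumption \ref{assum: it converges to steady-state solution}) up to time $T$, with $Q^\infty(x_T^\infty)=0$, gives the desired bound. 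Substituting into the last display and solving for $Q^\infty(x_S)$ produces $Q^\infty(x_S)\le Q^\star(x_S)+\frac{1}{1-\lambda}TL^\infty\delta$, i.e.\ the right inequality with $c=1/(1-\lambda)>1$. The main obstacle I expect is the domain bookkeeping — verifying that $\bar{x}_{k+1}^c$, the shifted points $\bar{x}_{k+1}^c+\tilde{w}_m$, and the entire $\pi^\infty$-trajectory lie where Propositions \ref{prop: TQ tools}, \ref{prop: Q = That Q}, and \ref{prop: Qj > ell + sum pmQj} are applicable, together with the auxiliary fact $Q^\infty\equiv0$ on $\mathcal{O}$; once these are in place the remaining steps are telescoping and convexity.
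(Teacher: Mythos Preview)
Your proposal is correct and follows essentially the same architecture as the paper: the left inequality via feasibility of the MPC policy for \eqref{eq:ftocp}, and the right inequality via the mixed trajectory $x_k^c$, the one-step chain Proposition~\ref{prop: Q = That Q} $\to$ Proposition~\ref{prop: pic feasible} $\to$ Proposition~\ref{prop: TQ tools}, telescoping to $T$, splitting by convexity of $\ell$, and solving for $c=1/(1-\lambda)$.

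The one place you diverge is in bounding $\mathbb{E}\!\left[\sum_{k=0}^{T-1}\ell(x_k^\infty,\pi^\infty(x_k^\infty))\right]\le Q^\infty(x_S)$. You go through an extra fixed-point argument ($Q^\infty\ge\hat{\mathcal{T}}Q^\infty$ via $Q^j\ge\hat V^{j+1}\ge\hat{\mathcal{T}}Q^\infty$, combined with Proposition~\ref{prop: Q = That Q} to force equality, then Proposition~\ref{prop: lyapunov function of the closed-loop system} and telescoping). The paper simply invokes \eqref{eq: decreasing sequence sum of stage costs} in the limit $j\to\infty$, which already gives $Q^\infty(x_S)\ge\mathbb{E}\!\left[\sum_{k=0}^\infty\ell(x_k^\infty,\pi^\infty(x_k^\infty))\right]$ directly. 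Your detour is valid and, as a by-product, establishes the stronger fact $Q^\infty=\hat{\mathcal{T}}Q^\infty$ on $\mathcal{SS}^\infty$, but it is not needed for the theorem; you can shorten the proof considerably by citing \eqref{eq: decreasing sequence sum of stage costs} as the paper does.
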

\begin{proof}
    First, we prove $Q^\star(x_S) \geq Q^\infty(x_S) - cT L^\infty \delta$. 
    Recall that $x_k^c = \lambda x^\infty_k + (1-\lambda)x^\star_k \in \mathcal{SS}^\infty$ with $\lambda \in (0, 1)$ as shown in \eqref{eq: xkc in SSinfty}. Evaluating the expected sum of the stage cost along the state trajectory $x_{0:T}^c$ yields:
    \begin{equation} \label{eq: Qc <= lam Qinf + 1-lam Qstar}
        \begin{aligned}
        & ~~~~ \mathbb{E}_{w_{0:T-1}}\big[{\scriptstyle\sum}_{k=0}^{T-1} \ell(x_k^c, ~ \pi^c(x_k^c))\big] \\
        & \leq \lambda \mathbb{E}_{w_{0:T-1}}\big[{\scriptstyle\sum}_{k=0}^{T-1} \ell(x_k^\infty, ~ \pi^\infty(x_k^\infty))\big] \\
        & ~~~~ + (1-\lambda) \mathbb{E}_{w_{0:T-1}}\big[{\scriptstyle\sum}_{k=0}^{T-1} \ell(x_k^\star, ~ \pi^\star(x_k^\star))\big] \\
        & \leq \lambda Q^\infty(x_S) + (1-\lambda) Q^\star(x_S),
        \end{aligned}
    \end{equation}
    where the first inequality is from the convexity of $\ell(\cdot, \cdot)$ from Assumption \ref{assum: Stage cost}, and the second inequality is from \eqref{eq: decreasing sequence sum of stage costs}.
    
    Furthermore, we have the following:
    \begin{equation} \label{eq: Qinf mid eq}
        \begin{aligned}
            & ~~~~ Q^\infty(x) \\
            & \leq \hat{\mathcal{T}}Q^\infty(x) \\
            & \leq \ell(x, \pi^c(x)) + {\scriptstyle\sum}_{m=1}^M p_m Q^\infty(Ax + B\pi^c(x) + \tilde{w}_m) \\
            & \leq \ell(x, \pi^c(x)) + \mathbb{E}_w [Q^\infty(Ax + B\pi^c(x) + w)] + L^\infty \delta,
        \end{aligned}
    \end{equation}
    where the first inequality is from Proposition \ref{prop: Q = That Q}, the second inequality is because $\pi^c(\cdot)$ is a feasible solution of \eqref{eq: approximate bellman equation} from Proposition \ref{prop: pic feasible} while $\hat{\mathcal{T}}Q^\infty(x)$ is an optimal function, and the third inequality is from Proposition \ref{prop: TQ tools}.
    By recursively applying \eqref{eq: Qinf mid eq}, we have the followings:
    \begin{equation} \label{eq: Qinf leq c sum + TLdelta}
        \begin{aligned}
            & ~~~~ Q^\infty(x_S)\\ 
            & \leq \ell(x_S, \pi^c(x_S)) + \mathbb{E}_{w_0} [Q^\infty(Ax_S + B\pi^c(x_S) + w_0)] + L^\infty \delta \\
            & \leq \ell(x_S, \pi^c(x_S)) + \mathbb{E}_{w_0} [\ell(x_1^c, \pi^c(x_1^c))] \\
            & ~~~ + \mathbb{E}_{w_{0:1}} [Q^\infty(Ax_1^c + B\pi^c(x_1^c) + w_1)] + 2L^\infty \delta \\
            & ~~~ \cdots\\
            & \leq \mathbb{E}_{w_{0:T-1}}\Big[{\scriptstyle\sum}_{k=0}^{T-1} \ell(x_k^c, ~ \pi^c(x_k^c)) \Big] + TL^\infty \delta,
        \end{aligned}
    \end{equation}
    where the last inequality holds because the task ends at time step $k=T$. Therefore, $\forall w_{T-1} \in \mathcal{W}$,  $Ax_{T-1}^c + B\pi^c(x_{T-1}^c) + w_{T-1} \in  \mathcal{O}$, and $Q^\infty(Ax_{T-1}^c + B\pi^c(x_{T-1}^c) + w_{T-1}) = 0$.

    From \eqref{eq: Qc <= lam Qinf + 1-lam Qstar} and \eqref{eq: Qinf leq c sum + TLdelta}, we have the following:
    \begin{equation*}
        \begin{aligned}
            & ~~~~~~~~~ Q^\infty(x) \leq \lambda Q^\infty(x_S) + (1-\lambda) Q^\star(x_S) + TL^\infty \delta \\
            & \iff Q^\star(x_S) \geq Q^\infty(x) - \frac{1}{1-\lambda} TL^\infty \delta,
        \end{aligned}
    \end{equation*}
    which proves the right-hand side of the inequality. 

    We now prove $Q^\infty(x_S) \geq Q^\star(x_S)$.
    As the MPC problem \eqref{eq:MPC discrete} imposes the state and input constraint and is always feasible from Proposition \ref{thm: recursive feasibility}, we have the following:
    \begin{equation}
        \begin{aligned}
            & x_{k+1}^\infty  = A x_k^\infty + B \pi^{\infty}(x_k^\infty) + w_k, \\
            & x_0^\infty = x_S, ~ w_k \sim p(w), \\
            & x_k^\infty \in \mathcal{SS}^\infty \subset \mathcal{X}, ~ \pi^{\infty}(x_k^\infty) \in \mathcal{U},  ~\forall w_k \in \mathcal{W},  \\
            & \forall k \geq 0.
        \end{aligned}
    \end{equation}
    This shows that the constraints of \eqref{eq:ftocp} are satisfied, thus $\pi^{\infty}(\cdot)$ is a feasible control policy of \eqref{eq:ftocp}.
    Moreover, $Q^\star(x_S)$ is an optimal value function of \eqref{eq:ftocp}.
    Thus, $Q^\infty(x_S) \geq Q^\star(x_S)$. 
\end{proof}
\begin{remark}
Theorem \ref{thm: nbd near optimal} establishes that the estimated value function at \( x_S \) converges to a neighborhood of the optimal value function of \eqref{eq: general format of problem} at \( x_S \), with a radius of \( cT L^\infty \delta \). The key factors that influence the radius are the task characteristics, including the task completion time \( T \) and the Lipschitz constant \( L^\infty \); the explored space, as \( c \) increases when the converged state sequence \( x^\infty_k \) approaches the boundary of \( SS^\infty \) (i.e., small \( \lambda \)); and the sampling density, since \( \delta \) decreases as the number of well-distributed samples increases.
\end{remark}
\begin{remark}
From Assumption \ref{assum: muw assumption} in Sec. \ref{sec: Approx}, the bound $\delta(w,M)$ in \eqref{eq: M1 M2 assumption} decreases as the number of discretized disturbances increases. Moreover, \( \delta = \max_{w \in \mathcal{W}} \delta(w,M) \). Thus, the steady-state function \( Q^\infty(\cdot) \) converges arbitrarily close to the optimal value function \( Q^\star(\cdot) \) with a sufficiently large number of sampled disturbances.
\end{remark}

\section{Simulation Results}
This section presents a numerical example based on the authors' other paper \cite{joa2024energy}, which solves the energy consumption minimization problem under localization uncertainty for connected, automated vehicles and has been investigated via experimental vehicle tests. Note that \cite{joa2024energy} builds on the theoretical framework presented in this paper. While the system and model for localization uncertainty remain unchanged, the parameters, stage cost, and target set have been modified for a simpler presentation. The parameters, stage cost, and target set used in the experimental test can be found in \cite{joa2024energy}.
\subsection{Problem Setup}
We consider an infinite horizon OCP \eqref{eq:ftocp} with the following parameters.
We consider the system:
\begin{equation} \label{eq: system param}
    \begin{aligned}
        & A=\begin{bmatrix} 1 & 1\\ 0 & 1 \end{bmatrix}, ~B=\begin{bmatrix} 0.5 \\ 1 \end{bmatrix}, \mathcal{W} = \Big\{w ~| ~||w||_\infty \leq \frac{1}{3}\Big\}, \\
        & x_S = \begin{bmatrix} 30 & 5 \end{bmatrix}^\top,
    \end{aligned}
\end{equation}
$p(w)$ is a uniform distribution at $\mathcal{W}$

The constraints are defined as follows:
\begin{equation*} 
    \begin{aligned}
        & \mathcal{X} = \Big\{x ~\Big|~ \begin{bmatrix} -10 \\ -40 \end{bmatrix} \leq x \leq \begin{bmatrix} 40 \\ 10 \end{bmatrix} \Big\}, ~\mathcal{U} = \{u ~|~ -5 \leq u \leq 5 \}.
    \end{aligned}
\end{equation*}

The target set $\mathcal{O}$ is set to the minimum robust positive invariant set for the closed-loop system \eqref{eq:system} controlled by linear policy $u(x) = K x$ where $K$ is the optimal LQR gain with the parameters $Q_\mathrm{LQR} = \begin{bmatrix} 1 & 0 \\ 0 & 1 \end{bmatrix}$ and $R_\mathrm{LQR} = 0.01$.
We utilize this minimum robust positive invariant set to design a tube MPC for initialization.

The stage cost is defined as follows:
\begin{equation} \label{eq: stage cost example}
    \begin{aligned}
        \ell(x,u) = \lVert x\rVert_\mathcal{O}^2 + \lVert u \rVert_\mathcal{KO}^2,
    \end{aligned}
\end{equation}
where
\begin{equation}
\begin{aligned}
    & \lVert x\rVert_\mathcal{O} = \min_{d\in \mathcal{O}} || x-d||_2, ~~\lVert u \rVert_\mathcal{KO} = \min_{d\in \mathcal{KO}} || u - d||_2.
\end{aligned}
\end{equation}
 Note that \eqref{eq: stage cost example} is differentiable as it is a sum of the square of the distance to the convex polytope.

We set the number of the discretized disturbances $M$ as $100$.
To calculate $p_m$ \eqref{eq: pm}, we utilize the method in Appendix \ref{sec: calculating muw LP}.

\subsection{Comparison between the proposed method and LMPC with a Certainty Equivalent cost}
This subsection compares the proposed method with an LMPC that minimizes a certainty equivalent cost, i.e., a nominal cost \cite{rosolia2018stochastic}. Both algorithms utilize data from previous tasks and update their safe set and terminal cost for each episode. However, the proposed method minimizes the expected sum of the stage cost, while the LMPC in \cite{rosolia2018stochastic} minimizes the sum of the certainty equivalent stage cost.

To evaluate the performance of each algorithm, we conduct 100 Monte-Carlo simulations of the system in \eqref{eq:system} with \eqref{eq: system param} as the parameters, controlled by each algorithm without updating the safe set and the value function for each episode $j$. We then investigate the sample mean of the total cost incurred by the system.
After completing the 100 Monte-Carlo simulations of episode $j$, we update the safe set and the value function using data from one of the 100 simulations. Then, we conduct another 100 Monte-Carlo simulations for episode $j+1$. We repeat this process iteratively to evaluate the algorithms.

Fig. \ref{fig:Performance comparison} shows the results of the comparison between the proposed method and \cite{rosolia2018stochastic}. 
At episode $10$, the proposed method achieves a total cost of $4712.27$, while \cite{rosolia2018stochastic} achieves $5359.98$, resulting in a $13.75 \%$ improvement for the proposed method.
\cite{rosolia2018stochastic} converges faster than the proposed method, as it has a longer horizon. However, its cost function differs from the expected cost, and its control policy parameterization makes the performance conservative. 
On the other hand, the proposed method takes a few more episodes to converge, but as it minimizes the approximation of the expected cost \eqref{eq: terminal cost approximation}, it shows better performance in terms of the realized cost.
\begin{figure}[ht]
\begin{center}
\includegraphics[width=0.7\linewidth,keepaspectratio]{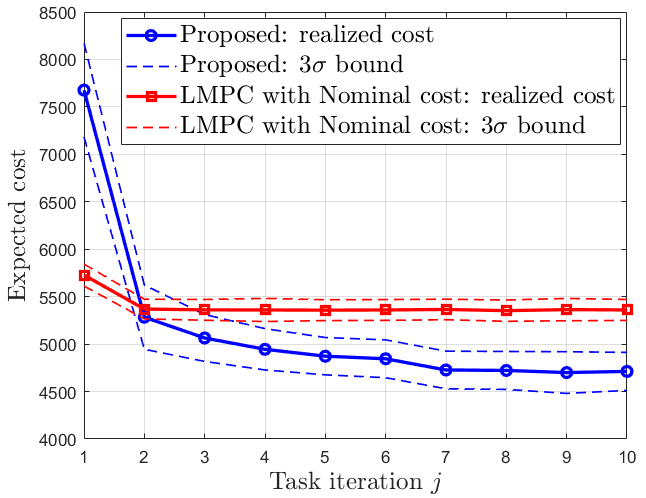}
\vspace{-1.0em}
\caption{sample mean of the realized cost for episode $j$}
\label{fig:Performance comparison}
\end{center}
\vspace{-2.0em}
\end{figure}

\subsection{Comparison between the proposed MPC and the Value iteration method}
In this subsection, we compare the value function of the proposed method algorithm with the value function calculated from the value iteration method in \cite{yang2020convex}, which grids the state space and approximates the Bellman operator as a convex optimization.
We grid the state space with a fixed grid size $1$.

\subsubsection{Value function comparison} In Fig. \ref{fig:value function comparison over j}, we present a comparison of the value function for both algorithms at $x_S = \begin{bmatrix} 30 & 5 \end{bmatrix}^\top$.
There are two notable points.
First, as proved in Proposition \ref{prop: nonincreasing value function}, the value function of the proposed algorithm at point $x_S$, $Q^j(x_S)$ monotonically decreases over episodes.
Second, as the episode $j$ increases, the error between $Q^j(x_S)$ and the value function at point $x_S$ calculated from the value iteration \cite{yang2020convex} decreases. 
\begin{figure}[ht]
\begin{center}
\includegraphics[width=0.7\linewidth,keepaspectratio]{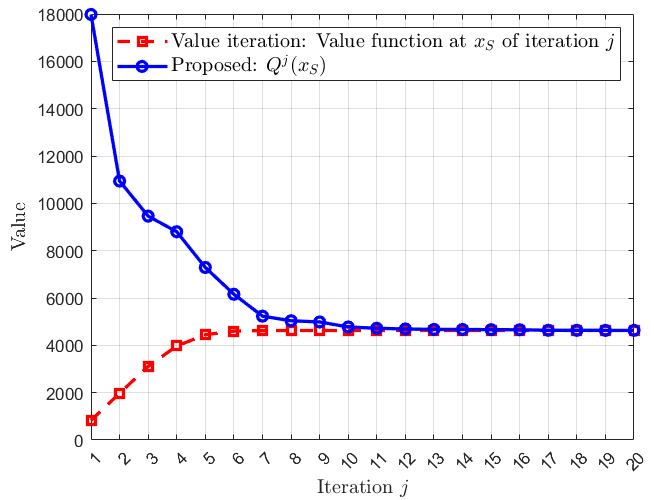}
\vspace{-1.0em}
\caption{Comparison: (a) Value function at $x_S$ calculated from the value iteration method and (b) Terminal cost $Q^j(x_S)$ of the proposed MPC.}
\label{fig:value function comparison over j}
\end{center}
\vspace{-1.0em}
\end{figure}

\subsubsection{Computation time comparison} In terms of computation time, the value iteration \cite{yang2020convex} took 11hr 42min for 20 iterations while the proposed method took 36min 53sec for 20 episodes including online and offline processes illustrated in Fig. \ref{fig:blockdiagram}. Thus, for this scenario, the proposed method is 19.04 times faster than the value iteration method. 

We analyze the time complexity of both algorithms. Solving a convex Quadratic Programming (QP) problem requires polynomial time \cite{ye1989extension}. Let \( f(n) \) represent the polynomial time complexity of solving a convex QP with input size \( n \).

As illustrated in Fig. \ref{fig:blockdiagram}, the proposed algorithm solves the MPC problem \eqref{eq:MPC discrete} (a convex QP) and then updates \(\mathcal{SS}^j\) and \( Q^j(\cdot) \) in the "Learning" block. The primary bottleneck is repeatedly solving the MPC problem, which must be solved \( |\mathbf{X}^{j}| - |\mathbf{X}^{j-1}| \) times. Specifically, for each newly added state in $|\mathbf{X}^{j}|$ \eqref{eq: state data}, the corresponding optimal solution is computed, as described in \eqref{eq: exp input matrix} and \eqref{eq: input data}. 
The input size of the MPC problem is \( n_u + M \times |\mathbf{X}^{j-1}| \), as it is reformulated into a single optimization problem by incorporating the value function \eqref{eq: value function update} into \eqref{eq:MPC discrete}. Therefore, the overall time complexity for each iteration is \( O(f(n_u + M |\mathbf{X}^{j-1}|) \times (|\mathbf{X}^{j}| - |\mathbf{X}^{j-1}|)) \). 

As the number of episodes \( j \) increases, the computation time grows polynomially because the size of the dataset \( |\mathbf{X}^{j-1}| \) increases according to \eqref{eq: state data}. Simultaneously, Proposition \ref{prop: nonincreasing value function} and the results shown in Fig. \ref{fig:Performance comparison} and Fig. \ref{fig:value function comparison over j} demonstrate an improvement in performance as \( j \) increases. Thus, there are trade-offs between computational efficiency and performance.

In \cite{yang2020convex}, a similar convex QP is solved for each gridded state (see \cite[eq. (3.1)]{yang2020convex}). However, unlike our algorithm, the value function in \cite{yang2020convex} is approximated over a fixed grid of states rather than from collected data. Let \( N_\text{grid} \) represent the number of grid points per dimension in the state space, resulting in a worst-case complexity of \( O(f(n_u + N_\text{grid}^{n_x}) \times N_\text{grid}^{n_x}) \). As the grid resolution increases, the computational cost grows exponentially, which is also shown in \cite[Table 1]{yang2020convex}.

A direct comparison of the two methods is not fair, as the baseline is a `global' method and the proposed one is a `local' method as illustrated in Fig. \ref{fig:value function comparison}. Nevertheless, we remark that the baseline method \cite{yang2020convex} exhibits exponential time complexity with respect to the state dimension \( n_x \). In contrast, the proposed method avoids this exponential scaling. While \( n_x \) does not explicitly appear in the big-O notation of our method, it affects the number of constraints, leading to a polynomial time complexity with respect to \( n_x \) in our approach.

\begin{figure}[ht]
\begin{center}
\includegraphics[width=0.48\linewidth,keepaspectratio]{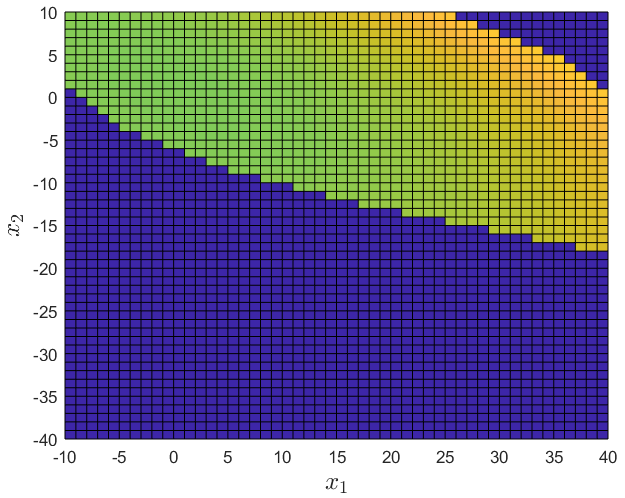}
\includegraphics[width=0.48\linewidth,keepaspectratio]{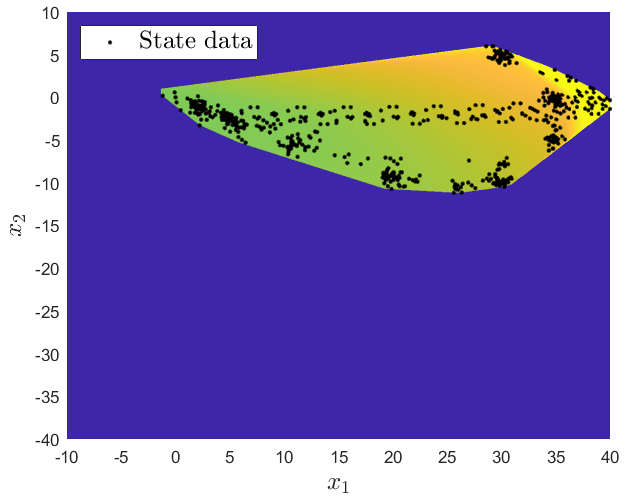}
\vspace{-1.0em}
\caption{Comparison: (a) Value function calculated from the value iteration method and (b) Terminal cost $Q^j(\cdot)$ of the proposed MPC. For both figures, the light region indicates the feasible region. The dark blue grid represents the infeasible region for the value iteration method. For the proposed method, the dark blue grid represents either the infeasible region or the region that has not been visited. The proposed method calculates the value function only within the local region based on available data, whereas value iteration computes the value function across the entire region.}
\vspace{-1.0em}
\label{fig:value function comparison}
\end{center}
\end{figure}

\section{Conclusion}
We propose a Model Predictive Control (MPC) method for approximately solving a class of infinite horizon optimal control problems aimed at minimizing the expected sum of stage costs for linear uncertain systems with bounded, state-additive random disturbances. 
The control objective is to steer the system from a given initial state to a target set.
The proposed method achieved robust satisfaction of state and input constraints and convergence in probability to the target set.
We further proved that as the number of episodes increases, the value function monotonically decreases, and under some assumptions, it converges to the optimal value function.
With numerical simulations, we demonstrated that the proposed method obtained a 13.75 \% performance improvement compared to the LMPC with a certainty equivalent cost. Moreover, compared to the value iteration, the proposed method is up to 19 times faster, while its value function converges towards that of the value iteration. 
Future work may explore data-driven approaches to solve the problem without relying on the assumption of an existing sub-optimal controller.

\printbibliography 
\balance
\appendix
\section{Appendix}
\subsection{Calculating the coefficient $\mu_m(w)$ in \eqref{eq: cvx sum with mu w} and $p_m$ \eqref{eq: pm}} \label{sec: calculating muw}
\subsubsection{Using a multi-parametric linear programming (mp-LP)}
We can calculate $\mu_m(w)$ by solving the following multi-parametric linear programming (mp-LP) as follows: 
\begin{equation} \label{eq: mplp for mu_w}
    \begin{aligned}
        & J^\star(w,M) = \min_{\mu_{1:M}(w)}~ {\scriptstyle\sum}_{m=1}^{M} \lVert \tilde{w}_m \rVert_2^2 \mu_m(w) \\
        & ~~~~~~~~~~~~~~~~~~ \textnormal{s.t.,} ~~~~  w = {\scriptstyle\sum}_{m=1}^M \mu_m(w) \tilde{w}_m, \\
        & ~~~~~~~~~~~~~~~~~~ \qquad ~~ {\scriptstyle\sum}_{m=1}^M \mu_m(w) = 1, \\
        & ~~~~~~~~~~~~~~~~~~ \qquad ~~ \mu_m(w) \geq 0, ~ \forall m \in \{1,\cdots,M\}. 
    \end{aligned}
\end{equation}
The solution of \eqref{eq: mplp for mu_w}, $\mu_m^\star(w)$, is given by an explicit piecewise affine function \cite{borrelli2017predictive}.

Several methods can compute the expectation \eqref{eq: pm} offline.
If probability distribution is available, one can use numerical integration. 
When it is not available, one can use recorded disturbances to perform the Monte Carlo integration.

\subsubsection{Using multiple linear programming (LP)} \label{sec: calculating muw LP}
As mp-LP might be computationally intractable for high-dimensional systems, we propose an alternate solution.
Considering that using Monte Carlo integration to calculate \eqref{eq: pm}, then we can write \eqref{eq: pm} for each sampled disturbance $d_n$ as follows:
\begin{equation} \label{eq: pm2}
    \begin{aligned}
        p_m = \frac{1}{N_\mathrm{sampled}} {\sum}_{n=1}^{N_\mathrm{sampled}} \mu_m(d_n)
    \end{aligned}
\end{equation}
for a large number $N_\mathrm{sampled}$.
For each $d_n$, we can calculate $\mu_m(d_n)$ by solving the following linear programming (LP): 
\begin{equation} \label{eq: lp for mu_w}
    \begin{aligned}
        & J^\star_\mathrm{LP}(d_n,M) = \min_{\mu_{1:M}(d_n)} ~ {\scriptstyle\sum}_{m=1}^{M} \lVert \tilde{w}_m \rVert_2^2 \mu_m(d_n) \\
        & ~~~~~~~~~~~~~~~~~~~~~~ \textnormal{s.t.,} ~~~~  d_n = {\scriptstyle\sum}_{m=1}^M \mu_m(d_n) \tilde{w}_m, \\
        & ~~~~~~~~~~~~~~~~~~~~~~ \qquad ~~ {\scriptstyle\sum}_{m=1}^M \mu_m(d_n) = 1, \\
        & ~~~~~~~~~~~~~~~~~~~~~~ \qquad ~~ \mu_m(d_n) \geq 0, ~ \forall m \in \{1,\cdots,M\}. 
    \end{aligned}
\end{equation}
After solving the LP \eqref{eq: lp for mu_w} for all $n \in \{1,\cdots,N_\mathrm{sampled}\}$, we calculate \eqref{eq: pm} as in \eqref{eq: pm2}.

\subsubsection{Prove that Assumption \ref{assum: muw assumption} is satisfied.}
We will prove that Assumption \ref{assum: muw assumption} is satisfied for the optimal solution of the problem \eqref{eq: mplp for mu_w}.
Based on this proof, it is straightforward that the optimal solution of \eqref{eq: lp for mu_w} satisfies Assumption \ref{assum: muw assumption}, which will be described at the end of this section.

It is trivial that there exists a bound $\delta(w,M)$ as the discretized disturbances $\tilde{w}_m, ~\forall m \in \{1,\cdots,M\}$ are located within the bounded set $\mathcal{W}$.
In the following, we will prove that as the number of the discretized disturbances goes to infinity, the bound $\delta(w,M)$ in \eqref{eq: delta_bnd} converges to zero.

We consider the problem \eqref{eq: mplp for mu_w}.
For a given $w$, let $\mu^\star_m(w)$ denote the optimal solution of the problem \eqref{eq: mplp for mu_w}.
Moreover, let $\tilde{p}_i \in \{\tilde{w}_1, \cdots, \tilde{w}_M\}$ for $i \in \{1, \cdots,M_p\}$ denote the discretized disturbances where the associated coefficient $\mu^\star_m(w)$ are non-zero.
Let $a_i(w), ~i \in \{1, \cdots,M_p\}$ denote associated coefficient of $\tilde{p}_i, ~i \in \{1, \cdots,M_p\}$.
Then, we have the following:
\begin{equation} \label{eq: J = aP}
    \begin{aligned}
        & J^\star(w,M) = {\scriptstyle\sum}_{i=1}^{M_p} \lVert \tilde{p}_i \rVert_2^2 a_i(w) \\
        & w = {\scriptstyle\sum}_{i=1}^{M_p} a_i(w) \tilde{p}_i.
    \end{aligned}
\end{equation}
\begin{proposition} \label{prop: no Q inside of P}
    For a given $w$, there do not exist discretized disturbances $\tilde{q}_j\in \{\tilde{w}_1, \cdots, \tilde{w}_M\}$ for $j \in \{1,\cdots,M_q\}$ that satisfies the following conditions:
    \begin{equation} \label{eq: assum muw prop}
        \begin{aligned}
            & w = {\scriptstyle\sum}_{j=1}^{M_q} b_j(w) \tilde{q}_j, \\
            & {\scriptstyle\sum}_{j=1}^{M_q} b_j(w) = 1, ~b_j(w) \geq 0, ~ \forall j \in \{1,\cdots,M_q\}, \\
            & \text{conv}\bigg\{\bigcup_{j=1}^{M_q} \tilde{q}_j\bigg\} \subset \text{relint}\bigg(\text{conv}\bigg\{\bigcup_{i=1}^{M_p} \tilde{p}_i\bigg\}\bigg).
        \end{aligned}
    \end{equation}
    where $\text{relint}(\cdot)$ represents the relative interior of the set.
\end{proposition}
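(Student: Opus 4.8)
The plan is a proof by contradiction resting on \emph{strict} convexity together with LP duality for \eqref{eq: mplp for mu_w}, which for the fixed $w$ is an ordinary linear program in $\mu_{1:M}(w)$. First I would record a dual certificate. The primal is feasible, since $\mathcal{W}$ is the convex hull of its vertices and these are among the $\tilde w_m$, and its objective is bounded below by $0$; hence strong duality yields an optimal dual pair $(\lambda,\mu_0)$ with dual feasibility $\lVert\tilde w_m\rVert_2^2\ge\lambda^\top\tilde w_m+\mu_0$ for every $m\in\{1,\dots,M\}$ and complementary slackness $\lVert\tilde w_m\rVert_2^2=\lambda^\top\tilde w_m+\mu_0$ whenever $\mu_m^\star(w)>0$, i.e. at each $\tilde p_i$. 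Writing $h(v):=\lVert v\rVert_2^2-\lambda^\top v-\mu_0$, we thus have $h\ge 0$ on all discretized disturbances while $h(\tilde p_i)=0$ for $i\in\{1,\dots,M_p\}$; note that $h$ is \emph{strictly} convex, being $\lVert\cdot\rVert_2^2$ minus an affine function (its Hessian is $2I\succ 0$).

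Next I would close the argument geometrically. Suppose, for contradiction, that $\tilde q_1,\dots,\tilde q_{M_q}$ satisfy \eqref{eq: assum muw prop}. Since $\text{conv}\{\bigcup_j\tilde q_j\}$ is nonempty and strictly contained in $\text{relint}(\text{conv}\{\bigcup_i\tilde p_i\})$, the polytope $P:=\text{conv}\{\bigcup_i\tilde p_i\}$ has affine dimension at least one. Fix any $\tilde q_j$; by hypothesis $\tilde q_j\in\text{relint}(P)$, so $\tilde q_j$ is not an extreme point of $P$ and therefore admits a representation $\tilde q_j=\sum_i c_{ji}\tilde p_i$ with $c_{ji}\ge 0$, $\sum_i c_{ji}=1$, in which at least two distinct $\tilde p_i$ carry positive weight. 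Because $h$ is strictly convex and this convex combination is nontrivial, strict Jensen gives
\begin{equation*}
  h(\tilde q_j)=h\Big({\textstyle\sum_i}c_{ji}\tilde p_i\Big)<{\textstyle\sum_i}c_{ji}\,h(\tilde p_i)=0,
\end{equation*}
whereas $\tilde q_j$ is one of the $\tilde w_m$, so dual feasibility forces $h(\tilde q_j)\ge 0$ — a contradiction. Hence no family satisfying \eqref{eq: assum muw prop} exists, and the identical reasoning applies to the optimal solution of the LP \eqref{eq: lp for mu_w}, since it uses only that each $\tilde p_i$ carries nonzero optimal weight and each $\tilde q_j$ is a discretized disturbance.

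The core inequality is routine; the points needing care are the degenerate configurations. If $P$ is a single point — which happens precisely when $w$ is itself a discretized disturbance, so $M_p=1$ and $\tilde p_1=w$ — then $\text{relint}(P)=P$, the inclusion in \eqref{eq: assum muw prop} cannot be proper, and there is nothing to prove; this is why ``$\subset$'' there must be read as proper inclusion. One also has to exclude the case $\tilde q_j=\tilde p_{i_0}$ for some support point: that would place $\tilde p_{i_0}$ in $\text{relint}(P)$, and applying the Jensen step to $\tilde p_{i_0}$ in place of $\tilde q_j$ would give $h(\tilde p_{i_0})<0$, contradicting $h(\tilde p_{i_0})=0$ — equivalently, every support point is necessarily a vertex of $P$, which is exactly what makes the representation step legitimate. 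The main thing to handle with precision is thus the bookkeeping between relative interiors, extreme points, and affine dimension, not any hard estimate.
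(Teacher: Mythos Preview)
Your proof is correct and takes a genuinely different route from the paper. The paper argues on the \emph{primal} side: assuming such $\tilde q_j$ exist, it uses strict convexity of $\lVert\cdot\rVert_2^2$ to show each lifted point $(\lVert\tilde q_j\rVert_2^2,\tilde q_j)$ sits strictly below the polytope $\mathcal{P}=\mathrm{conv}\{(\lVert\tilde p_i\rVert_2^2,\tilde p_i)\}$ in the first coordinate, and then concludes that the feasible choice $\mu=b$ supported on the $\tilde q_j$ has objective $\hat J(w)<J^\star(w,M)$, contradicting optimality. Your argument is \emph{dual}: strong LP duality produces an affine minorant $v\mapsto\lambda^\top v+\mu_0$ of $\lVert\cdot\rVert_2^2$ that is tight at every support point $\tilde p_i$ and slack at every other column, and strict convexity of the gap $h$ forces $h(\tilde q_j)<0$ for any discretized disturbance in $\mathrm{relint}(P)$, violating dual feasibility directly. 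Both hinge on the same strict-Jensen step; what differs is the certificate. Your route is cleaner in two respects: it never touches the first two lines of \eqref{eq: assum muw prop} (you prove the stronger fact that \emph{no} discretized disturbance whatsoever can lie in $\mathrm{relint}(P)$, irrespective of whether $w$ is in its convex hull), and it bypasses the paper's somewhat informal passage from ``each $\tilde q_j$ lies below $\mathcal{P}$'' to ``$\hat J(w)<J^\star(w,M)$.'' The paper's approach, in turn, avoids invoking LP duality and stays entirely within elementary convexity. Your discussion of the degenerate case $M_p=1$ is also a point the paper glosses over.
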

\begin{proof}
    We use proof by contradiction. Suppose such discretized disturbances $\tilde{q}_j, ~j \in \{1,\cdots,M_q\}$ exist.
    Since $\tilde{q}_j$ is located within the relative interior of the convex hull of $\tilde{p}_i, ~i \in \{1, \cdots,M_p\}$ from \eqref{eq: assum muw prop}, each $\tilde{q}_j, ~j \in \{1, \cdots, M_q\}$ can be represented as a strict convex combination of $\tilde{p}_i, ~i \in \{1, \cdots,M_p\}$ as follows: 
    \begin{equation} \label{eq: cvx coeff p = q}
        \begin{aligned}
            & \tilde{q}_j = {\scriptstyle\sum}_{i=1}^{M_p} c_i \tilde{p}_i, ~j \in \{1, \cdots, M_q\}\\
            & {\scriptstyle\sum}_{i=1}^{M_p} c_i = 1, ~ c_i > 0.
        \end{aligned}
    \end{equation}
    Moreover, as a quadratic function is strictly convex, we have the following for all $j \in \{1, \cdots, M_q\}$:
    \begin{equation} \label{eq: q^2 < ap^2}
        \tilde{q}_j^\top \tilde{q}_j = \big({\scriptstyle\sum}_{i=1}^{M_p} c_i \tilde{p}_i \big)^\top \big({\scriptstyle\sum}_{i=1}^{M_p} c_i \tilde{p}_i \big) < {\scriptstyle\sum}_{i=1}^{M_p} c_i \tilde{p}_i^\top \tilde{p}_i.
    \end{equation}
    \eqref{eq: q^2 < ap^2} holds for any convex coefficient that satisfies \eqref{eq: cvx coeff p = q}.
    
    Now, consider the following two sets:
    \begin{equation}
        \begin{aligned}
            & \mathcal{P} = \text{conv}\bigg\{\bigcup_{i=1}^{M_p} \bigg\{ \begin{bmatrix}
                \tilde{p}_i^\top \tilde{p}_i \\ \tilde{p}_i
            \end{bmatrix} \bigg\}\bigg\}, ~ \mathcal{Q} = \text{conv}\bigg\{\bigcup_{j=1}^{M_q} \bigg\{ \begin{bmatrix}
                \tilde{q}_j^\top \tilde{q}_j \\ \tilde{q}_j
            \end{bmatrix} \bigg\}\bigg\},
        \end{aligned}
    \end{equation}
    Note that $J^\star(w,M) = {\scriptstyle\sum}_{i=1}^{M_p} \lVert \tilde{p}_i \rVert_2^2 a_i(w) = \sum_{i=1}^{M_p} \tilde{p}_i^\top \tilde{p}_i a_i(w)$ and $w = \sum_{i=1}^{M_p} a_i(w) \tilde{p}_i$ from \eqref{eq: J = aP}. This implies that a point $\begin{bmatrix} J^\star(w,M) & w^\top \end{bmatrix}^\top$ can be written as a convex combination of $\begin{bmatrix} \tilde{p}_i^\top \tilde{p}_i & \tilde{p}_i^\top \end{bmatrix}^\top, ~i \in \{1,\cdots,M_p\}$, which are the elements of $\mathcal{P}$.
    Thus, $\begin{bmatrix} J^\star(w,M) & w^\top \end{bmatrix}^\top \in \mathcal{P}$.
    
    Similarly, let $\hat{J}(w) = \sum_{j=1}^{M_q} \tilde{q}_j^\top \tilde{q}_j b_j(w)$. From \eqref{eq: assum muw prop}, we have that $w = \sum_{j=1}^{M_q} b_j(w) \tilde{q}_j$. Thus, $\begin{bmatrix} \hat{J}(w) & w^\top \end{bmatrix}^\top \in \mathcal{Q}$.
    Furthermore, from \eqref{eq: q^2 < ap^2}, all vectors $\begin{bmatrix} \tilde{q}_j^\top \tilde{q}_j & \tilde{q}_j^\top \end{bmatrix}^\top, ~j \in \{1,\cdots,M_q\}$ are located below the set $\mathcal{P}$ with respect to the first coordinate. Based on these facts, $\begin{bmatrix} \hat{J}(w) & w^\top \end{bmatrix}$ is located below the set $\mathcal{P}$ with respect to the first coordinate. Thus $\hat{J}(w) < J^\star(w,M)$.

    Finally, $\hat{J}(w) < J^\star(w,M)$ contradicts that $J^\star(w,M)$ is the optimal cost of the problem \eqref{eq: mplp for mu_w}.
    This proves the claim.
\end{proof}

Proposition \ref{prop: no Q inside of P} implies the following:
Consider the scenario when we continuously sample the disturbance within $\mathcal{W}$ while maintaining the previous $M_1$ samples until there exist discretized disturbances $\tilde{q}_j, ~j \in \{1,\cdots,M_q\}$ that satisfy \eqref{eq: assum muw prop} for some $w$.
Then, we can find a smaller bound than the bound $\delta(w,M_1)$.
Specifically, we have the following:
\begin{equation}
    \exists N > M_1, ~\forall M_2 \geq N, ~\delta(w,M_1) > \delta(w,M_2).
\end{equation}
Therefore, the optimal solution of \eqref{eq: mplp for mu_w} satisfies Assumption \ref{assum: muw assumption}. 
When using \eqref{eq: lp for mu_w}, Proposition \ref{prop: no Q inside of P} can be proved by replacing $w$ with $d_n$ and $J^\star(w,M)$ with $J^\star_\text{LP}(d_n,M)$.

\subsection{Initialization using Tube MPC} \label{sec: tube MPC initialize}
For initialization, we have the following suboptimal controller that satisfies Assumption \ref{assum: initial solution}.
\begin{equation} \label{eq:tube ocp}
\begin{split}
    & \bar{V}^0_{0 \rightarrow T} ({x}_S) = \min_{\bar{u}_{0:T}^0} ~~ 0 \\
    & \qquad \qquad \qquad  \textnormal{s.t.,} ~~ \bar{x}_{k+1}^0 = A \bar{x}_{k}^0 + B \bar{u}_{k}^0,  \\
    & \qquad \qquad \qquad \qquad \, \bar{x}_{0}^0 = {x}_S,\\
    & \qquad \qquad \qquad \qquad \, \bar{x}_{k+1}^0 \in \mathcal{X} \ominus \mathcal{E}, ~ \Bar{u}_{k}^0 \in \mathcal{U} \ominus K\mathcal{E},\\
    & \qquad \qquad \qquad \qquad \, \bar{x}_{T+1}^0 \in \mathcal{O} \ominus \mathcal{E},  ~k \in \{0,\cdots,T\},
\end{split}
\end{equation}
where the superscript $^0$ is to denote an initialization, $\bar{x}_{k}^0$ and $\bar{u}_{k}^0$ are a nominal state and a nominal input at time step $k$, respectively, and  $\mathcal{E}$ is a robust positive invariant set for the autonomous system $x_{k+1} = (A+BK)x_k + w_k, ~ w_k \in \mathcal{W}$ and $K$ is a given state feedback gain such that $A+BK$ is Hurwitz. 
\begin{remark}
This problem is a tube MPC \cite{mayne2005robusto} with horizon length $N=T+1$. By \cite[Prop.1]{mayne2005robusto}, the state $x_k^0$ of the closed-loop system \eqref{eq:system} under the control policy $\pi_\mathrm{Tube}(x_k^0) = \bar{u}_{k}^{0\star} + K(x_k^0 - \bar{x}_{k}^{0\star})$ always satisfies the state constraint $\mathcal{X}$ while $\pi_\mathrm{Tube}(x_k^0) \in \mathcal{U}$.
Moreover, the terminal state $x_{T+1}^0 \in \mathcal{O}$.
\end{remark}
The initial safe set is defined in \eqref{eq: initial terminal constraint app}. 
The initial value function is defined in \eqref{eq: initial value function}.



\subsubsection{Construct the initial cost-to-go data vector $\mathbf{J}^{0}$}
In this subsection, we describe an initial cost-to-go data vector denoted as $\mathbf{J}^{0}$, where $i$-th element of $\mathbf{J}^{0}$, i.e., $[\mathbf{J}^{0}]_i$, corresponds to $[\mathbf{X}^{0}]_i$ and $[\mathbf{U}^{0}]_i$.
For $k \in \{0,\cdots,T+1\}$, we compute the cost-to-go value $J^0_k$ for each tube $\mathcal{E}_k$ \eqref{eq: tubes}, i.e., $f: \mathcal{E}_k \rightarrow J^0_k$, by solving the following optimization problem:
\begin{equation} \label{eq: worst case ctg for initialization}
\begin{split}
    &  J_k^0 = \max_{x} ~\ell(x, \pi_\mathrm{Tube}(x)) + J_{k+1}^0 \\
    & ~~~~~~~~ \textnormal{s.t.,} ~~x \in \mathcal{E}_k, 
\end{split}
\end{equation}
Note that $J_{T+1}^0 = 0$ since $\mathcal{E}_{T+1}\subseteq \mathcal{O}$.
If $[\mathbf{X}^{0}]_i$ falls within a tube $\mathcal{E}_k$, we assign the corresponding cost-to-go value $J_k^0$ to it.

As the objective function in \eqref{eq: worst case ctg for initialization} is convex and $\mathcal{E}_k$ is a polytope, a vertex of $\mathcal{E}_k$ is the global maximizer of \eqref{eq: worst case ctg for initialization} \cite[Prop. 1.3.4]{bertsekas2009convex}. 
Thus, we can solve this maximization problem \eqref{eq: worst case ctg for initialization} by enumerating the cost-to-go values of all vertices of $\mathcal{E}_k$.
The enumeration number is $T \cdot l_w$.

For the first $l_\mathcal{O}$ elements of the cost data vector $\mathbf{J}^{0}$, their corresponding state data in $\mathbf{X}^{0}$ are the vertices of $\mathcal{O}$.
Thus, the first $l_\mathcal{O}$ elements of the $\mathbf{J}^{0}$ are zeros.
The remaining elements of the $\mathbf{J}^{0}$ are assigned based on the tube to which the respective state belongs, i.e., if $[\mathbf{X}^{0}]_i \in \mathcal{E}_k$, the corresponding $[\mathbf{J}^{0}]_i = J_k^0$.
To sum up, we define the initial cost data vector $\mathbf{J}^{0}$ as follows:
\begin{equation} \label{eq: initial cost data}
\begin{split}
    & \mathbf{J}^{0} = [\underbrace{0, 0,..., 0}_{l_\mathcal{O}}, ~ \underbrace{J_0^0,\cdots, J_0^0}_{l}, \cdots, \underbrace{J_{T+1}^0,\cdots, J_{T+1}^0}_{l}].
\end{split}
\end{equation}
    \subsubsection{Prove that the cost-data vector \eqref{eq: initial cost data} satisfies the condition \eqref{eq: Q0 condition}}
    For the simplicity of the proof, we define the function $J^0(\cdot)$ as follows:
    \begin{equation} \label{eq: def of J^0}
    J^0(x)= 
    \begin{cases}
        J_k^0, & \text{if } x \in  \mathcal{E}_k, ~k\in \{0,\cdots,T\},\\
        0, & \text{else if } x \in \mathcal{O},\\
        \infty,              & \text{otherwise}.
    \end{cases}
    \end{equation}
    Thus, for any $i$-th column of $\mathbf{X}^0$, the following is satisfied:
    \begin{equation}
        J^0([\mathbf{X}^0]_i) = [\mathbf{J}^0]_i.
    \end{equation}

    By construction of the tubes $\mathcal{E}_k, ~ k\in \{0,\cdots,T\}$ \eqref{eq: tubes} and by Assumption \ref{assum: Robust Positive Invariance}, for any state in $\mathbf{X}^0$, the corresponding input in $\mathbf{U}^{0}$ will steer the system \eqref{eq:system} inside one of the tubes \eqref{eq: tubes} or the target set $\mathcal{O}$. Therefore, we have that:
    \begin{equation} \label{eq: const of J next state}
        J^0(A[\mathbf{X}^{0}]_i + B[\mathbf{U}^{0}]_i + w) = const. < \infty, ~ \forall w \in \mathcal{W}.
    \end{equation}
    
    In \eqref{eq: worst case ctg for initialization}, we define the cost-to-go value by taking a maximization over the tube while setting the cost-to-go value in the target set $\mathcal{O}$ to zero.
    This implies the following inequality is satisfied for all elements of $\mathbf{X}^{0}$:
    \begin{equation} \label{eq: J inequality}
    \begin{aligned}
        & J^0([\mathbf{X}^{0}]_i) \\
        & \geq \ell([\mathbf{X}^{0}]_i,[\mathbf{U}^{0}]_i) + J^0(A[\mathbf{X}^{0}]_i + B[\mathbf{U}^{0}]_i + w), ~ \forall w \in \mathcal{W},
    \end{aligned}
    \end{equation}
    From \eqref{eq: const of J next state}, we know that $J^0(A[\mathbf{X}^{0}]_i + B[\mathbf{U}^{0}]_i + w)$ is constant for any realization of $w \in \mathcal{W}$.
    Thus, we can rewrite \eqref{eq: J inequality} as:
    \begin{equation} \label{eq: J inequality reform}
    \begin{aligned}
        & J^0([\mathbf{X}^{0}]_i) \\
        & \geq \ell([\mathbf{X}^{0}]_i,[\mathbf{U}^{0}]_i) + {\scriptstyle\sum}_{m=1}^M p_m J^0(A[\mathbf{X}^{0}]_i + B[\mathbf{U}^{0}]_i + \tilde{w}_m),
    \end{aligned}
    \end{equation}
    Remind that $\tilde{w}_m$ is the $m$-th discretized disturbance.
    For brevity of the proof, we will denote $A[\mathbf{X}^{0}]_i + B[\mathbf{U}^{0}]_i + \tilde{w}_m$ as $x_{i,m}^+$ in the rest of the proof.

    
    Next, we want to prove:
    \begin{equation} \label{eq: J^0 > Q^0 claim}
        J^0(x_{i,m}^+) \geq Q^0(x_{i,m}^+).
    \end{equation}
    As aforementioned in \eqref{eq: const of J next state}, $x_{i,m}^+$ belongs to either the target set $\mathcal{O}$ or one of the tubes $\mathcal{E}_k(\bar{x}_k^{0\star}), ~ k\in{1,\cdots,T}$. Let us denote the set to which $x_{i,m}^+$ belongs as $S_{i,m}^+$.
    Then, we can express $x_{i,m}^+$ as a convex combination of the vertices of the set $S_{i,m}^+$ which are the elements of $\mathbf{X}^0$. 
    Let $\bm{\lambda}^v$ is the associated convex coefficients, i.e., $\mathbf{X}^0 \bm{\lambda}^v = x_{i,m}^+$.
    Moreover, $J^0(x_{i,m}^+)$ is constant over the tube $S_{i,m}^+$.
    Thus, we have the following:
    \begin{equation}
    \begin{aligned}
        & \mathbf{J}^0 \bm{\lambda}^v = J^0(x_{i,m}^+), \\
        & \mathbf{X}^0 \bm{\lambda}^v = x_{i,m}^+, \\
        & \bm{\lambda}^v \geq \bm{0}, ~ \bm{1}^\top\bm{\lambda}^v = 1.  
    \end{aligned}
    \end{equation}
    This implies $\bm{\lambda}^v$ is a feasible solution of the mp-LP problem in \eqref{eq: initial value function} and thus $J^0(x_{i,m}^+)\geq Q^0(x_{i,m}^+)$.
    Therefore, from \eqref{eq: J inequality reform} and $J^0(x_{i,m}^+)\geq Q^0(x_{i,m}^+)$ \eqref{eq: J^0 > Q^0 claim}, the claim is proved.

\subsection{Robust Reachable Set \cite{borrelli2017predictive}} \label{sec: robust reachable set}
\begin{definition}
(Robust Successor Set): Given a policy $\pi(\cdot)$ and the closed-loop system $x_{k+1} = Ax_k + B \pi(x_k) + w_k, ~w_k \in \mathcal{W}$, the robust successor set from the $\mathcal{S}$ is defined as:
\begin{equation*}
    \begin{split}
        & \mathrm{Succ}(S, \mathcal{W}, \pi) = \{x_{k+1}~|~\exists x_k \in \mathcal{S}, \exists w_k \in \mathcal{W}, \qquad \\
        & \quad \,\,\, \qquad \qquad \qquad ~~~~~~~~~~ x_{k+1} = A x_k + B \pi(x_k) + w_k \}.
    \end{split}
\end{equation*}
\end{definition}
Given the set $\mathcal{S}$, the robust successor set $\mathrm{Succ}(S, \mathcal{E}_n, \pi)$ denotes the set of the states that the uncertain autonomous system can reach in one time step. 
\begin{definition} \label{def:Robust Reachable Set}
($N$ step Robust Reachable Set): Given a policy $\pi(\cdot)$ and the closed-loop system $x_{k+1} = Ax_k + B \pi(x_k) + w_k, ~w_k \in \mathcal{W}$, we recursively define the $N$ step robust reachable set from the $\mathcal{S}$ subject to a constraint $x_k \in \mathcal{X}$ as:
\begin{equation*}
    \begin{split}
        & \mathcal{R}_{0}(\mathcal{S}, \mathcal{W}, \pi) = \mathcal{S}, \\
        & \mathcal{R}_{i+1}(\mathcal{S}, \mathcal{W}, \pi) = \mathrm{Succ}(\mathcal{R}_{i}(\mathcal{S}, \mathcal{W}, \pi),\mathcal{W}, \pi) \medcap \mathcal{X}, \\
        & i=0,1,..,N-1. 
    \end{split}
\end{equation*}
\end{definition}

\subsection{Proof of Proposition \ref{prop: SS0 is rpi}} \label{sec: proof of rpi}
We will prove the claim by induction.
Consider the case $j=0$.
Since the terminal constraint in \eqref{eq:tube ocp} is $\bar{x}_{T+1}^0 \in \mathcal{O} \ominus \mathcal{E}$, the realized state ${x}_{T+1}^0$ of the closed-loop system \eqref{eq:system} controlled by $\pi_\mathrm{Tube}(\cdot)$ is in the target set $\mathcal{O}$.
Moreover, for all $N \in \{0,\cdots,T+1\}$, $N$-th tubes $\mathcal{E}_N$ \eqref{eq: tubes} contains a $N$-step robust reachable set $\mathcal{R}_N(x_S, \mathcal{W}, \pi_\mathrm{Tube})$. 
Thus, by \cite[Proposition 1]{rosolia2021robust}, $\mathcal{SS}^{0}$ is a robust positive invariant set for the closed-loop system \eqref{eq:system} controlled by the policy $\pi^{0}_\mathrm{Q}(\cdot)$ \eqref{eq: safe policy}.

Assuming for a particular $j-1 \geq 0$, $\mathcal{SS}^{j-1}$ is a robust positive invariant set for the system \eqref{eq:system} controlled by the policy $\pi^{j-1}_\mathrm{Q}(\cdot)$.
We want to show that $\mathcal{SS}^{j}$ is a robust positive invariant set for the system \eqref{eq:system} controlled by the policy $\pi^{j}_\mathrm{Q}(\cdot)$.
From the terminal constraint of the \eqref{eq:MPC discrete} and the control policy \eqref{eq:MPC policy}, we have that,
\begin{equation} \label{eq: prop support eq 1}
    Ax_k^j + B\pi^{j\star}(x_k^j) + w \in \mathcal{SS}^{j-1}, ~\forall w \in \mathcal{W}.
\end{equation}
Similarly, for any element $x_\mathrm{exp}^j$ of $\mathbf{X}_{k,\mathrm{exp}}^j$ and $u_\mathrm{exp}^j = \pi^{j\star}(x_\mathrm{exp}^j)$ of $\mathbf{U}_{k,\mathrm{exp}}^j$ where $k \in \{0,\cdots,T^j\}$, we have that,
\begin{equation} \label{eq: prop support eq 2}
    Ax_\mathrm{exp}^j + Bu_\mathrm{exp}^j + w \in \mathcal{SS}^{j-1}, ~\forall w \in \mathcal{W}.
\end{equation}
Moreover, by the induction hypothesis and \eqref{eq: safe policy}, $\forall x \in \mathcal{SS}^{j-1}$,
\begin{equation} \label{eq: prop support eq 3}
    A\underbrace{\mathbf{X}^{j-1} \bm{\lambda}^{j-1\star}}_{x} + B\underbrace{\mathbf{U}^{j-1} \bm{\lambda}^{j-1\star}}_{\pi^{j-1}_\mathrm{Q}(x)} + w \in \mathcal{SS}^{j-1}, ~\forall w \in \mathcal{W}.
\end{equation}
The above equations \eqref{eq: prop support eq 1}, \eqref{eq: prop support eq 2} and \eqref{eq: prop support eq 3} imply that $\forall x=\mathbf{X}^{j} \bm{\lambda}^{j\star}\in\mathcal{SS}^{j}$ at episode $j$, satisfy the following equation:
\begin{equation}
    A\underbrace{\mathbf{X}^{j} \bm{\lambda}^{j\star}}_{x} + B\underbrace{\mathbf{U}^{j} \bm{\lambda}^{j\star}}_{\pi^{j}_\mathrm{Q}(x)} + w \in \mathcal{SS}^{j-1}, ~\forall w \in \mathcal{W}.
\end{equation}
Also, by \eqref{eq: terminal constraint update}, $\mathcal{SS}^{j-1} \subset \mathcal{SS}^j$.
Therefore, $\forall x\in\mathcal{SS}^{j}$, $Ax + B\pi^{j}_\mathrm{Q}(x) + w\in\mathcal{SS}^{j-1} \subset \mathcal{SS}^{j}, ~\forall w \in \mathcal{W}$. 


\begin{IEEEbiography}[{\includegraphics[width=1in,height=1.25in,clip,keepaspectratio]{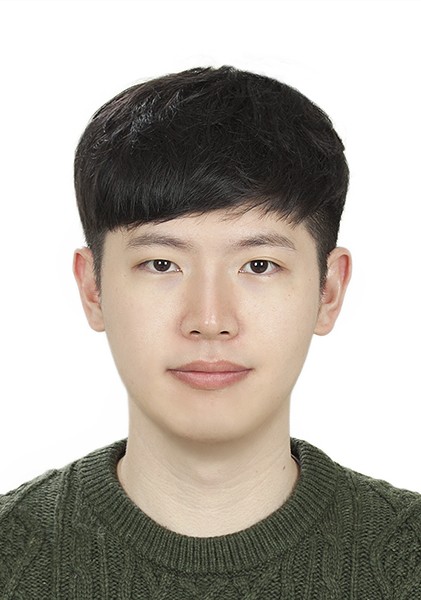}}]{Eunhyek Joa} (Member, IEEE)  received the B.S. and M.S degrees in mechanical engineering from Seoul National University, Seoul, Korea, in 2014 and 2016, respectively. He is working toward a Ph.D. in mechanical engineering at the University of California at Berkeley, sponsored by the Korean Government Scholarship Program.
His research interests include data-driven control, constrained optimal control, and model predictive control and their applications to advanced automotive control.
\end{IEEEbiography}

\begin{IEEEbiography}[{\includegraphics[width=1in,height=1.25in,clip,keepaspectratio]{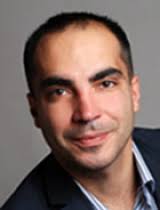}}]{Francesco Borrelli} (Fellow, IEEE) received the Laurea degree in computer science engineering from the University of Naples Federico II, Naples, Italy, in 1998, and the Ph.D. degree from ETH Zürich, Zürich, Switzerland, in 2002.
He is a Professor with the Department of Mechanical Engineering, University of California at Berkeley. He is the author of more than 100 publications in the field of predictive control and the book Constrained Optimal Control of Linear and Hybrid Systems (Springer Verlag). His research interests include constrained optimal control, model predictive control, and its application to automotive control and energy-efficient building operation. 
\end{IEEEbiography}

\end{document}